\newcommand{\x}{\boldsymbol{x}}
\newcommand{\zd}{\,\mathrm{d}}
\newcommand{\abs}[1]{\left|#1\right|}
\newcommand{\bra}[1]{\left(#1\right)}
\newcommand{\brab}[1]{\big(#1\big)}
\newcommand{\braB}[1]{\Big(#1\Big)}
\newcommand{\brabg}[1]{\bigg(#1\bigg)}
\newcommand{\kbrab}[1]{\big[#1\big]}
\newcommand{\kbraB}[1]{\Big[#1\Big]}
\newcommand{\kbrabg}[1]{\bigg[#1\bigg]}
\newcommand{\myinner}[1]{\left\langle#1\right\rangle}
\newcommand{\myinnerb}[1]{\big\langle#1\big\rangle}
\newcommand{\myinnerB}[1]{\Big\langle#1\Big\rangle}
\newcommand{\myinnerbg}[1]{\bigg\langle#1\bigg\rangle}
\newcommand{\mynorm}[1]{\left\|#1\right\|}
\newcommand{\mynormb}[1]{\big\|#1\big\|}
\newcommand{\mynormB}[1]{\Big\|#1\Big\|}
\newtheorem{theorem}{Theorem}[section]
\newtheorem{proposition}{Proposition}[section]
\newtheorem{lemma}[proposition]{Lemma}
\newtheorem{example}[proposition]{Example}
\theoremstyle{remark}
\newtheorem{remark}[proposition]{Remark}
\numberwithin{equation}{section}
\title[]{High-Order and Energy-Stable Implicit-Explicit Relaxation Runge-Kutta Schemes for Gradient Flows}
\author[]{Yuxiu Cheng}
\author[]{Kun Wang}
\author[]{Kai Yang}
\thanks{College of Mathematics and Statistics, Chongqing University, Chongqing 401331, P.R. China}
\keywords{Gradient flows, Phase-field model, Implicit-explicit RRK schemes, High-order accuracy, Energy stability}
\begin{document}
	
	\maketitle
	
	\begin{abstract}
		In this paper, we propose a class of high-order and energy-stable implicit-explicit relaxation Runge-Kutta (IMEX RRK) schemes for solving the phase-field gradient flow models. By incorporating the scalar auxiliary variable (SAV) method, the original equations are reformulated into equivalent forms, and the modified energy is introduced. Then, based on the reformulated equations, we propose a kind of IMEX RRK methods, which are rigorously proved to preserve the energy dissipation law and achieve high-order accuracy for both Allen-Cahn and Cahn-Hilliard equations. Numerical experiments are conducted to validate the theoretical results, including the accuracy of the approximate solution and the efficiency of the proposed scheme. Furthermore, the schemes are extended to multi-component gradient flows, with the vector-valued Allen-Cahn equations serving as a representative example.
	\end{abstract}
	
	\section{Introduction}
	Gradient flow models describe the evolutionary process of a system transitioning toward thermodynamic equilibrium, providing a theoretical framework for studying the system's dynamics and related processes. Consequently, these models are widely applied across numerous scientific and engineering fields to simulate various complex phenomena. We consider the free energy system as follows:
	\begin{equation}\label{free_energy}
		{E}[u] = \frac{1}{2} \myinner{u, \mathcal{L} u} + {E}_1[u],
	\end{equation}
	where $\myinner{\cdot ,\cdot}$ denotes the $L^2$ inner product, $\mathcal{L}$ is a linear operator and ${E}_1[u]$ is a nonlinear functional. Then, the general equation of the gradient flow associated with the free energy \eqref{free_energy} can be expressed as
	\begin{equation}\label{gradient_flow}
		\begin{cases}
			u_t = \mathcal{G} \mu, \\
			\mu = \dfrac{\delta {E}}{\delta u},
		\end{cases}
	\end{equation}
	where $\mathcal{G}$ is a non-positive operator, which plays a key role in determining how energy dissipates in the system.
	Here, we focus on two widely used forms of $\mathcal{G}$ in phase-field models that share the same free energy:
	\begin{equation}\label{ac_ch_energy}
		E[u] = \int_{\Omega} \frac{\epsilon^2}{2} \abs{\nabla u}^2+ F(u) \, \zd \x,
	\end{equation}
	namely the Allen-Cahn equation ($\mathcal{G}=-\mathcal{I}$) and the Cahn-Hilliard equation ($\mathcal{G}=\Delta$). These models satisfy the energy dissipation law, i.e., $E[u(t_{n+1})] \leq E[u(t_{n})]$ for $ t_n \leq t_{n+1} $.
	
	In numerical simulation, preserving the energy dissipation law in numerical schemes is crucial for long-time simulations. Refer to the up-to-date literature review on this subject, where various approaches have been developed to achieve energy stability in phase field models. To ensure the energy stability and reduce the computational cost, Eyre proposed the convex splitting method \cite{Eyre1998Unconditionally}, which decomposes the free energy functional into the convex part and the non-convex part, treating them implicitly and explicitly, respectively. Building on this foundational work, Shen et al. \cite{Shen2012Second} extended the approach by developing an energy-stable second-order convex splitting scheme for solving the gradient flow equation with Ehrlich-Schwoebel type energy. Moreover, Feng et al. \cite{Feng2013Stabilized} proposed the energy stability of the numerical method through stabilization technology, which combines Crank-Nicolson method and Adams-Bashforth method for explicit processing of nonlinear terms. For a detailed error analysis of the stabilized semi-implicit method, see \cite{Yang2009Error}. In a parallel development, Cox and Matthews \cite{Matthews2002Exponential} proposed the exponential time differencing (ETD) method, which employs exponential integration techniques for the linear part and explicit treatment for the nonlinear part, thereby ensuring stability and improving computational effciency. Further advancements were made by Kassam and Trefethen \cite{Kassam2005Fourth}, who optimized the ETD method, significantly enhancing its performance in phase field models. And Du et al. \cite{Du2019Maximum} studied an energy-stable ETD scheme that preserves the maximum principle for Allen-Cahn equation. Another significant contribution came from Yang \cite{Yang2016Linear}, who proposed the invariant energy quadratization (IEQ) method. This approach quadratizes the free energy functional through auxiliary variables, enabling the design of an unconditionally energy-stable numerical scheme. Expanding on this research, Shen et al. \cite{Shen2018Convergence,Shen2018scalar} later introduced the scalar auxiliary variable (SAV) method, which simplifies the implementation of the IEQ method by using scalar auxiliary variables while preserving unconditional energy stability. Among these, the SAV approach has gained popularity due to its simplicity and ability to maintain unconditional energy stability. Further details, please refer to \cite{Badia2011Finite,Shen2017Numerical} for studies on coupling models, \cite{Li2019Energy,Li2019Convergence,Wang2011energy} for convergence analysis, and \cite{Guill2013linear,Li2016Characterizing,ShenYang2010Numerical,Wang2018efficient,Yang2017Numerical} for additional related works.

	Besides the energy stability, achieving high-order accuracy is equally important for effcient and accurate simulations. Traditional first-order methods often suffer from excessive numerical dissipation, limiting their applicability. To address this problem, high order methods such as Runge–Kutta (RK) methods \cite{Fu2024Higher,Fu2022Energy,Tang2022Arbitrarily,Yang2022Arbitrarily} and backward differentiation formula (BDF) methods \cite{Cheng2019BDF,Hao2021third,ZhangQiao2012adaptive} have been studied to improve the numerical accuracy. Especially, the RK methods achieve arbitrary high-order accuracy through multi-stage calculations and weighted averages. However the traditional explicit RK methods exhibit poor stability when applied to stiff systems, whereas the implicit RK methods and the BDF methods require higher computational costs from solving the nonlinear systems.
	To balance the accuracy and the effciency, the implicit-explicit (IMEX) method \cite{Ascher1997Implicit,Fu2024Energy,Li2017second} was considered, in which the equation was divided into stiff and non-stiff parts, treating them implicitly and explicitly. Building on this approach, Song \cite{Song2016SSP} proposed a Runge-Kutta method that combines SSP and IMEX techniques to solve the Cahn-Hilliard equation, and proved the unconditional energy stability of the lower-order methods. More recently, Kim et al. \cite{Kim2023Domains} proposed a linearized and mass-conservative IMEX RK finite difference scheme for solving the Cahn-Hilliard equation in arbitrary geometric domains, enabling the simulation of complex spatial geometries. However, higher-order time discretization methods may complicate energy-stable conditions ($E[u(t_{n+1})] \leq E[u(t_{n})]$) in gradient flow problems, making it challenging to combine high accuracy with energy stability. Recently, the relaxation techniques \cite{LiLi2023Relaxation,LiLi2024Multiple,LiLiZhang2023Implicit,Li2023Linearly} have been developed to achieve both high-order and energy-stable. Among these methods, relaxation Runge–Kutta (RRK) methods have emerged as a powerful tool for preserving the energy dissipation law in high-order schemes. By introducing a relaxation parameter, the RRK methods adjust the time step to ensure energy stability without sacrificing accuracy. 
	
	Based on this inspiration, we extend this method to the phase-field gradient flow models and propose 
	a class of high-order and energy-stable IMEX RRK scheme with the SAV formulation.
	In particular, the diagonal IMEX RK method is used to obtain the numerical approximation of the internal stage and a relaxation coefficient is introduced in the step update to ensure the modified energy dissipation. 
	Here, we use the SAV method to stabilize nonlinear terms avoiding numerical instability caused by direct handling, employ the IMEX RK method to balance high-order accuracy with computational efficiency, and utilize the relaxation technique ensures the modified energy stability.

	The paper is organized as follows. In Section 2, we propose a class of IMEX RRK schemes based on equivalent SAV form and prove these schemes preserve the modified energy dissipation law. In Section 3, we show the order of convergence for the proposed methods by estimating the relaxation parameters. Then, we validate the theoretical analyses through a series of numerical experiments in Section 4. Additionally, we extend the IMEX RRK schemes to multi-component gradient flows and further validate the theoretical results using numerical examples of the vector-valued Allen-Cahn equation. Finally, Section 5 concludes the paper.

	\section{IMEX RRK-SAV schemes}
	
	\subsection{SAV formulation}
	First, we recall the SAV method. Define a scalar function $r(t)$ as
	\begin{equation}
		r(t) := \sqrt{\int_{\Omega} F(u(\x, t)) \, \zd \x + C_0}, \quad t \geq 0,
	\end{equation}
	where $C_0$ is a non-negative constant ensuring that $r(t)$ remains real. 
	Then, from \cite{Shen2018scalar}, the gradient flow \eqref{gradient_flow} and energy functional \eqref{ac_ch_energy} are rewritten as
	\begin{equation}\label{ac_ch_sav}
		\begin{cases}
			u_t = \mathcal{G} \brabg{-\epsilon^2 \Delta u + \dfrac{r}{\sqrt{\int_{\Omega} F(u)\, \zd \x + C_0}} F^{\prime}(u)}, \\
			r_t = \dfrac{1}{2\sqrt{\int_{\Omega} F(u)\, \zd \x + C_0}} \myinner{F^{\prime}(u), u_t},
		\end{cases}
	\end{equation}
	and
	\begin{equation}\label{energy_sav}
		E[u,r] = \frac{\epsilon^2}{2} \mynormb{\nabla u}^2 + r^2 - C_0,
	\end{equation}
	where $\mynormb{\cdot}$ denotes the corresponding norm in $L^2$ space. 
	We define \eqref{energy_sav} to be the modified energy.
	For simplicity, we consider the problem with periodic boundary conditions on $(\x,t) \in \Omega \times [0,T_0]$, where $\Omega$ is a bounded rectangular domain, $T_0$ is a finite time, and the initial data is $u(\x,0) = u_0(\x)$.

	Without sacrificing generality, we will discuss the following simplified form:
	\begin{equation}\label{equation}
		\begin{cases}
			u_t = L(u) + N(u, r), \\
			r_t = \tilde{N}(u, r),
		\end{cases}
	\end{equation}
	where $L(u) = \mathcal{G} \bra{-\epsilon^2 \Delta u}$ represents the linear part, $N(u,r) = \mathcal{G} \braB{\frac{r}{\sqrt{\int_{\Omega} F(u)\, \zd \x + C_0}} F^{\prime}(u)}$ and $\tilde N(u,r) = \frac{1}{2\sqrt{\int_{\Omega} F(u)\, \zd \x + C_0}} \myinnerb{F^{\prime}(u), u_t}$ represent the nonlinear parts. 

	Before building the format, we begin by reviewing diagonally implicit-explicit Runge-Kutta (IMEX RK) methods with the SAV formulation and analyzing the stability properties of the modified energy.
	Then, we discuss the construction of diagonally implicit-explicit relaxation Runge-Kutta schemes 
	and investigate their energy dissipation law.
	
	\subsection{Diagonally IMEX RK schemes}
	Let $0 = t_0 < t_1 < \cdots < t_{M-1} < t_M = T_0$ be a partition of the interval $[0,T_0]$, where $M$ is a positive integer. 
	Consider a step within
	$[t_n,t_{n+1}] (0 \le n \le M-1)$ with stepsize $\tau$, and let $u^n$, $r^n$, $u^{n+1}$, $r^{n+1}$ 
	represent the numerical approximations of $u(t_n)$, $r(t_n)$, $u(t_{n+1})$, $r(t_{n+1})$, respectively. 
	The $s$-stage diagonally IMEX RK method for \eqref{equation} is then:
	
	\begin{equation}\label{imex rk-sav}
		\begin{cases}
			U_{ni} = u^n + \tau \sum\limits_{j=1}^i a_{ij}L_{nj} + \tau \sum\limits_{j=1}^{i-1} \bar a_{ij} N_{nj},\\
			R_{ni} = r^n + \tau \sum\limits_{j=1}^{i-1} \bar a_{ij}\tilde{N}_{nj}, \quad i=1,\cdots,s,\\
			u^{n+1} = u^n + \tau \sum\limits_{i=1}^s b_i L_{ni} + \tau \sum\limits_{i=1}^s \bar b_i N_{ni},\\
			r^{n+1} = r^n + \tau \sum\limits_{i=1}^s \bar b_i\tilde{N}_{ni},
		\end{cases}
	\end{equation}
	where $L_{nj}=L(U_{nj})$,\, $N_{nj}=N(U_{nj},R_{nj})$ and $\tilde{N}_{nj}=\tilde{N}(U_{nj},R_{nj})$,\, $j=1,\cdots,s$. With the following notations
	\begin{align*}
		&A = (a_{ij})_{s \times s}, \quad a_{ij} = 0 \quad \text{for} \quad j > i, \\
		&\bar{A} = (\bar{a}_{ij})_{s \times s}, \quad \bar{a}_{ij} = 0 \quad \text{for} \quad j \geq i, \\
		&b = (b_1, \dots, b_s)^T, \quad \bar{b} = (\bar{b}_1, \dots, \bar{b}_s)^T, \\
		&c = (c_1, \dots, c_s)^T, \quad \bar{c} = (\bar{c}_1, \dots, \bar{c}_s)^T.
	\end{align*}
	A double Butcher tableau can be used to describe the $s$-stage IMEX RK method \eqref{imex rk-sav} as follows,
	\[
	\begin{array}
		{c|c}
		c&
		A\\
		\hline
		& b^T
	\end{array}
	\qquad 
	\begin{array}
		{c|c}
		\bar c&
		\bar A\\
		\hline
		& \bar b^T
	\end{array},
	\]
	where $c_i=\sum_{j=1}^s a_{ij}$, $\bar c_i=\sum_{j=1}^s \bar a_{ij}$ and $\sum_{i=1}^s b_{i} = \sum_{i=1}^s \bar b_{i}=1$. 
	The choices of $a_{ij}$, $b_i$, and $c_i$ can be found in \cite{LiLiZhang2023Implicit}, and examples of these parameter choices are given in the Appendix.
	
	By substituting the third and fourth formulas in \eqref{imex rk-sav} into the modified energy \eqref{energy_sav}, the discrete energy at $t_{n+1}$ can be obtained
	\allowdisplaybreaks
	\begin{align*}
		& E^{n+1}[u^{n+1},r^{n+1}]=-\frac{\epsilon^2}{2} \myinner{\Delta u^{n+1}, u^{n+1}} + \bra{r^{n+1} }^2 -C_0 \\
		=&-\frac{\epsilon^2}{2} \myinnerB{\Delta \braB{u^n + \tau \sum\limits_{i=1}^s b_i L_{ni} + \tau \sum\limits_{i=1}^s \bar b_i N_{ni}} , u^n + \tau \sum\limits_{i=1}^s b_i L_{ni} + \tau \sum\limits_{i=1}^s \bar b_i N_{ni}} \\
		&+ \braB{r^n + \tau \sum\limits_{i=1}^s \bar b_i\tilde{N}_{ni}}^2 -C_0 \\
		=&E^n[u^{n},r^{n}] - \epsilon^2 \tau \sum\limits_{i=1}^s \myinnerB{U_{ni}, \Delta \bra{b_i L_{ni}+ \bar b_i N_{ni}}} +2R_{ni} \tau \sum\limits_{i=1}^s \bar b_i\tilde{N}_{ni}\\
		&- \tau \sum\limits_{i=1}^s \kbrabg{\epsilon^2 \myinnerB{u^n-U_{ni}, \Delta \bra{b_i L_{ni}+ \bar b_i N_{ni}}} -2 \bra{r^n-R_{ni}} \bar b_i\tilde{N}_{ni}} \\
		&-\tau^2 \kbrabg{\frac{\epsilon^2}{2} \myinnerB{\Delta \braB{\sum\limits_{i=1}^s  b_i L_{ni}+ \sum\limits_{i=1}^s \bar b_i N_{ni}}, \sum\limits_{i=1}^s  b_i L_{ni}+ \sum\limits_{i=1}^s \bar b_i N_{ni}} - \braB{\sum\limits_{i=1}^s \bar b_i\tilde{N}_{ni}}^2}.
	\end{align*}
	Using the first and second relations in \eqref{imex rk-sav}, i.e., $U_{ni} - u^n = \tau \sum_{j=1}^i a_{ij}L_{nj} + \tau \sum_{j=1}^{i-1} \bar a_{ij}N_{nj}$ and $R_{ni} - r^n = \tau \sum_{j=1}^{i-1} \bar a_{ij}\tilde{N}_{nj}$, we can further get from the above equation that
	\allowdisplaybreaks
	\begin{align*}
		& E^{n+1}[u^{n+1},r^{n+1}] = E^n[u^{n},r^{n}] - \epsilon^2 \tau \sum\limits_{i=1}^s \myinnerB{U_{ni}, \Delta \brab{b_i L_{ni}+ \bar b_i N_{ni}}} +2R_{ni} \tau \sum\limits_{i=1}^s \bar b_i\tilde{N}_{ni}\\
		& + \tau^2 \epsilon^2 \sum\limits_{i=1}^s \sum\limits_{j=1}^i \kbrabg{\myinnerB{a_{ij}L_{nj}, \Delta \brab{b_i L_{ni}+ \bar b_i N_{ni}}}} -\tau^2 \epsilon^2 \sum\limits_{i,j=1}^s \kbrabg{\myinnerB{\frac{1}{2}b_j L_{nj}, \Delta \brab{b_i L_{ni}+ \bar b_i N_{ni}}}}\\
		& + \tau^2 \epsilon^2 \sum\limits_{i=1}^s \sum\limits_{j=1}^{i-1} \kbrabg{\myinnerB{\bar a_{ij}N_{nj}, \Delta \brab{b_i L_{ni}+ \bar b_i N_{ni}}}} -\tau^2 \epsilon^2 \sum\limits_{i,j=1}^s \kbrabg{\myinnerB{\frac{1}{2} \bar b_j N_{nj}, \Delta \brab{b_i L_{ni}+ \bar b_i N_{ni}}}}\\
		& - \tau^2 \sum\limits_{i=1}^s \sum\limits_{j=1}^{i-1} 2(\bar a_{ij} \tilde N_{nj})(\bar b_i\tilde{N}_{ni}) + \tau^2 \sum\limits_{i,j=1}^s (\bar b_j \tilde N_{nj})(\bar b_i\tilde{N}_{ni})\\
		=& E^n[u^{n},r^{n}] - \epsilon^2 \tau \sum\limits_{i=1}^s \myinnerB{U_{ni}, \Delta \brab{b_i L_{ni}+ \bar b_i N_{ni}}} +2R_{ni} \tau \sum\limits_{i=1}^s \bar b_i\tilde{N}_{ni}\\
		& + \frac{1}{2} \tau^2 \epsilon^2 \sum\limits_{i,j=1}^{2s} \kbrab{\bra{2a_{ij}b_i-b_jb_i}\myinner{L_{nj}, \Delta L_{ni}}} + \frac{1}{2} \tau^2 \epsilon^2 \sum\limits_{i,j=1}^{2s} \kbrab{\bra{2a_{ij} \bar b_i-b_j \bar b_i}\myinner{L_{nj}, \Delta N_{ni}}}\\
		& + \frac{1}{2} \tau^2 \epsilon^2 \sum\limits_{i,j=1}^{2s} \kbrab{\bra{2\bar a_{ij}b_i-\bar b_jb_i}\myinner{N_{nj}, \Delta L_{ni}}} + \frac{1}{2} \tau^2 \epsilon^2 \sum\limits_{i,j=1}^{2s} \kbrab{\bra{2\bar a_{ij}\bar b_i-\bar b_j\bar b_i}\myinner{N_{nj}, \Delta N_{ni}}}\\
		& - \tau^2 \sum\limits_{i,j=1}^s \kbrab{\bra{2\bar a_{ij}\bar b_i-\bar b_j\bar b_i} \bra{\tilde N_{nj} \tilde N_{ni}}},
	\end{align*}
	hence,
	\begin{align}
		\begin{split}\label{Enms}
			E^{n+1}[u^{n+1},r^{n+1}] :=& E^n[u^{n},r^{n}] - \epsilon^2 \tau \sum\limits_{i=1}^s \myinnerB{U_{ni}, \Delta \brab{b_i L_{ni}+ \bar b_i N_{ni}}} +2R_{ni} \tau \sum\limits_{i=1}^s \bar b_i\tilde{N}_{ni} \\
			& - \frac{1}{2} \tau^2 \epsilon^2 \sum\limits_{i,j=1}^{2s} m_{ij} \myinnerb{\nabla Y_i, \nabla Y_j} - \tau^2 \sum\limits_{i,j=1}^{s} \tilde{s}_{ij}  \brab{\tilde{N}_{ni} \tilde{N}_{nj}}, 
		\end{split}
	\end{align}
	where
	\begin{align*}
		M = (m_{ij})_{2s \times 2s} = \begin{bmatrix}
			BA + A^T B & A^T \bar{B} + B \bar{A} \\
			\bar{B} A + \bar{A}^T B & \bar{B} \bar{A} + \bar{A}^T \bar{B}
		\end{bmatrix}
		- \begin{bmatrix}
			b b^T & b \bar{b}^T \\
			\bar{b} b^T & \bar{b} \bar{b}^T
		\end{bmatrix}, \,
		\tilde S = (\tilde{s}_{ij})_{s \times s} =\bar{B} \bar{A} + \bar{A}^T \bar{B} - \bar{b} \bar{b}^T,		
	\end{align*}
	and $B$ and $\bar B$ are diagonal matrices consisting of $b_j$ and $\bar b_j$, respectively,
	\begin{equation*}
		Y_j = \begin{cases}
			L_{nj}, & 1 \leq j \leq s, \\
			N_{n,j-s}, & s+1 \leq j \leq 2s.
		\end{cases}
	\end{equation*}

	From \cite{Li2019Energy,Tang2022Arbitrarily}, if $b_i=\bar b_i \geq 0$ for $i =1,\cdots,s$, we can similarly derive that the total of the second and third terms on the right-hand side of the relation \eqref{Enms} is non-positive, namely
	\begin{equation}\label{dissipative}
		- \epsilon^2 \tau \sum\limits_{i=1}^s \myinnerB{U_{ni}, \Delta \brab{b_i L_{ni}+ \bar b_i N_{ni}}} +2R_{ni} \tau \sum\limits_{i=1}^s \bar b_i\tilde{N}_{ni} \\
		=\, \tau \sum\limits_{i=1}^s b_i \myinnerb{\mu_{ni}, \mathcal{G} \mu_{ni}} \, \le 0,
	\end{equation}
	where $\mu_{ni}= -\epsilon^2 \Delta U_{ni} + \frac{R_{ni}}{\sqrt{\int_{\Omega} F(U_{ni})\, \zd \x + C_0}} F^{\prime}(U_{ni})$. 
	To ensure dissipation in the IMEX RK methods \eqref{imex rk-sav}, it is sufficient that the total of the fourth and fifth terms on the right-hand side of the relation \eqref{Enms} is non-positive.
	This condition can easily be achieved by using the semi-positive properties of matrices $M$ and $\tilde S$ (Refer to Definition 1 in \cite{Tang2022Arbitrarily}). 
	Moreover, the energy-dissipative property of the methods can be easily guaranteed if $M = 0$ and $S = 0$, implying that the symplectic condition must be satisfied by the implicit as well as explicit RK methods.
	However in the IMEX RK methods, satisfying the symplectic condition is quite challenging, primarily due to the explicit parts typically do not satisfy the symplectic condition.

	\subsection{IMEX RRK-SAV schemes} The explicit RK methods cannot achieve the algebraic or symplectic stability that implicit RK methods possess. 
	As a result, the IMEX RK methods \eqref{imex rk-sav} cannot guarantee the energy dissipation law.
	To address this limitation, we study the IMEX RRK methods by incorporating relaxation techniques in this subsection. Consider a single step over $[\hat{t}_n,\hat{t}_{n+1}]\,(0 \le n \le M-1)$ with stepsize $\tau$, and let $u_{\gamma}^n$,\, $r_{\gamma}^n$, \,$u_{\gamma}^{n+1}$,\, $r_{\gamma}^{n+1}$ 
	represent the numerical estimates of $u(\hat{t}_n)$, $r(\hat{t}_n)$, $u(\hat{t}_{n+1})$, $r(\hat{t}_{n+1})$, respectively. 
	We introduce a relaxation coefficient $\gamma_n$ to ensure the energy dissipation law when calculating the numerical solution $u_{\gamma}^{n+1}$ and $r_{\gamma}^{n+1}$. 
	Therefore, the $s$-stage diagonally IMEX RRK method for \eqref{equation} is written as:
	\begin{equation}\label{imex rrk-sav}
		\begin{cases}
			U_{ni} = u_{\gamma}^n + \tau \sum\limits_{j=1}^i a_{ij}L_{nj} + \tau \sum\limits_{j=1}^{i-1} \bar a_{ij}N_{nj},\\
			R_{ni} = r_{\gamma}^n + \tau \sum\limits_{j=1}^{i-1} \bar a_{ij}\tilde{N}_{nj}, \quad i=1,\cdots,s,\\
			u_{\gamma}^{n+1} = u_{\gamma}^n + \gamma_n\tau \sum\limits_{i=1}^s b_i L_{ni} + \gamma_n\tau \sum\limits_{i=1}^s \bar b_i N_{ni},\\
			r_{\gamma}^{n+1} = r_{\gamma}^n + \gamma_n\tau \sum\limits_{i=1}^s \bar b_i\tilde{N}_{ni},
		\end{cases}
	\end{equation}
	where $L_{nj}=L(U_{nj})$, $N_{nj}=N(U_{nj},R_{nj})$, $\tilde{N}_{nj}=\tilde{N}(U_{nj},R_{nj})$, $j=1,\cdots,s$. Clearly, the parameter $\gamma_n$ in \eqref{imex rrk-sav} is non-zero, otherwise $u_{\gamma}^{n+1} \not \equiv u_{\gamma}^n$. 
	Now, we show how to choose the relaxation coefficient $\gamma_n$ from analysis.
	
	Similarly, from \eqref{energy_sav} and \eqref{imex rrk-sav}, the discrete energy at $t_{n+1}$ is
	\begin{align}\label{En_gamma}
		\begin{split}
			&E^{n+1}[u_{\gamma}^{n+1},r_{\gamma}^{n+1}] = -\frac{\epsilon^2}{2} \myinner{\Delta u_{\gamma}^{n+1}, u_{\gamma}^{n+1}} + \bra{r_{\gamma}^{n+1} }^2 -C_0 \\
			=& E^n[u_{\gamma}^{n},r_{\gamma}^{n}] - \epsilon^2 \gamma_n \tau \sum\limits_{i=1}^s \myinnerb{U_{ni}, \Delta(b_i L_{ni}+ \bar b_i N_{ni})} +2R_{ni}\gamma_n \tau \sum\limits_{i=1}^s \bar b_i\tilde{N}_{ni}\\
			&-\gamma_n \tau \sum\limits_{i=1}^s \kbraB{\epsilon^2 \myinnerb{u_{\gamma}^n-U_{ni}, \Delta(b_i L_{ni}+ \bar b_i N_{ni})} -2(r_{\gamma}^n-R_{ni}) \bar b_i\tilde{N}_{ni}}\\
			&-\gamma_n^2 \tau^2 \kbraB{\frac{\epsilon^2}{2} \myinnerB{\Delta \braB{\sum\limits_{i=1}^s  b_i L_{ni}+ \sum\limits_{i=1}^s \bar b_i N_{ni}}, \sum\limits_{i=1}^s  b_i L_{ni}+ \sum\limits_{i=1}^s \bar b_i N_{ni}} - \braB{\sum\limits_{i=1}^s \bar b_i\tilde{N}_{ni}}^2}.
		\end{split}
	\end{align}

	Since $u_{\gamma}^{n+1}$ and $r_{\gamma}^{n+1}$ approximate the values of $u$ and $r$ at $\hat{t}_{n} + \gamma_n \tau$, respectively, it is crucial that $\gamma_n > 0$ must be satisfied at every step.
	For $\gamma_n > 0$ and the same parameters in \eqref{dissipative}, we also obtain the dissipation term is non-positive, namely
	\begin{align}\label{dissipation_gamma}
		\begin{split}
			&-\epsilon^2 \gamma_n \tau \sum\limits_{i=1}^s \myinnerb{U_{ni}, \Delta(b_i L_{ni}+ \bar b_i N_{ni})} +2R_{ni}\gamma_n \tau \sum\limits_{i=1}^s \bar b_i\tilde{N}_{ni}\\
			=& \gamma_n \tau \sum\limits_{i=1}^s b_i\kbrabg{-\epsilon^2 \myinnerb{U_{ni}, \Delta(L_{ni}+ N_{ni})} +2R_{ni}\tilde{N}_{ni}} \quad \le 0.
		\end{split}
	\end{align}
	
	Given that the total obtained by summing second and third terms on the right-hand side of the last relation in \eqref{En_gamma} is non-positive (see \eqref{dissipation_gamma}), we can choose the relaxation coeffcient $\gamma_n$ such that the sum of the last two terms vanishes to ensure the energy stability for the IMEX RRK schemes \eqref{imex rrk-sav}, namely
	\begin{multline*}
		-\gamma_n \tau \sum\limits_{i=1}^s \kbrabg{\epsilon^2 \myinnerB{u_{\gamma}^n-U_{ni}, \Delta\brab{b_i L_{ni}+ \bar b_i N_{ni}}} -2\brab{r_{\gamma}^n-R_{ni}} \bar b_i\tilde{N}_{ni}} \\
		-\gamma_n^2 \tau^2 \kbrabg{\frac{\epsilon^2}{2} \myinnerB{\Delta \braB{\sum\limits_{i=1}^s  b_i L_{ni}+ \sum\limits_{i=1}^s \bar b_i N_{ni}}, \sum\limits_{i=1}^s  b_i L_{ni}+ \sum\limits_{i=1}^s \bar b_i N_{ni}} - \braB{\sum\limits_{i=1}^s \bar b_i\tilde{N}_{ni}}^2} = 0,
	\end{multline*}
	by applying periodic /homogeneous dirichlet/neumann boundary conditions and integration by parts, one can easily see that
	\begin{multline*}
		-\gamma_n \tau \sum\limits_{i=1}^s \kbrabg{\epsilon^2 \myinnerB{u_{\gamma}^n-U_{ni}, \Delta\brab{b_i L_{ni}+ \bar b_i N_{ni}}} -2\brab{r_{\gamma}^n-R_{ni}} \bar b_i\tilde{N}_{ni}} \\
		+\gamma_n^2 \tau^2 \kbrabg{\frac{\epsilon^2}{2} \mynormB{\nabla \braB{\sum\limits_{i=1}^s  b_i L_{ni}+ \sum\limits_{i=1}^s \bar b_i N_{ni}}}^2 + \braB{\sum\limits_{i=1}^s \bar b_i\tilde{N}_{ni}}^2} = 0.
	\end{multline*}
	If 
	\begin{align}\label{denominator_ne}
		\frac{\epsilon^2}{2} \mynormB{\nabla \braB{\sum\limits_{i=1}^s  b_i L_{ni}+ \sum\limits_{i=1}^s \bar b_i N_{ni}}}^2 + \braB{\sum\limits_{i=1}^s \bar b_i\tilde{N}_{ni}}^2 \ne 0
	\end{align}
	holds, then
	\begin{align*}
		\gamma_n = \frac{\sum\limits_{i=1}^s \kbrabg{\epsilon^2 \myinnerB{u_{\gamma}^n-U_{ni}, \Delta\bra{b_i L_{ni}+ \bar b_i N_{ni}}} -2(r_{\gamma}^n-R_{ni}) \bar b_i\tilde{N}_{ni}}} {\tau \kbrabg{\frac{\epsilon^2}{2} \mynormB{\nabla \braB{\sum\limits_{i=1}^s  b_i L_{ni}+ \sum\limits_{i=1}^s \bar b_i N_{ni}}}^2 + \braB{\sum\limits_{i=1}^s \bar b_i\tilde{N}_{ni}}^2}}\,;
	\end{align*}
	otherwise, we directly set $\gamma_n=1$, representing the standard symplectic IMEX RK scheme, which, to our knowledge, we didn't know such choice of $a_{ij}$, $\bar a_{ij}$, $b_i$, $\bar b_i$,$c_i$ and $\bar c_i$. Therefore,
	\begin{align}\label{Def_gamma_n}
		\gamma_n =
		\begin{cases}
			\frac{\sum\limits_{i=1}^s \kbrabg{\epsilon^2 \myinnerB{u_{\gamma}^n-U_{ni}, \Delta\bra{b_i L_{ni}+ \bar b_i N_{ni}}} -2(r_{\gamma}^n-R_{ni}) \bar b_i\tilde{N}_{ni}}} {\tau \kbrabg{\frac{\epsilon^2}{2} \mynormB{\nabla \braB{\sum\limits_{i=1}^s  b_i L_{ni}+ \sum\limits_{i=1}^s \bar b_i N_{ni}}}^2 + \braB{\sum\limits_{i=1}^s \bar b_i\tilde{N}_{ni}}^2}}, \, &\text{if } \eqref{denominator_ne} \text{ holds},\\
			1, \, &\text{else}.
		\end{cases}
	\end{align}

	In the IMEX RRK schemes \eqref{imex rrk-sav}, there are two distinct interpretations of the numerical solutions $u_{\gamma}^{n+1}$ and $r_{\gamma}^{n+1}$ at $\hat{t}_{n+1}$ (cf.\cite{LiLiZhang2023Implicit}).
	\begin{itemize}
		\item \textbf{Incremental Direction Technique (IDT):} 
		$u_{\gamma}^{n+1}$ and $r_{\gamma}^{n+1}$ approximate the values of $u$ and $r$ at $\hat{t}_{n} + \tau$, where $\hat{t}_{n} = t_n$ for $n \geq 0$.
		
		\item \textbf{Relaxation Technique (RT):} 
		$u_{\gamma}^{n+1}$ and $r_{\gamma}^{n+1}$ approximate the values of $u$ and $r$ at $\hat{t}_{n} + \gamma_n \tau$, where $\hat{t}_{n} \ne t_n$ for $n > 0$.
		
	\end{itemize}
	
	It is important to note that varying interpretations of the numerical solutions $u_{\gamma}^{n+1}$ and $r_{\gamma}^{n+1}$ result in different convergence; further details can be found in Section 3. 
	Additionally, IMEX RRK$(s,p)$ denotes an $s$-stage, $p$-th order implicit-explicit relaxation RK method,
	with its coefficients listed in the appendix.
	
	\subsection{Energy dissipation law}
	Next, we discuss the modified energy's dissipation property of the IMEX RRK methods \eqref{imex rrk-sav} and give the following theorem.
	\begin{theorem}
		If $\gamma_n > 0$ and $b_j = \bar b_j  \ge 0$ for $j=1,\cdots,s$, then, IMEX RRK methods \eqref{imex rrk-sav} preserve the discrete energy decay property, namely
		\begin{align}\label{T:IMEXRRK_E}
			E^{n+1}[u_{\gamma}^{n+1},r_{\gamma}^{n+1}] \le E^n[u_{\gamma}^{n},r_{\gamma}^{n}].
		\end{align}
	\end{theorem}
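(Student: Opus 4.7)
The plan is to start from the expansion of $E^{n+1}[u_\gamma^{n+1},r_\gamma^{n+1}]$ already carried out in \eqref{En_gamma}, which decomposes the new energy into $E^n$ plus three contributions: a ``dissipation'' piece linear in $\gamma_n\tau$, a ``defect'' piece linear in $\gamma_n\tau$ involving the stage differences $u_\gamma^n-U_{ni}$ and $r_\gamma^n-R_{ni}$, and a quadratic piece of order $\gamma_n^2\tau^2$. The goal is to show that, under the stated hypotheses, the defect and quadratic pieces cancel by the very construction of $\gamma_n$, while the dissipation piece is sign-definite.

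First I would use the periodic boundary conditions (or equivalently homogeneous Dirichlet/Neumann) to apply integration by parts in the $\gamma_n^2\tau^2$ block, converting the Laplacian pairing into $-\mynormb{\nabla(\sum_i b_i L_{ni}+\sum_i\bar b_i N_{ni})}^2$. After this rewrite the sum of the last two terms on the right of \eqref{En_gamma} is precisely the expression whose vanishing defines $\gamma_n$ in \eqref{Def_gamma_n}. Hence, whenever the denominator condition \eqref{denominator_ne} holds, $\gamma_n$ is selected exactly so that these two terms cancel identically. In the degenerate branch where \eqref{denominator_ne} fails, the vanishing of the denominator on a connected domain forces $\sum_i b_i L_{ni}+\sum_i\bar b_i N_{ni}$ to be a spatial constant and $\sum_i\bar b_i\tilde N_{ni}=0$, so directly from the scheme $\nabla u_\gamma^{n+1}=\nabla u_\gamma^n$ and $r_\gamma^{n+1}=r_\gamma^n$, giving $E^{n+1}=E^n$ without any need to manipulate \eqref{En_gamma}.

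In the generic case, once the defect and quadratic pieces are gone only the dissipation piece survives, and the same manipulation that produced \eqref{dissipation_gamma} rewrites it as $\gamma_n\tau\sum_{i=1}^s b_i\myinnerb{\mu_{ni},\mathcal{G}\mu_{ni}}$. Since $b_i=\bar b_i\ge 0$, $\gamma_n>0$, and $\mathcal{G}$ is non-positive, each summand is $\le 0$, and therefore $E^{n+1}\le E^n$.

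The main obstacle is not any single deep step but careful bookkeeping: one must verify that the defect-plus-quadratic expression appearing in \eqref{En_gamma} coincides, after integration by parts, with the exact numerator and denominator used in \eqref{Def_gamma_n}, and one must handle the degenerate branch honestly rather than ignoring it. Both of these reduce to routine assembly of identities already present in the excerpt, so the overall argument is essentially a substitution-and-cancellation proof rather than a new estimate.
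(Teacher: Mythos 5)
Your proposal is correct and follows essentially the same route as the paper: split into the degenerate branch of \eqref{Def_gamma_n} (where the update leaves $\nabla u_\gamma$ and $r_\gamma$ unchanged, so $E^{n+1}=E^n$) and the generic branch, where $\gamma_n$ is chosen precisely to cancel the defect and quadratic terms in \eqref{En_gamma}, leaving only the dissipation term, which is non-positive by \eqref{dissipation_gamma} since $\gamma_n>0$, $b_i=\bar b_i\ge 0$, and $\mathcal{G}$ is non-positive. The only cosmetic difference is that you justify the degenerate case via $\nabla u_\gamma^{n+1}=\nabla u_\gamma^n$ and $r_\gamma^{n+1}=r_\gamma^n$, while the paper checks directly that all the extra terms in the energy expansion vanish; the two arguments are equivalent.
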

	\begin{proof}
		If $\frac{\epsilon^2}{2} \mynormB{\nabla \braB{\sum\limits_{i=1}^s  b_i L_{ni}+ \sum\limits_{i=1}^s \bar b_i N_{ni}}}^2 + \braB{\sum\limits_{i=1}^s \bar b_i\tilde{N}_{ni}}^2=0$ holds, it is valid that
		\begin{align*}
			\begin{split}
				E^{n+1}[u_{\gamma}^{n+1},r_{\gamma}^{n+1}] &= E^n[u_{\gamma}^{n},r_{\gamma}^{n}] +\gamma_n \tau \epsilon^2 \myinnerbg{\nabla u_{\gamma}^n, \nabla \braB{\sum\limits_{i=1}^s b_i L_{ni}+ \sum\limits_{i=1}^s \bar b_i N_{ni}}}  + 2r_{\gamma}^n \gamma_n \tau \braB{\sum\limits_{i=1}^s \bar b_i\tilde{N}_{ni}}\\
				&\quad-\gamma_n^2 \tau^2 \kbraB{\frac{\epsilon^2}{2} \mynormB{\nabla \braB{\sum\limits_{i=1}^s  b_i L_{ni}+ \sum\limits_{i=1}^s \bar b_i N_{ni}}}^2 + \braB{\sum\limits_{i=1}^s \bar b_i\tilde{N}_{ni}}^2}\\
				&= E^n[u_{\gamma}^{n},r_{\gamma}^{n}],
			\end{split}
		\end{align*}
		that is, the energy dissipation law is automatically satisfied.
		
		In other cases, noting the assumptions that $\gamma_n > 0$ and $b_i = \bar{b}_i \geq 0$ for $i = 1, \cdots, s$, we have
		
		\begin{align*}
			E^{n+1}[u_{\gamma}^{n+1},r_{\gamma}^{n+1}] &= E^n[u_{\gamma}^{n},r_{\gamma}^{n}] - \epsilon^2 \gamma_n \tau \sum\limits_{i=1}^s \myinnerB{U_{ni}, \Delta \brab{b_i L_{ni}+ \bar b_i N_{ni}}} +2R_{ni}\gamma_n \tau \sum\limits_{i=1}^s \bar b_i\tilde{N}_{ni} \\
			&\le E^n[u_{\gamma}^{n},r_{\gamma}^{n}].
		\end{align*}
		The proof of \eqref{T:IMEXRRK_E} is completed due to the inequality \eqref{dissipation_gamma}.
		
	\end{proof}
	
	\section{Error analysis}
	This section is devoted to the error analysis of the IMEX RRK methods \eqref{imex rrk-sav}. 
	Our investigation starts with estimating the relaxation coeffcient $\gamma_n$, which is essential for the subsequent derivation.
	Based on this estimation, we then study the precision of our approximation solution.
	
	\subsection{Estimate of the relaxation coefficient}\label{Estimate:gamma} 
	Here, we focus on a step over $[\hat{t}_n,\hat{t}_{n+1}]$ with stepsize $\tau$. Note that $\dfrac{\epsilon^2}{2} \mynormB{\nabla \braB{\sum\limits_{i=1}^s  b_i L_{ni}+ \sum\limits_{i=1}^s \bar b_i N_{ni}}}^2 + \braB{\sum\limits_{i=1}^s \bar b_i\tilde{N}_{ni}}^2=0$ when $\gamma_n=1$, which corresponds to the standard IMEX RK method. Now we will concentrate on the case where the denominator is non-zero. Let 
	\begin{align}\label{Sn(gamma)}
		\begin{split}
			G_n(\gamma_n)=&E^{n+1}[u_{\gamma}^{n+1},r_{\gamma}^{n+1}]-E^n[u_{\gamma}^{n},r_{\gamma}^{n}]\\ 
			&- \kbrabg{-\epsilon^2 \gamma_n \tau \sum\limits_{i=1}^s \myinnerB{U_{ni}, \Delta \brab{b_i L_{ni}+ \bar b_i N_{ni}}} +2R_{ni}\gamma_n \tau \sum\limits_{i=1}^s \bar b_i\tilde{N}_{ni}}.
		\end{split}
	\end{align}
	Then, $\gamma_n$ in \eqref{Def_gamma_n} is the non-zero root of $G_n(\gamma)$. Regarding the function $G_n(\gamma)$, there holds the following Lemmas.

	\begin{lemma}\label{L:Sn(1)}
		Assume that the IMEX RK method \eqref{imex rk-sav} has $p$-th order accuracy, then
		$$G_n(1)= \mathcal{O}(\tau^{p+1}),$$	
		for small enough time step size $\tau$.
	\end{lemma}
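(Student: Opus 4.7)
The plan is to interpret $G_n(1)$ as a local consistency error for the discrete energy identity and estimate it via a standard Runge--Kutta truncation argument. Since setting $\gamma_n=1$ collapses the relaxation scheme \eqref{imex rrk-sav} into the standard IMEX RK scheme \eqref{imex rk-sav}, we have $u_\gamma^{n+1}=u^{n+1}$ and $r_\gamma^{n+1}=r^{n+1}$, so $G_n(1)$ is precisely the mismatch between the energy increment produced by the $p$-th order IMEX RK scheme and the stage-based dissipation sum bracketed in \eqref{Sn(gamma)}. Adopting the local-truncation setup $u_\gamma^n=u(\hat t_n)$ and $r_\gamma^n=r(\hat t_n)$, I would show that both of these quantities approximate the true energy change $E[u(\hat t_{n+1}),r(\hat t_{n+1})]-E[u(\hat t_n),r(\hat t_n)]$ up to $\mathcal{O}(\tau^{p+1})$, so their difference is $\mathcal{O}(\tau^{p+1})$.

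For the scheme side, the assumed $p$-th order accuracy yields $u^{n+1}-u(\hat t_{n+1})=\mathcal{O}(\tau^{p+1})$ and $r^{n+1}-r(\hat t_{n+1})=\mathcal{O}(\tau^{p+1})$, and since the modified energy $E[u,r]=\frac{\epsilon^2}{2}\mynormb{\nabla u}^2+r^2-C_0$ is quadratic in $(u,r)$, this transfers to
\begin{equation*}
E^{n+1}[u^{n+1},r^{n+1}]-E^n[u_\gamma^n,r_\gamma^n]=E[u(\hat t_{n+1}),r(\hat t_{n+1})]-E[u(\hat t_n),r(\hat t_n)]+\mathcal{O}(\tau^{p+1}).
\end{equation*}
For the continuous side, differentiating $E$ along the flow \eqref{equation} and using periodicity gives
\begin{equation*}
\frac{d}{dt}E[u(t),r(t)]=-\epsilon^2\myinnerb{u,\Delta(L(u)+N(u,r))}+2r\,\tilde N(u,r),
\end{equation*}
so the true energy increment equals $\int_{\hat t_n}^{\hat t_{n+1}}\frac{dE}{ds}\,ds$. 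The remaining task is to show that the bracketed dissipation sum in \eqref{Sn(gamma)} approximates this integral up to $\mathcal{O}(\tau^{p+1})$. I would split it into the three component quadratures $-\epsilon^2\tau\sum_i b_i\myinnerb{U_{ni},\Delta L_{ni}}$, $-\epsilon^2\tau\sum_i \bar b_i\myinnerb{U_{ni},\Delta N_{ni}}$, and $2\tau\sum_i \bar b_i R_{ni}\tilde N_{ni}$, Taylor expand the stages $U_{ni}$, $R_{ni}$, $L_{ni}$, $N_{ni}$, $\tilde N_{ni}$ around $\hat t_n$, and apply the order-$p$ Butcher conditions for the implicit pair $(A,b)$ and the explicit pair $(\bar A,\bar b)$ so that each component reproduces its corresponding integral up to an $\mathcal{O}(\tau^{p+1})$ remainder. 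Substituting back into \eqref{Sn(gamma)} cancels the exact energy change and leaves $G_n(1)=\mathcal{O}(\tau^{p+1})$.

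The main obstacle will be this last quadrature estimate, because the IMEX pair mixes the implicit weights $b_i$ acting on $L_{ni}$ with the explicit weights $\bar b_i$ acting on $N_{ni}$ and $\tilde N_{ni}$. In the symmetric case $b_i=\bar b_i$, which is the standing hypothesis for the dissipativity theorem, the bracketed sum collapses to $\tau\sum_i b_i\bigl[-\epsilon^2\myinnerb{U_{ni},\Delta(L_{ni}+N_{ni})}+2R_{ni}\tilde N_{ni}\bigr]$, i.e.\ a single quadrature of the stage-wise energy-dissipation functional, so the standard Hairer--Wanner order-condition argument applies in one step. In the fully asymmetric setting one must instead invoke the coupled IMEX order conditions of Ascher--Ruuth--Spiteri to secure the $\mathcal{O}(\tau^{p+1})$ cancellation; the bookkeeping is routine but tedious, and I would present only the symmetric reduction in detail since it already covers the schemes used in the rest of the paper.
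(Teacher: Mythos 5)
Your decomposition is exactly the paper's: with $\gamma_n=1$ the relaxation step reduces to the plain IMEX RK step, the quadratic energy converts the $\mathcal{O}(\tau^{p+1})$ solution error into an $\mathcal{O}(\tau^{p+1})$ energy error, the exact energy increment is written as $\int_{\hat t_n}^{\hat t_{n}+\tau}\frac{d}{dt}E\,\zd t$, and everything hinges on showing that the bracketed stage sum matches this integral to $\mathcal{O}(\tau^{p+1})$. Where you diverge is in how that last quadrature estimate is established. You propose Taylor-expanding the stages and invoking the Butcher/IMEX order conditions directly, flagging it as ``routine but tedious''; the paper instead appends the running integral $\Phi(t)=\int_{\hat t_n}^{t}\bigl[-\epsilon^2\myinnert{\tilde u,\Delta(L+N)}+2\tilde r\tilde N\bigr]\zd w$ as an extra component of the ODE system (splitting its right-hand side into the implicit piece $-\epsilon^2\myinnert{\tilde u,\Delta L(\tilde u)}$ and the explicit remainder), observes that the IMEX RK method applied to the augmented system produces for $\Phi^{n+1}$ \emph{exactly} the bracketed stage sum, and then gets the $\mathcal{O}(\tau^{p+1})$ estimate for free from the method's order on the augmented system. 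This trick buys two things you should be aware of. First, it sidesteps a real pitfall in your sketch: the internal stages $U_{ni}$, $R_{ni}$ are only low-stage-order approximations of $u(\hat t_n+c_i\tau)$, $r(\hat t_n+c_i\tau)$, so $\tau\sum_i b_i\,g(U_{ni},R_{ni})$ is \emph{not} an ordinary interpolatory quadrature at the nodes $c_i$, and the phrase ``the standard Hairer--Wanner order-condition argument applies in one step'' understates what is needed --- a full B-series/tree expansion, which the augmented-system device packages automatically. Second, it handles the mixed weights ($b_i$ on the $L$-terms, $\bar b_i$ on the $N$- and $\tilde N$-terms) with no extra effort and no symmetry assumption, whereas you retreat to the case $b_i=\bar b_i$, which the lemma as stated does not assume. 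Your route can be completed, but you should either carry out the tree-based expansion honestly or adopt the augmented-system reformulation; as written, the decisive step is the one left unproved.
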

	\begin{proof}
		Take into account the initial value problem as follows
		\begin{equation}
			\begin{cases}
				\tilde{u}_t = L(\tilde{u}) + N(\tilde{u},\tilde{r}), \\
				\tilde{r}_t = \tilde{N}(\tilde{u},\tilde{r}), \quad \hat{t}_n \le t \le \hat{t}_{n+1}, \\
				\tilde{u}(\hat{t}_n) = u^n_{\gamma},\\
				\tilde{r}(\hat{t}_n) = r^n_{\gamma},
			\end{cases}
		\end{equation}
		where $u^n_{\gamma}$ and $r^n_{\gamma}$ match those defined in \eqref{imex rrk-sav}. Executing one step using the $p$-th order IMEX RK method \eqref{imex rk-sav}, we obtain $\tilde{U}_{ni}$ for $i = 1, \cdots, s$ and $\tilde{u}^{n+1}$, $\tilde{r}^{n+1}$, satisfying 
		$$\tilde{u}^{n+1} = \tilde{u}(\hat{t}_n + \tau) + \mathcal{O}(\tau^{p+1}),\, \tilde{r}^{n+1} = \tilde{r}(\hat{t}_n + \tau) + \mathcal{O}(\tau^{p+1})$$
		for small enough time step size $\tau$. Starting from $u^n_\gamma$ and $r^n_\gamma$, $U_{ni}$ for $ i=1,\cdots,s$ and $u^{n+1}_\gamma$, $r^{n+1}_\gamma$ are calculated by using the IMEX RRK method \eqref{imex rrk-sav} with $\gamma_n=1$. 
		Noting $U_{ni}=\tilde U_{ni}$ for $i=1,\cdots,s$, $u_{\gamma}^{n+1} = \tilde{u}^{n+1}$ and $r_{\gamma}^{n+1} = \tilde{r}^{n+1}$, we substitute them into \eqref{energy_sav} to obtain $E^{n+1}[u_{\gamma}^{n+1},r_{\gamma}^{n+1}] = \tilde E^{n+1}[\tilde{u}^{n+1},\tilde{r}^{n+1}]$.
		
		Thus, denote that $\tilde{L}_{ni} = L(\tilde U_{ni})$,\,\,$\tilde{N}_{ni} = N(\tilde U_{ni},R_{ni})$ and $\tilde{\tilde{N}}_{ni} = \tilde{N}(\tilde U_{ni},R_{ni})$. From the fact that $E^{n+1}[u_{\gamma}^{n+1},r_{\gamma}^{n+1}] = \tilde E^{n+1}[\tilde{u}^{n+1},\tilde{r}^{n+1}]$ and $E^n[u_{\gamma}^{n},r_{\gamma}^{n}] = \tilde{E} \kbrab{\tilde{u}(\hat t_{n}),\tilde{r}(\hat t_{n})}$, by substituting $\gamma_n = 1$ into \eqref{Sn(gamma)}, we obtain
		\begin{align}
			\begin{split}\label{Sn(1)}
				G_n(1)&= E^{n+1}[u_{\gamma}^{n+1},r_{\gamma}^{n+1}] - E^n[u_{\gamma}^{n},r_{\gamma}^{n}] \\
				&\quad- \kbraB{-\epsilon^2 \tau \sum\limits_{i=1}^s \myinnerb{U_{ni}, \Delta(b_i L_{ni}+ \bar b_i N_{ni})} +2R_{ni} \tau \sum\limits_{i=1}^s \bar b_i\tilde{N}_{ni}}\\
				&=\tilde E^{n+1}[\tilde{u}^{n+1},\tilde{r}^{n+1}] - \tilde E \kbrab{\tilde{u}(\hat t_{n}),\tilde{r}(\hat t_{n})} \\
				&\quad - \kbraB{-\epsilon^2 \tau \sum\limits_{i=1}^s \myinnerb{\tilde U_{ni}, \Delta(b_i\tilde{L}_{ni}+ \bar b_i\tilde{N}_{ni}) } +2\tilde R_{ni} \tau \sum\limits_{i=1}^s \bar b_i \tilde{\tilde{N}}_{ni}}\\
				&=\tilde E^{n+1}[\tilde{u}^{n+1},\tilde{r}^{n+1}] - \tilde E\kbrab{\tilde{u}(\hat t_{n}+\tau),\tilde{r}(\hat t_{n}+\tau)} + \tilde E\kbrab{\tilde{u}(\hat t_{n}+\tau),\tilde{r}(\hat t_{n}+\tau)} \\
				&\quad- \tilde E \kbrab{\tilde{u}(\hat t_{n}),\tilde{r}(\hat t_{n})} - \kbraB{-\epsilon^2 \tau \sum\limits_{i=1}^s \myinnerb{\tilde U_{ni}, \Delta(b_i\tilde{L}_{ni}+ \bar b_i\tilde{N}_{ni}) } +2\tilde R_{ni} \tau \sum\limits_{i=1}^s \bar b_i \tilde{\tilde{N}}_{ni}}\\
				&=\mathcal{O}(\tau^{p+1}) - \kbraB{-\epsilon^2 \tau \sum\limits_{i=1}^s \myinnerb{\tilde U_{ni}, \Delta(b_i\tilde{L}_{ni}+ \bar b_i\tilde{N}_{ni}) } +2\tilde R_{ni} \tau \sum\limits_{i=1}^s \bar b_i \tilde{\tilde{N}}_{ni}}\\
				&\quad + \int_{\hat t_n}^{\hat t_{n}+\tau} \kbrabg{-\epsilon^2 \myinnerB{\tilde u(t), \Delta \brab{L(\tilde u(t))+ N(\tilde u(t),\tilde r(t))}} + 2\tilde r(t) \,\tilde{N}(\tilde u(t),\tilde r(t))} \,\zd t.
			\end{split}
		\end{align}
		Let $\Phi(t) = \int_{\hat t_n}^{t} \kbrabg{-\epsilon^2 \myinnerB{\tilde u(w), \Delta \brab{L(\tilde u(w))+ N(\tilde u(w),\tilde r(w))}} + 2\tilde r(w) \,\tilde{N}(\tilde u(w),\tilde r(w))} \,\zd w$ 
		and construct the system as follows:
		\begin{equation}\label{equation:phi+u}
			\begin{bmatrix}
				\Phi_t \\
				\tilde{u}_t\\
				\tilde{r}_t 
			\end{bmatrix}
			=
			\begin{bmatrix}
				-\epsilon^2 \myinnerb{\tilde{u}, \Delta L(\tilde{u})} \\
				L(\tilde{u})\\
				0
			\end{bmatrix}
			+
			\begin{bmatrix}
				-\epsilon^2 \myinnerb{\tilde{u}, \Delta N(\tilde{u},\tilde{r})} + 2 \tilde{r} \tilde{N}(\tilde{u},\tilde{r}) \\
				N(\tilde{u})\\
				\tilde{N}(\tilde{u},\tilde{r})
			\end{bmatrix},
			\quad \hat{t}_n \le t \le \hat{t}_{n+1},
		\end{equation}
		with initial value
		$$
		\begin{bmatrix}
			\Phi(\hat{t}_n) \\
			\tilde{u}(\hat{t}_n)\\
			\tilde{r}(\hat{t}_n)
		\end{bmatrix}
		=
		\begin{bmatrix}
			0 \\
			u_{\gamma}^{n}\\
			r_{\gamma}^{n}
		\end{bmatrix}.
		$$
		Using the $p$-th order IMEX RK method \eqref{imex rk-sav} to solve the system \eqref{equation:phi+u}, one obtains
		
		\begin{align*}
			&\Phi(\hat t_n+\tau) = \Phi^{n+1}+\mathcal{O}(\tau^{p+1}) \\
			=& \Phi^{n} + \tau \sum\limits_{i=1}^s b_i \kbrabg{-\epsilon^2 \myinnerB{\tilde U_{ni}, \Delta \tilde L_{ni})}} + \tau \sum\limits_{i=1}^s \bar b_i \kbrabg{-\epsilon^2 \myinnerB{\tilde U_{ni}, \Delta\tilde N_{ni}} + 2\tilde R_{ni} \tilde{\tilde{N}}_{ni}} +\mathcal{O}(\tau^{p+1}) \\
			=& -\epsilon^2 \tau \sum\limits_{i=1}^s \myinnerB{\tilde U_{ni}, \Delta(b_i \tilde L_{ni}+ \bar b_i \tilde N_{ni})} + 2\tilde R_{ni} \tau \sum\limits_{i=1}^s \bar b_i \tilde{\tilde{N}}_{ni}+\mathcal{O}(\tau^{p+1}),
		\end{align*}
		where $\Phi^{n+1}$ approximates the values of $\Phi$ at $\hat t_n+\tau$ and $\Phi^{n}=\Phi(\hat{t}_n)=0$. Therefore, the above equality demonstrates that
		\begin{align*}
			&\int_{\hat t_n}^{\hat t_{n}+\tau} \kbrabg{-\epsilon^2 \myinnerB{\tilde u(t), \Delta \brab{L(\tilde u(t))+ N(\tilde u(t),\tilde r(t))}} + 2\tilde r(t) \,\tilde{N}(\tilde u(t),\tilde r(t))} \,\zd t \\
			= &-\epsilon^2 \tau \sum\limits_{i=1}^s \myinnerB{\tilde U_{ni}, \Delta(b_i \tilde L_{ni}+ \bar b_i \tilde N_{ni})} + 2\tilde R_{ni} \tau \sum\limits_{i=1}^s \bar b_i \tilde{\tilde{N}}_{ni}+\mathcal{O}(\tau^{p+1}),
		\end{align*}
		substituting them into the last relation of \eqref{Sn(1)}, we obtain
		\begin{align*}
			G_n(1)&= \mathcal{O}(\tau^{p+1}) - \kbraB{-\epsilon^2 \tau \sum\limits_{i=1}^s \myinnerb{\tilde U_{ni}, \Delta(b_i\tilde{L}_{ni}+ \bar b_i\tilde{N}_{ni}) } +2\tilde R_{ni} \tau \sum\limits_{i=1}^s \bar b_i \tilde{\tilde{N}}_{ni}}\\
			&\quad+ \int_{\hat t_n}^{\hat t_{n}+\tau} \kbrabg{-\epsilon^2 \myinnerB{\tilde u(t), \Delta \brab{L(\tilde u(t))+ N(\tilde u(t),\tilde r(t))}} + 2\tilde r(t) \,\tilde{N}(\tilde u(t),\tilde r(t))} \,\zd t\\
			&= \mathcal{O}(\tau^{p+1}).
		\end{align*}
		This completes the proof.
	\end{proof}
	
	\begin{lemma}\label{L:S'n(0) and S'n(1)}
		Assume that the IMEX RK method \eqref{imex rk-sav} has $p$-th order accuracy ($p \geq 2$), then
		$$ G_n^{'}(0)= -g\tau^2 + \mathcal{O}(\tau^{3})  \quad and  \quad G_n^{'}(1)= g\tau^2 + \mathcal{O}(\tau^{3}),$$
		for small enough time step size $\tau$, where $g$ is a constant that may vary at different occurences.
	\end{lemma}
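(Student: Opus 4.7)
The plan is to exploit the fact that $G_n(\gamma)$ is a quadratic polynomial in $\gamma$ with zero constant term, so one only needs to identify its two coefficients and then play them off against Lemma \ref{L:Sn(1)}. First I would expand \eqref{Sn(gamma)}: in the IMEX RRK formulas the stages $U_{ni},R_{ni},L_{ni},N_{ni},\tilde N_{ni}$ do not depend on $\gamma$; the only $\gamma$-dependence is through $u_{\gamma}^{n+1}=u_\gamma^n+\gamma\tau\Sigma_1$ and $r_{\gamma}^{n+1}=r_\gamma^n+\gamma\tau\Sigma_2$ with $\Sigma_1:=\sum_i b_iL_{ni}+\sum_i\bar b_iN_{ni}$ and $\Sigma_2:=\sum_i\bar b_i\tilde N_{ni}$. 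Substituting into \eqref{energy_sav} and applying integration by parts (as already used above \eqref{denominator_ne}), one arrives at
\begin{equation*}
G_n(\gamma)=-\tau\alpha_n\gamma+\tau^2\beta_n\gamma^2,
\end{equation*}
where $\alpha_n:=\sum_{i=1}^s\kbrab{\epsilon^2\myinnerb{u_\gamma^n-U_{ni},\Delta(b_iL_{ni}+\bar b_iN_{ni})}-2(r_\gamma^n-R_{ni})\bar b_i\tilde N_{ni}}$ and $\beta_n:=\tfrac{\epsilon^2}{2}\mynormb{\nabla\Sigma_1}^2+\Sigma_2^2$, both independent of $\gamma$. Differentiating gives $G_n'(0)=-\tau\alpha_n$ and $G_n'(1)=-\tau\alpha_n+2\tau^2\beta_n$.

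Next I would estimate $\alpha_n$ and $\beta_n$ for smooth data. From the stage equations in \eqref{imex rrk-sav} one immediately reads off $U_{ni}-u_\gamma^n=\mathcal{O}(\tau)$ and $R_{ni}-r_\gamma^n=\mathcal{O}(\tau)$, so $\alpha_n=\mathcal{O}(\tau)$. Since $\sum_i b_i=\sum_i\bar b_i=1$, Taylor expansion of the stages around $\hat t_n$ gives $\Sigma_1=L(u_\gamma^n)+N(u_\gamma^n,r_\gamma^n)+\mathcal{O}(\tau)$ and $\Sigma_2=\tilde N(u_\gamma^n,r_\gamma^n)+\mathcal{O}(\tau)$, whence $\beta_n=g+\mathcal{O}(\tau)$ with the leading constant
\begin{equation*}
g:=\tfrac{\epsilon^2}{2}\mynormb{\nabla(L(u_\gamma^n)+N(u_\gamma^n,r_\gamma^n))}^2+\tilde N(u_\gamma^n,r_\gamma^n)^2.
\end{equation*}

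Finally I would close the loop with Lemma \ref{L:Sn(1)}: because the IMEX RK method has order $p\ge 2$, $G_n(1)=\mathcal{O}(\tau^{p+1})=\mathcal{O}(\tau^3)$, and substituting $\gamma=1$ into the quadratic form yields $-\tau\alpha_n+\tau^2\beta_n=\mathcal{O}(\tau^3)$, i.e.\ $\tau\alpha_n=g\tau^2+\mathcal{O}(\tau^3)$. Plugging this into the expressions for $G_n'(0)$ and $G_n'(1)$ gives
\begin{equation*}
G_n'(0)=-\tau\alpha_n=-g\tau^2+\mathcal{O}(\tau^3),\qquad G_n'(1)=-\tau\alpha_n+2\tau^2\beta_n=g\tau^2+\mathcal{O}(\tau^3),
\end{equation*}
which is the claim. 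The main obstacle I expect is the careful verification of the quadratic-in-$\gamma$ decomposition: one must confirm that the $\gamma$-linear contributions coming from $-\epsilon^2\gamma\tau\myinnerb{u_\gamma^n,\Delta\Sigma_1}$ and $2\gamma\tau r_\gamma^n\Sigma_2$ cancel precisely with the linear subtraction built into \eqref{Sn(gamma)}, leaving only the defect terms involving $u_\gamma^n-U_{ni}$ and $r_\gamma^n-R_{ni}$. Once this algebraic bookkeeping is pinned down, the remaining estimates are a few lines of Taylor expansion plus a direct application of Lemma \ref{L:Sn(1)}.
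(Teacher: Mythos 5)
Your proof is correct, but it takes a genuinely different route from the paper's. You exploit the exact algebraic structure $G_n(\gamma)=-\tau\alpha_n\gamma+\tau^2\beta_n\gamma^2$ (which does hold: the $\gamma$-linear terms $-\epsilon^2\gamma\tau\myinnert{\Delta u_\gamma^n,\Sigma_1}+2\gamma\tau r_\gamma^n\Sigma_2$ combine with the subtracted dissipation term exactly into $-\gamma\tau\alpha_n$, as you anticipated), and then you import Lemma \ref{L:Sn(1)} to pin down $\tau\alpha_n=\tau^2\beta_n+\mathcal{O}(\tau^{3})=g\tau^2+\mathcal{O}(\tau^3)$; the only order information you need directly is the consistency condition $\sum_i b_i=\sum_i\bar b_i=1$ together with $\beta_n=g+\mathcal{O}(\tau)$. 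The paper instead differentiates $G_n$ explicitly, rewrites $G_n'(0)$ and $G_n'(1)$ in terms of the Butcher coefficients via the stage equations, Taylor-expands $L_{ni},N_{ni},\tilde N_{ni}$ about $\hat t_n$, and invokes the second-order order conditions $\sum_{i,j}a_{ij}b_i=\tfrac12$ (and their companions) to identify the leading $\tau^2$ coefficient; it never uses Lemma \ref{L:Sn(1)}. The trade-off: your argument is shorter, avoids the Butcher-coefficient bookkeeping, and makes the symmetry $G_n'(0)+G_n'(1)=2\tau^2\beta_n+\mathcal{O}(\tau^{p+1})$ transparent, but it makes this lemma logically dependent on Lemma \ref{L:Sn(1)} (harmless here, since both feed into Theorem \ref{T:IMEXRRK gamma} and there is no circularity); the paper's version is self-contained and shows explicitly that it is the second-order conditions, rather than the full $p$-th order accuracy, that produce the $\pm g\tau^2$ leading terms.
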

	\begin{proof}
		Noting the definition in \eqref{Sn(gamma)} and using the second relations in \eqref{imex rk-sav} yields
		\begin{align*}
			&G^{'}_n(\gamma_n)=\epsilon^2 \myinnerB{\nabla u^n_{\gamma} + \gamma_n \tau \sum\limits_{i=1}^s  \brab{b_i \nabla L_{ni}+\bar b_i \nabla N_{ni}},  \tau \sum\limits_{i=1}^s \brab{b_i \nabla L_{ni}+\bar b_i \nabla N_{ni}}} \\
			&\quad+2\gamma_n \tau^2 \braB{\sum\limits_{i=1}^s \bar b_i\tilde{N}_{ni}}^2 
			+2 r^n_{\gamma}\tau \sum\limits_{i=1}^s \bar b_i\tilde{N}_{ni} + \epsilon^2 \tau \sum\limits_{i=1}^s \myinnerB{U_{ni},\Delta \brab{b_i L_{ni}+ \bar b_i N_{ni}}} - 2R_{ni} \tau \sum\limits_{i=1}^s \bar b_i\tilde{N}_{ni}\\		
			&=\epsilon^2 \myinnerB{\nabla u^n_{\gamma} + \gamma_n \tau \sum\limits_{i=1}^s  \brab{b_i \nabla L_{ni}+\bar b_i \nabla N_{ni}},  \tau \sum\limits_{i=1}^s \brab{b_i \nabla L_{ni}+\bar b_i \nabla N_{ni}}} +2\gamma_n \tau^2 \braB{\sum\limits_{i=1}^s \bar b_i\tilde{N}_{ni}}^2 \\
			&\quad-2 \tau^2 \braB{\sum\limits_{i=1}^s \bar b_i\tilde{N}_{ni}} \braB{\sum\limits_{j=1}^{i-1} \bar a_{ij}\tilde{N}_{nj}} +\epsilon^2 \tau \sum\limits_{i=1}^s \myinnerB{U_{ni},\Delta \brab{b_i L_{ni}+ \bar b_i N_{ni}}}.
		\end{align*}
		Then, substituting $\gamma_n = 0$ and $\gamma_n = 1$ respectively and using the first relations in \eqref{imex rk-sav}, we obtain
		\begin{align*}
			G^{'}_n(0) &= \epsilon^2 \myinnerB{\nabla u^n_{\gamma},  \tau \sum\limits_{i=1}^s \brab{b_i \nabla L_{ni}+\bar b_i \nabla N_{ni}} } +\epsilon^2 \tau \sum\limits_{i=1}^s \myinnerB{U_{ni},\Delta \brab{b_i L_{ni}+ \bar b_i N_{ni}}}\\
			&\quad-2 \tau^2 \braB{\sum\limits_{i=1}^s \bar b_i\tilde{N}_{ni}} \braB{\sum\limits_{j=1}^{i-1} \bar a_{ij}\tilde{N}_{nj}}\\
			&=-\epsilon^2 \tau \sum\limits_{i=1}^s \myinnerB{u^n_{\gamma}-U_{ni}, \Delta \brab{b_i L_{ni}+\bar b_i N_{ni}}} - 2 \tau^2 \braB{\sum\limits_{i=1}^s \bar b_i\tilde{N}_{ni}} \braB{\sum\limits_{j=1}^{i-1} \bar a_{ij}\tilde{N}_{nj}}\\
			&=\epsilon^2 \tau^2 \sum\limits_{i=1}^s \myinnerB{\sum\limits_{j=1}^s \brab{a_{ij}L_{nj}+\bar{a}_{ij}N_{nj}}, \Delta \brab{b_i L_{ni}+\bar b_i N_{ni}}} -2 \tau^2 \braB{\sum\limits_{i=1}^s \bar b_i\tilde{N}_{ni}} \braB{\sum\limits_{j=1}^{i-1} \bar a_{ij}\tilde{N}_{nj}}\\
			&=\epsilon^2 \tau^2 \sum\limits_{i,j=1}^s a_{ij}b_i \myinnerb{L_{nj}, \Delta L_{ni}} + \epsilon^2 \tau^2 \sum\limits_{i,j=1}^s a_{ij} \bar b_i \myinnerb{L_{nj}, \Delta N_{ni}} +\epsilon^2 \tau^2 \sum\limits_{i,j=1}^s \bar a_{ij}b_i \myinnerb{N_{nj}, \Delta L_{ni}} \\
			&\quad + \epsilon^2 \tau^2 \sum\limits_{i,j=1}^s \bar a_{ij} \bar b_i \myinnerb{N_{nj}, \Delta N_{ni}} -2 \tau^2 \braB{\sum\limits_{i=1}^s \bar b_i\tilde{N}_{ni}} \braB{\sum\limits_{j=1}^{i-1} \bar a_{ij}\tilde{N}_{nj}},
		\end{align*}
		and
		\allowdisplaybreaks
		\begin{align*}
			G^{'}_n(1)&=\epsilon^2 \myinnerB{\nabla u^n_{\gamma} + \tau \sum\limits_{i=1}^s \brab{b_i \nabla L_{ni}+\bar b_i \nabla N_{ni}},  \tau \sum\limits_{i=1}^s \brab{b_i \nabla L_{ni}+\bar b_i \nabla N_{ni}} } \\
			&\quad +2 \tau^2 \braB{\sum\limits_{i=1}^s \bar b_i\tilde{N}_{ni}}^2 +\epsilon^2 \tau \sum\limits_{i=1}^s \myinnerB{U_{ni},\Delta \brab{b_i L_{ni}+ \bar b_i N_{ni}}} -2 \tau^2 \brab{\sum\limits_{i=1}^s \bar b_i\tilde{N}_{ni}} \brab{\sum\limits_{j=1}^{i-1} \bar a_{ij}\tilde{N}_{nj}} \\
			&=\epsilon^2 \tau \sum\limits_{i=1}^s \myinnerB{\nabla u^n_{\gamma} + \tau \sum\limits_{j=1}^s \brab{b_j \nabla L_{nj}+\bar b_j \nabla N_{nj}} , \brab{b_i \nabla L_{ni}+\bar b_i \nabla N_{ni}}} \\
			&\quad +2 \tau^2 \brab{\sum\limits_{i=1}^s \bar b_i\tilde{N}_{ni}}^2 +\epsilon^2 \tau \sum\limits_{i=1}^s \myinnerB{U_{ni},\Delta \brab{b_i L_{ni}+ \bar b_i N_{ni}}} -2 \tau^2 \brab{\sum\limits_{i=1}^s \bar b_i\tilde{N}_{ni}} \brab{\sum\limits_{j=1}^{i-1} \bar a_{ij}\tilde{N}_{nj}} \\
			&=-\epsilon^2 \tau \sum\limits_{i=1}^s \myinnerB{u^n_{\gamma}-U_{ni} + \tau \sum\limits_{j=1}^s  b_j L_{nj}+ \tau \sum\limits_{j=1}^s \bar b_j N_{nj}, \Delta \brab{b_i L_{ni}+\bar b_i N_{ni}}} \\
			&\quad +2 \tau^2 \brab{\sum\limits_{i=1}^s \bar b_i\tilde{N}_{ni}}^2 -2 \tau^2 \brab{\sum\limits_{i=1}^s \bar b_i\tilde{N}_{ni}} \brab{\sum\limits_{j=1}^{i-1} \bar a_{ij}\tilde{N}_{nj}}\\
			&=-\epsilon^2 \tau^2 \sum\limits_{i=1}^s \myinnerbg{\sum\limits_{j=1}^s \kbraB{\brab{b_j-a_{ij}} L_{nj}+ \brab{\bar b_j-\bar a_{ij}} N_{nj}}, \Delta \brab{b_if_{ni}+\bar b_i N_{ni}}} \\
			&\quad +2 \tau^2 \braB{\sum\limits_{i=1}^s \bar b_i\tilde{N}_{ni}} \kbraB{\sum\limits_{j=1}^s \bar b_j\tilde{N}_{nj}-\sum\limits_{j=1}^{i-1} \bar a_{ij}\tilde{N}_{nj}}\\
			&=-\epsilon^2 \tau^2 \sum\limits_{i,j=1}^s b_i (b_j-a_{ij}) \myinnerb{L_{nj}, \Delta L_{ni}} -\epsilon^2 \tau^2 \sum\limits_{i,j=1}^s b_i (\bar b_j-\bar a_{ij}) \myinnerb{N_{nj}, \Delta L_{ni}}\\
			&\quad - \epsilon^2 \tau^2 \sum\limits_{i,j=1}^s \bar b_i (b_j-a_{ij}) \myinnerb{L_{nj}, \Delta N_{ni}} -\epsilon^2 \tau^2 \sum\limits_{i,j=1}^s \bar b_i (\bar b_j-\bar a_{ij}) \myinnerb{N_{nj}, \Delta N_{ni}} \\
			&\quad +2 \tau^2 \braB{\sum\limits_{i=1}^s \bar b_i\tilde{N}_{ni}} \kbraB{\sum\limits_{j=1}^s (\bar b_j-\bar a_{ij})\tilde{N}_{nj}}.
		\end{align*}
		
		Using $\tilde L_n = L(u(\tilde t_n))$, $\tilde N_n = N(u(\tilde t_n),r(\tilde t_n))$, $\tilde{\tilde N}_n=\tilde N(u(\tilde t_n),r(\tilde t_n))$ and Taylor expansion, One can easily show that $L_{ni},L_{nj}=\tilde L_n+\mathcal{O}(\tau)$ and $N_{ni},N_{nj}=\tilde N_n+\mathcal{O}(\tau)$ and $\tilde N_{ni},\tilde N_{nj}=\tilde{\tilde N}_n+\mathcal{O}(\tau)$ for small enough time step size $\tau$. Hence,
		\begin{align*}
			G^{'}_n(0)&=\epsilon^2 \tau^2 \sum\limits_{i,j=1}^s a_{ij}b_i \myinnerb{\tilde L_{n}, \Delta \tilde L_{n}} + \epsilon^2 \tau^2 \sum\limits_{i,j=1}^s a_{ij} \bar b_i \myinnerb{\tilde L_{n}, \Delta \tilde{N}_n} +\epsilon^2 \tau^2 \sum\limits_{i,j=1}^s \bar a_{ij}b_i \myinnerb{\tilde N_{n}, \Delta \tilde L_{n}} \\
			&\quad + \epsilon^2 \tau^2 \sum\limits_{i,j=1}^s \bar a_{ij} \bar b_i \myinnerb{\tilde N_{n}, \Delta \tilde N_{n}} - 2 \tau^2 \sum\limits_{i,j=1}^s \bar a_{ij} \bar b_i \bra{\tilde{\tilde{N}}_{n}}^2 +\mathcal{O}(\tau^3),
		\end{align*}
		and
		\begin{align*}
			G^{'}_n(1)&=-\epsilon^2 \tau^2 \sum\limits_{i,j=1}^s b_i (b_j-a_{ij}) \myinnerb{\tilde L_{n}, \Delta \tilde L_{n}} -\epsilon^2 \tau^2 \sum\limits_{i,j=1}^s b_i (\bar b_j-\bar a_{ij}) \myinnerb{\tilde N_{n}, \Delta \tilde L_{n}} \\
			&\quad - \epsilon^2 \tau^2 \sum\limits_{i,j=1}^s \bar b_i (b_j-a_{ij}) \myinnerb{\tilde L_{n}, \Delta \tilde N_{n}} - \epsilon^2 \tau^2 \sum\limits_{i,j=1}^s \bar b_i (\bar b_j-\bar a_{ij}) \myinnerb{\tilde N_{n}, \Delta \tilde N_{n}} \\
			&\quad +2 \tau^2 \sum\limits_{i,j=1}^s \bar b_i (\bar b_j-\bar a_{ij}) \bra{\tilde{\tilde{N}}_{n}}^2 +\mathcal{O}(\tau^3).
		\end{align*}
		Applying the second-order conditions, being required for second order and above methods, for the IMEX RK methods \cite{Kennedy2003Additive}, 
		namely
		\begin{align}
			\sum\limits_{i,j=1}^s a_{ij}b_i =\frac{1}{2},\,\sum\limits_{i,j=1}^s \bar a_{ij}b_i =\frac{1}{2},\,
			\sum\limits_{i,j=1}^s a_{ij} \bar b_i=\frac{1}{2},\,\sum\limits_{i,j=1}^s \bar a_{ij} \bar b_i =\frac{1}{2},
		\end{align}
		and
		\begin{align}
			\sum\limits_{i,j=1}^s (b_j-a_{ij})b_i =\frac{1}{2},\, \sum\limits_{i,j=1}^s (\bar b_j-\bar a_{ij})b_i =\frac{1}{2},\,
			\sum\limits_{i,j=1}^s (b_j-a_{ij})\bar b_i=\frac{1}{2},\,\sum\limits_{i,j=1}^s (\bar b_j-\bar a_{ij}) \bar b_i=\frac{1}{2}.
		\end{align}
		Then we can derive
		\begin{align*}
			G^{'}_n(0)&=\frac{1}{2}\epsilon^2 \tau^2 \kbraB{\myinnerb{\tilde L_{n}, \Delta \tilde L_{n}} + \myinnerb{\tilde L_{n}, \Delta \tilde N_{n}} + \myinnerb{\tilde N_{n}, \Delta \tilde L_{n}} + \myinnerb{\tilde N_{n}, \Delta \tilde N_{n}}} - \tau^2 \bra{\tilde{\tilde{N}}_{n}}^2+\mathcal{O}(\tau^3)\\
			&= - \kbrabg{\frac{1}{2}\epsilon^2 \myinnerB{\nabla \brab{\tilde L_{n}+\tilde N_{n}}, \nabla \brab{\tilde L_{n}+\tilde N_{n}}} +(\tilde{\tilde{N}})^2} \tau^2 +\mathcal{O}(\tau^3),
		\end{align*}
		and
		\begin{align*}
			G^{'}_n(1)&=-\frac{1}{2} \epsilon^2 \tau^2 \kbraB{\myinnerb{\tilde L_{n}, \Delta \tilde L_{n}} + \myinnerb{\tilde L_{n}, \Delta \tilde N_{n}} + \myinnerb{\tilde N_{n}, \Delta \tilde L_{n}} + \myinnerb{\tilde N_{n}, \Delta \tilde N_{n}}} + \tau^2 \bra{\tilde{\tilde{N}}_{n}}^2 + \mathcal{O}(\tau^3)\\
			&= \kbrabg{\frac{1}{2}\epsilon^2 \myinnerB{\nabla \brab{\tilde L_{n}+\tilde N_{n}}, \nabla \brab{\tilde L_{n}+\tilde N_{n}}} +(\tilde{\tilde{N}})^2} \tau^2 +\mathcal{O}(\tau^3).
		\end{align*}
		This completes the proof.
	\end{proof}
	
	\begin{remark}	
		The condition that the step size needs to be small enough is because we apply the Taylor expansion in our analysis.
	\end{remark}
	
	Combining the Lemmas \ref{L:Sn(1)} and \ref{L:S'n(0) and S'n(1)}, we derive an estimate for 
	the relaxation coefficient $\gamma_n$ given in \eqref{imex rrk-sav}, as presented in Theorem \ref{T:IMEXRRK gamma}.
	\begin{theorem}\label{T:IMEXRRK gamma}
		Assume that the IMEX RK method \eqref{imex rk-sav} has $p$-th order accuracy ($p \geq 2$), 
		then $\gamma_n$ given in \eqref{imex rrk-sav}, satisfies
		\begin{align}\label{T:gamma_n}
			\gamma_n = 1+\mathcal{O}(\tau^{p-1}),
		\end{align}
		for small enough time step size $\tau$.
	\end{theorem}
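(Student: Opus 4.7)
The plan is to exploit the fact that $G_n(\gamma)$ is an honest quadratic polynomial in $\gamma$, so its non-zero root has a closed form that can be attacked directly using the two estimates already in hand. First I would note from the definition \eqref{Sn(gamma)} together with the expansion of $E^{n+1}[u^{n+1}_\gamma, r^{n+1}_\gamma]$ that $G_n$ is at most quadratic in $\gamma$: the update \eqref{imex rrk-sav} makes $u^{n+1}_\gamma$ and $r^{n+1}_\gamma$ affine in $\gamma_n$, so the quadratic functional $E^{n+1}$ produces a polynomial of degree at most two, while the subtracted dissipation contribution in \eqref{Sn(gamma)} is linear in $\gamma_n$. Moreover $G_n(0)=0$, since $\gamma_n=0$ gives $u^{n+1}_\gamma=u^n_\gamma$, $r^{n+1}_\gamma=r^n_\gamma$, and kills the linear piece along with the difference $E^{n+1}-E^n$. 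Thus I can write
\begin{equation*}
G_n(\gamma) = \alpha_n \gamma^2 + \beta_n \gamma,
\end{equation*}
and in the non-degenerate branch of \eqref{Def_gamma_n} the relaxation coefficient is the unique non-zero root $\gamma_n = -\beta_n/\alpha_n$.

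Next I would extract $\alpha_n$ and $\beta_n$ by reading off $G_n'$ at $0$ and at $1$. Since $G_n'(\gamma)=2\alpha_n\gamma+\beta_n$, Lemma~\ref{L:S'n(0) and S'n(1)} gives
\begin{equation*}
\beta_n = G_n'(0) = -g\tau^2 + \mathcal{O}(\tau^3), \qquad 2\alpha_n + \beta_n = G_n'(1) = g\tau^2 + \mathcal{O}(\tau^3),
\end{equation*}
whence $\alpha_n = g\tau^2 + \mathcal{O}(\tau^3)$; Lemma~\ref{L:Sn(1)} supplies $\alpha_n + \beta_n = G_n(1) = \mathcal{O}(\tau^{p+1})$. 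Combining these three pieces,
\begin{equation*}
\gamma_n - 1 \;=\; -\frac{\beta_n}{\alpha_n} - 1 \;=\; -\frac{\alpha_n + \beta_n}{\alpha_n} \;=\; -\frac{G_n(1)}{g\tau^2 + \mathcal{O}(\tau^3)} \;=\; \mathcal{O}(\tau^{p-1}),
\end{equation*}
which is exactly \eqref{T:gamma_n}. In the degenerate branch of \eqref{Def_gamma_n}, where \eqref{denominator_ne} fails, $\gamma_n$ is set to $1$ by definition and the conclusion is trivial.

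The main obstacle is legitimizing the division by $\alpha_n$ in the last display, i.e.\ guaranteeing that $\alpha_n$ really behaves like $\tau^2$ rather than being damaged by cancellation. This reduces to a lower bound on $g$, which by the derivation in the proof of Lemma~\ref{L:S'n(0) and S'n(1)} agrees up to $\mathcal{O}(\tau)$ with the denominator appearing in \eqref{denominator_ne}: the Taylor expansions $L_{ni}=\tilde L_n+\mathcal{O}(\tau)$, $N_{ni}=\tilde N_n+\mathcal{O}(\tau)$, $\tilde N_{ni}=\tilde{\tilde N}_n+\mathcal{O}(\tau)$ together with $\sum_i b_i=\sum_i \bar b_i=1$ identify the finite-$\tau$ denominator with $g$ at leading order. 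Since the non-degenerate branch is invoked precisely when that denominator is non-zero, and a uniform lower bound across the time grid follows from the smoothness of the exact solution on $[0,T_0]$, the ``small enough $\tau$'' hypothesis in the theorem absorbs the obstruction: once $\alpha_n \ge c\tau^2$ uniformly, the arithmetic $\mathcal{O}(\tau^{p+1})/\tau^2 = \mathcal{O}(\tau^{p-1})$ delivers the claim.
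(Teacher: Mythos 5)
Your proposal is correct and follows essentially the same route as the paper: the paper's proof likewise treats $G_n$ as a quadratic vanishing at $0$ and combines $G_n(1)=\mathcal{O}(\tau^{p+1})$ with $G_n'(0),G_n'(1)=\mp g\tau^2+\mathcal{O}(\tau^3)$ to conclude, though it merely asserts the existence of a root $1+\mathcal{O}(\tau^{p-1})$ where you carry out the explicit computation $\gamma_n-1=-(\alpha_n+\beta_n)/\alpha_n=-G_n(1)/\alpha_n$. Your closing discussion of why $\alpha_n$ is genuinely of size $\tau^2$ (i.e.\ why $g$ is bounded away from zero on the non-degenerate branch) addresses a point the paper leaves entirely implicit, and is a welcome addition.
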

	\begin{proof}
		In the specific case of $\frac{\epsilon^2}{2} \mynormB{\nabla \braB{\sum\limits_{i=1}^s  b_i L_{ni}+ \sum\limits_{i=1}^s \bar b_i N_{ni}}}^2 + \braB{\sum\limits_{i=1}^s \bar b_i\tilde{N}_{ni}}^2=0$, according to the definition of $\gamma_n$ in \eqref{Def_gamma_n}, \eqref{T:gamma_n} holds for small enough time step $\tau$.
		
		In other cases, from \eqref{Sn(gamma)}, since $G_n(\gamma_n)$ is quadratic with respect to $\gamma$ and satisfies
		\[
		\begin{aligned}
			G_n(0) &= 0, & G_n(1) &= \mathcal{O}(\tau^{p+1}), \\
			G^{'}_n(0) &= -g\tau^2 + \mathcal{O}(\tau^3), & G^{'}_n(1) &= g\tau^2 + \mathcal{O}(\tau^3),
		\end{aligned}
		\]
		there exists a solution $\gamma_n = 1+\mathcal{O}(\tau^{p-1})$ to the equation $G_n(\gamma_n)$. 
		Thus, $\gamma_n$ given in \eqref{imex rrk-sav} meets \eqref{T:gamma_n}, completing this proof.
	\end{proof}
	
	\subsection{Analysis of the truncation error}
	We now turn our attention to investigate the order of accuracy for the IMEX RRK methods \eqref{imex rrk-sav} 
	based on the estimation of the relaxation coeffcient $\gamma_n$ deduced in the last subsection. 
	Before showing the accuracy, we reformulate the scheme \eqref{imex rrk-sav} into
	\begin{align}\label{imex rrk-sav rewrite}
		\begin{cases}
			U_{ni} = u_{\gamma}^n + \tau \sum\limits_{j=1}^i a_{ij}L_{nj} + \tau \sum\limits_{j=1}^{i-1} \bar a_{ij}N_{nj},\\
			R_{ni} = r_{\gamma}^n + \tau \sum\limits_{j=1}^{i-1} \bar a_{ij}\tilde{N}_j, \quad i=1,\cdots,s,\\
			u^{n+1} = u_{\gamma}^n + \tau \sum\limits_{i=1}^s b_i L_{ni} + \tau \sum\limits_{i=1}^s \bar b_i N_{ni},\\
			r^{n+1} = r_{\gamma}^n + \tau \sum\limits_{i=1}^s \bar b_i\tilde{N}_{ni},\\
			u_{\gamma}^{n+1} = u^{n+1} + (\gamma_n-1)(u^{n+1}-u_{\gamma}^{n}),\\
			r_{\gamma}^{n+1} = r^{n+1} + (\gamma_n-1)(r^{n+1}-r_{\gamma}^{n}).
		\end{cases}
	\end{align}
	Then, following the proof technique in \cite{LiLiZhang2023Implicit,Ranocha2020General}, we can derive the truncation errors of the approximation solution.
	
	\begin{theorem}\label{T:truncation errors}
		Assume that the IMEX RK method \eqref{imex rk-sav} has $p$-th order accuracy ($p \geq 2$), satisfies
		\begin{itemize}
			\item For IDT, i.e., $u_{\gamma}^{n+1}$ and $r_{\gamma}^{n+1}$ approximate the values of $u$ and $r$ at $\hat{t}_{n} + \tau$, respectively, then the IMEX RRK method \eqref{imex rrk-sav} will be of $(p-1)$th order accuracy, namely $u(t_{n+1}) = u^{n+1}_\gamma+\mathcal{O}(\tau^{p})$ and $r(t_{n+1}) = r^{n+1}_\gamma+\mathcal{O}(\tau^{p})$. 
			\item For RT, i.e., $u_{\gamma}^{n+1}$ and $r_{\gamma}^{n+1}$ approximate the values of $u$ and $r$ at $\hat{t}_{n} + \gamma_n \tau$, respectively, then the IMEX RRK method \eqref{imex rrk-sav} will be of $p$th order accuracy, namely $u(t_{n+1}) = u^{n+1}_\gamma+\mathcal{O}(\tau^{p+1})$ and $r(t_{n+1}) = r^{n+1}_\gamma+\mathcal{O}(\tau^{p+1})$.	
		\end{itemize}		
	\end{theorem}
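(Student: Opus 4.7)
The plan is to exploit the algebraic reformulation \eqref{imex rrk-sav rewrite}, which isolates the relaxation correction as a clean perturbation of a standard $p$-th order IMEX RK step, and then combine this with the estimate $\gamma_n = 1 + \mathcal{O}(\tau^{p-1})$ from Theorem \ref{T:IMEXRRK gamma}. Throughout, I would work with the local truncation error by considering the auxiliary solution $\tilde{u}, \tilde{r}$ of the system \eqref{equation} with initial data $u_\gamma^n, r_\gamma^n$ prescribed at $\hat{t}_n$, and then propagate to a global statement in the standard way.

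First, I would record two basic facts that follow directly from \eqref{imex rrk-sav rewrite}. Since $u^{n+1}$ and $r^{n+1}$ are the outputs of the underlying $p$-th order IMEX RK method initialized at $u_\gamma^n, r_\gamma^n$, the consistency of the base scheme gives
\begin{equation*}
u^{n+1} = \tilde{u}(\hat{t}_n + \tau) + \mathcal{O}(\tau^{p+1}), \qquad r^{n+1} = \tilde{r}(\hat{t}_n + \tau) + \mathcal{O}(\tau^{p+1}),
\end{equation*}
and the single-step increments satisfy $u^{n+1} - u_\gamma^n = \mathcal{O}(\tau)$ and $r^{n+1} - r_\gamma^n = \mathcal{O}(\tau)$. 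Combined with Theorem \ref{T:IMEXRRK gamma}, this gives $(\gamma_n-1)(u^{n+1}-u_\gamma^n) = \mathcal{O}(\tau^{p-1}) \cdot \mathcal{O}(\tau) = \mathcal{O}(\tau^p)$, and similarly for the $r$-component.

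For the IDT case, $\hat{t}_{n+1} = \hat{t}_n + \tau$, so the last two relations of \eqref{imex rrk-sav rewrite} yield directly
\begin{equation*}
u_\gamma^{n+1} - \tilde{u}(\hat{t}_{n+1}) = \bigl[u^{n+1} - \tilde{u}(\hat{t}_n+\tau)\bigr] + (\gamma_n-1)(u^{n+1}-u_\gamma^n) = \mathcal{O}(\tau^{p+1}) + \mathcal{O}(\tau^p) = \mathcal{O}(\tau^p),
\end{equation*}
which is a local error of order $\tau^p$ and hence a global order of $p-1$ by the usual accumulation argument. For the RT case, $\hat{t}_{n+1} = \hat{t}_n + \gamma_n\tau$; here I would rewrite the update as $u_\gamma^{n+1} = u_\gamma^n + \gamma_n(u^{n+1}-u_\gamma^n)$ and Taylor expand both $\tilde{u}(\hat{t}_n+\tau)$ and $\tilde{u}(\hat{t}_n+\gamma_n\tau)$ about $\hat{t}_n$. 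The key observation is that the difference
\begin{equation*}
\gamma_n\bigl[\tilde{u}(\hat{t}_n+\tau)-\tilde{u}(\hat{t}_n)\bigr] - \bigl[\tilde{u}(\hat{t}_n+\gamma_n\tau)-\tilde{u}(\hat{t}_n)\bigr] = \sum_{k=1}^{p} \frac{\tau^k}{k!}\,\tilde{u}^{(k)}(\hat{t}_n)\,(\gamma_n-\gamma_n^k) + \mathcal{O}(\tau^{p+1})
\end{equation*}
has a vanishing $k=1$ contribution, and for $k\ge 2$ one has $\gamma_n - \gamma_n^k = -\gamma_n(\gamma_n^{k-1}-1) = \mathcal{O}(\tau^{p-1})$, so each remaining term is $\mathcal{O}(\tau^{k+p-1}) \subseteq \mathcal{O}(\tau^{p+1})$. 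Adding the $\mathcal{O}(\gamma_n \tau^{p+1})$ contribution inherited from the base scheme shows that $u_\gamma^{n+1} - \tilde{u}(\hat{t}_{n+1}) = \mathcal{O}(\tau^{p+1})$, which gives the claimed $p$-th order global accuracy.

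The main obstacle is the RT step. The gain of one order relative to IDT rests entirely on the exact cancellation of the linear Taylor term in $\gamma_n\cdot\tau u'(\hat{t}_n)$ versus $(\gamma_n\tau)u'(\hat{t}_n)$, together with the identity $\gamma_n-\gamma_n^k = \mathcal{O}(\gamma_n-1)$ for $k\ge 2$; this algebraic cancellation, not any new analytical machinery, is what turns the $\mathcal{O}(\tau^{p-1})$ deficit in $\gamma_n$ into an $\mathcal{O}(\tau^{p+1})$ local error. The $r$-component is handled verbatim. Finally, standard linear error propagation, with bounded step-size ratio $\gamma_n = 1 + o(1)$ in the RT case, upgrades local to global estimates and completes the proof.
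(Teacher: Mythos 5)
Your proposal is correct and follows essentially the same route as the paper: both rest on the reformulation \eqref{imex rrk-sav rewrite}, the consistency of the underlying $p$-th order IMEX RK step, the estimate $\gamma_n=1+\mathcal{O}(\tau^{p-1})$ from Theorem \ref{T:IMEXRRK gamma}, and, for RT, the exact cancellation of the linear Taylor term that upgrades the $\mathcal{O}(\tau^{p})$ IDT defect to $\mathcal{O}(\tau^{p+1})$. The only cosmetic difference is that you expand about $\hat{t}_n$ and package the cancellation as $\gamma_n-\gamma_n^k=\mathcal{O}(\gamma_n-1)$, whereas the paper expands $u\brab{\hat t_n+\tau+(\gamma_n-1)\tau}$ about $\hat t_n+\tau$; both yield the same local error and the same conclusion.
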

	\begin{proof}
		Using the $p$-order of IMEX RK method \eqref{imex rk-sav} and substituting the exact solution into the third and fourth relations in \eqref{imex rrk-sav rewrite}, we obtain
		\begin{align}\label{ur_p+1}
			\begin{split}
				&u(\hat t_n+\tau)=u(\hat t_n)+\tau\sum\limits_{i=1}^{s}b_i L(u(\hat t_{ni}))+\tau\sum\limits_{i=1}^{s}\bar b_i N(u(\hat t_{ni}),r(\hat t_{ni}))+\mathcal{O}(\tau^{p+1}),\\
				&r(\hat t_n+\tau)=r(\hat t_n)+\tau\sum\limits_{i=1}^{s}\bar b_i \tilde N(u(\hat t_{ni}),r(\hat t_{ni}))+\mathcal{O}(\tau^{p+1}),
			\end{split}		
		\end{align} 
		where $\hat t_{ni}=\hat t_n+c_i\tau,$ ($i=1,\cdots,s$). Then, we estimate the fifth and sixth relations' error in \eqref{imex rrk-sav rewrite}. Plugging the exact solution into the relations and defining 
		\begin{align*}
			&\Psi_1(u,r) := u(\hat t_n)+\tau\sum_{i=1}^{s}b_i L(u(\hat t_{ni}))+\tau\sum_{i=1}^{s}\bar b_i N(u(\hat t_{ni}),r(\hat t_{ni})),\\
			&\Psi_2(u,r) := r(\hat t_n)+\tau\sum\limits_{i=1}^{s}\bar b_i \tilde N(u(\hat t_{ni}),r(\hat t_{ni}))
		\end{align*}
		gives
		\begin{align}\label{psi_equ}
			\begin{split}
				&u(\hat t_{n+1}) = \Psi_1(u,r) + (\gamma_n-1)(\Psi_1(u,r)-u(\hat t_n))+d_{u,n+1},\\
				&r(\hat t_{n+1}) = \Psi_2(u,r) + (\gamma_n-1)(\Psi_2(u,r)-r(\hat t_n))+d_{r,n+1},
			\end{split}		
		\end{align}
		where $d_{u,n+1}$ and $d_{r,n+1}$ denote the errors between ($u(\hat t_{n+1})$, $u^{n+1}_\gamma$) and ($r(\hat t_{n+1})$, $r^{n+1}_\gamma$), respectively, and are calculated by IMEX RRK methods \eqref{imex rrk-sav}. Hence, using \eqref{T:gamma_n}, \eqref{ur_p+1} and \eqref{psi_equ}, we get
		\begin{align*}
			d_{u,n+1} &= u(\hat t_{n+1}) - \Psi_1(u,r) - (\gamma_n-1) \brab{\Psi(u,r) - u(\hat t_n)}\\
			&= u(\hat t_{n+1})-u(\hat t_n+\tau)-\tau u^{'}(\hat t_n+\tau) \mathcal{O}(\tau^{p-1})+\mathcal{O}(\tau^{p+1}),
		\end{align*}
		and similarly, the scalar auxiliary variable $r$ has the same result, namely
		\begin{align*}
			d_{r,n+1} = r(\hat t_{n+1})-r(\hat t_n+\tau)-\tau r^{'}(\hat t_n+\tau) \mathcal{O}(\tau^{p-1})+\mathcal{O}(\tau^{p+1}).
		\end{align*}
		
		For IDT, in other words $\hat t_{n+1}= \hat t_{n}+\tau$. 
		Applying the estimate of $\gamma_n$, we have 
		\begin{align*}
			d_{u,n+1}=-\tau u^{'}(\hat t_n+\tau) \mathcal{O}(\tau^{p-1})+\mathcal{O}(\tau^{p+1})=\mathcal{O}(\tau^{p})
		\end{align*}
		and $d_{r,n+1}=\mathcal{O}(\tau^{p}),$
		that is the IMEX RRK method \eqref{imex rrk-sav} achieves $(p-1)$th order accuracy. 
		
		For RT, in other words $\hat t_{n+1}= \hat t_{n}+\gamma_n \tau$. 
		Applying the estimate of $\gamma_n$ and Taylor expansion, we get
		\begin{align*}
			d_{u,n+1}&=u\big(\hat t_{n}+\tau+(\gamma_n-1)\tau\big)-u(\hat t_n+\tau)- \tau u^{'}(\hat t_n+\tau) (\gamma_n-1)+\mathcal{O}(\tau^{p+1}) = \mathcal{O}(\tau^{p+1})
		\end{align*}
		and $d_{r,n+1}=\mathcal{O}(\tau^{p+1})$,
		that is the IMEX RRK method \eqref{imex rrk-sav} under RT achieves $p$th order accuracy. 
		
	\end{proof}

	\section{Numerical tests} 
	In this section, we present several numerical examples to validate the theoretical results for the IMEX RRK methods \eqref{imex rrk-sav} presented in the above. Due to the high accuracy of the Fourier pseudo-spectral method, we employ this method with a uniform grid scale of $128 \times 128$ for the spatial discretization to ensure that 
	the spatial error is negligible compared to the temporal error. Following the idea of the proof of Theorem 4.1 in \cite{Yang2022Arbitrarily}, the modified energy of the spatial semi-discretization also satisfies the energy dissipation law.
	
	\begin{example}\label{example1}
		{\rm Consider the Allen-Cahn (AC) equation}
		\[
		u_t - \epsilon^2 \Delta u + u^3 - u = 0, \quad (x, y) \in (0, 2\pi)^2, \quad t \in (0, 1].
		\]
	\end{example}
	Firstly, we denote IMEX RRK$(s,p)$ be an $s$-stage, $p$-th order implicit-explicit relaxation RK method (see appendix for coefficients) and set $\epsilon= 0.5, C_0=0$ with the initial condition  $u_0 = 0.5 \sin x \sin y$.
	We investigate the relaxation coeffcient $\gamma_n$ at every step by computing $\mynorm{\gamma _n-1}_{\ell^{\infty}}$ (i.e. the maximum distances between $\gamma$ and $1$) and $\mynorm{G_n(1)}_{\ell^{\infty}}$ (i.e. the maximum values of $G_n(1)$) for various time step sizes. 
	In Figures \ref{F:Ex1_Gamma} and \ref{F:Ex1_Sn}, IMEX RRK$(s,p)$ have $(p-1)$-order for $\mynorm{\gamma _n-1}_{\ell^{\infty}}$ and $(p+1)$-order for $\mynorm{G_n(1)}_{\ell^{\infty}}$, 
	which supports our analytical results (Lemma \ref{L:Sn(1)} and Theorem \ref{T:IMEXRRK gamma}). 
	We would like to point out that the different interpretations (IDT and RT) yield the same results for both $\mynorm{\gamma _n-1}_{\ell^{\infty}}$ and $\mynorm{G_n(1)}_{\ell^{\infty}}$.
	Besides, we illustrate the modified energy dissipation property of the IMEX RRK$(3,2)$ method. The tiem step size is $\tau=10^{-3}$ and the final time is $T_0=5$. Figure \ref{F:energy_ac} shows that the IMEX RRK method \eqref{imex rrk-sav} satisfies the energy dissipation law.
	
	\begin{figure}
		\includegraphics[width=0.88\textwidth]{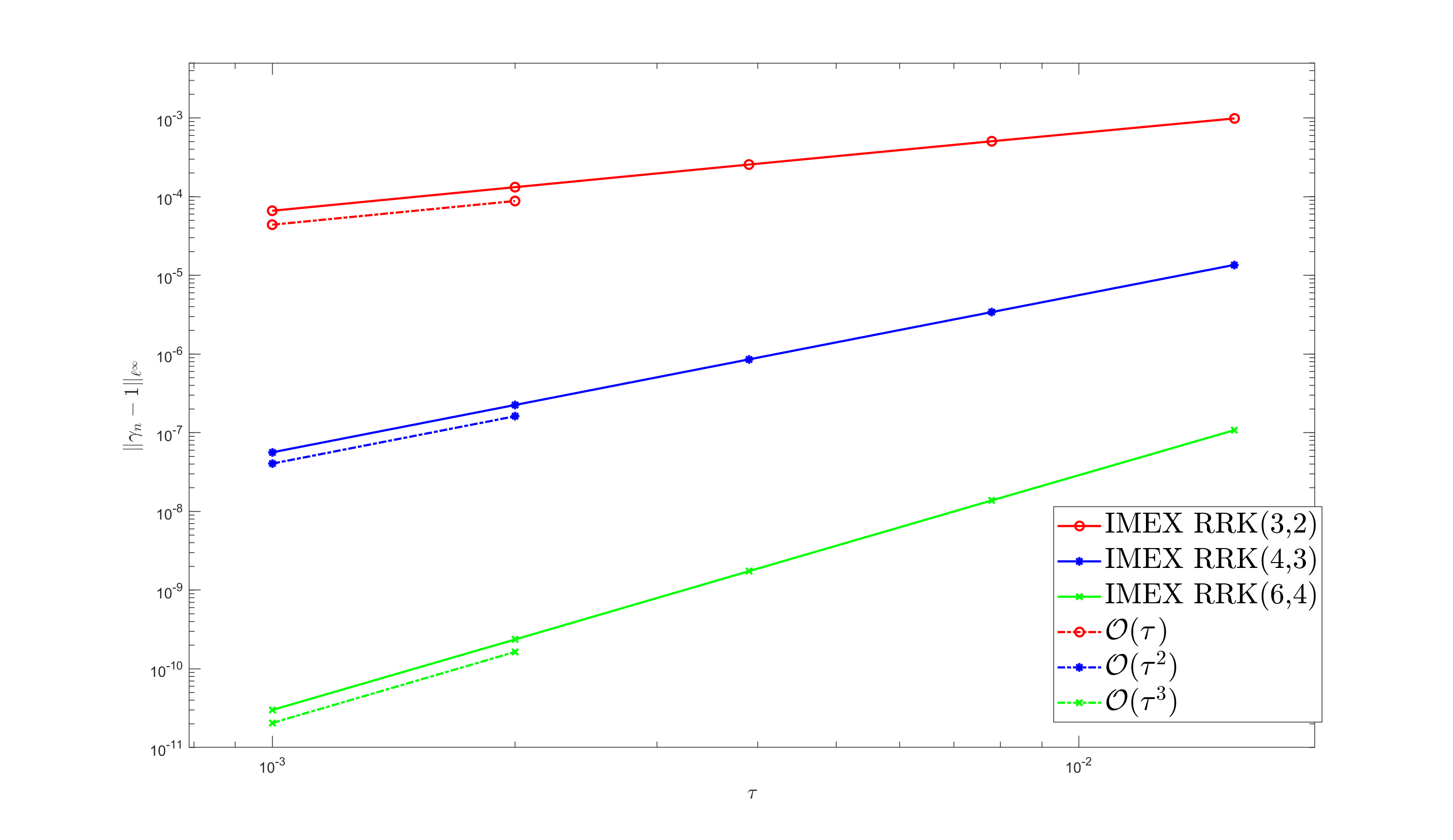}
		\caption{\label{F:Ex1_Gamma} $\mynorm{\gamma _n-1}_{\ell^{\infty}}$ for some relaxation (RT) methods (AC equation)} 
	\end{figure} 
	
	\begin{figure}
		\includegraphics[width=0.88\textwidth]{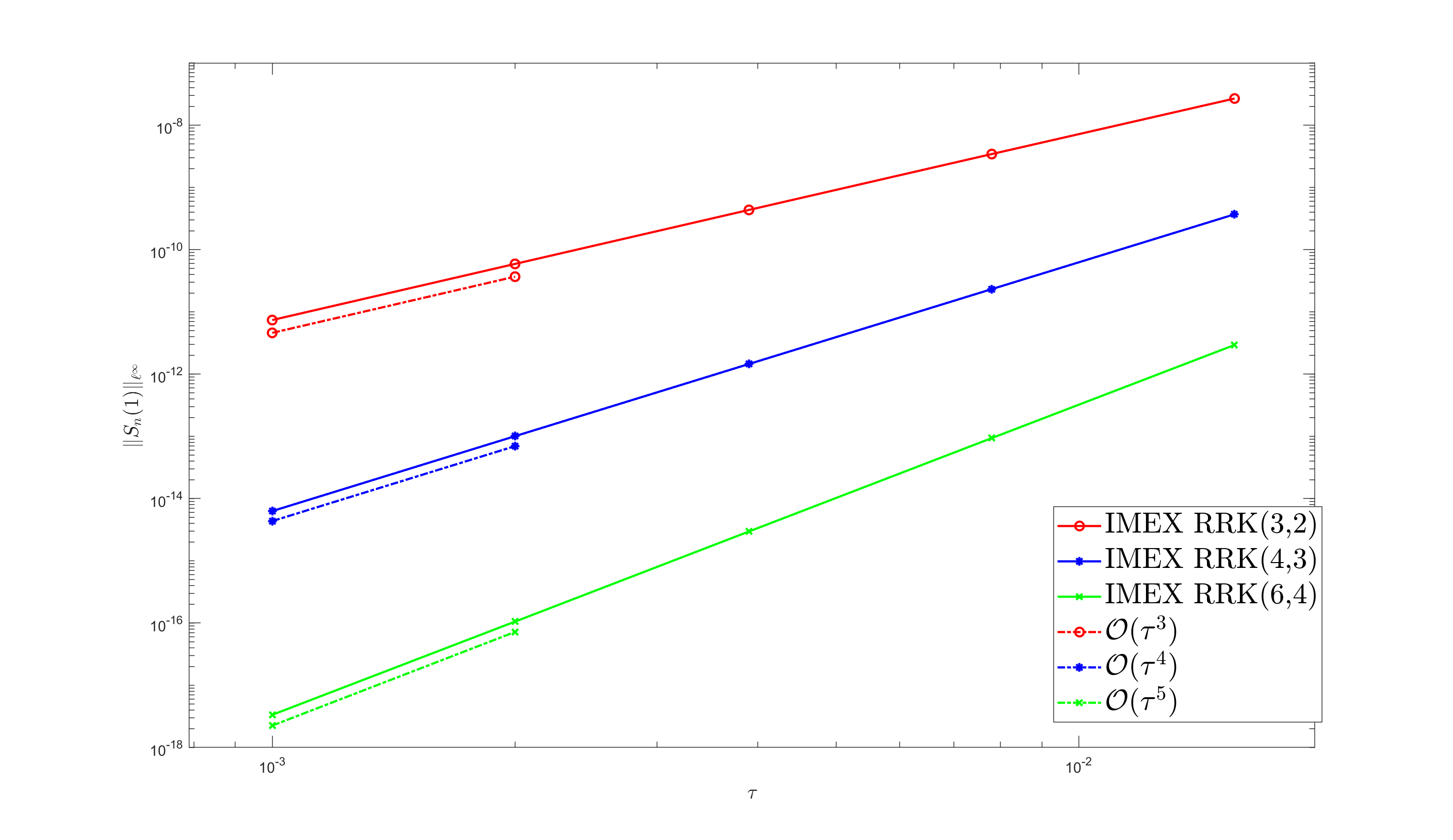}
		\caption{\label{F:Ex1_Sn} $\mynorm{G_n(1)}_{\ell^{\infty}}$ for some relaxation (RT) methods (AC equation)} 
	\end{figure} 

	\begin{figure}
		\includegraphics[width=0.88\textwidth]{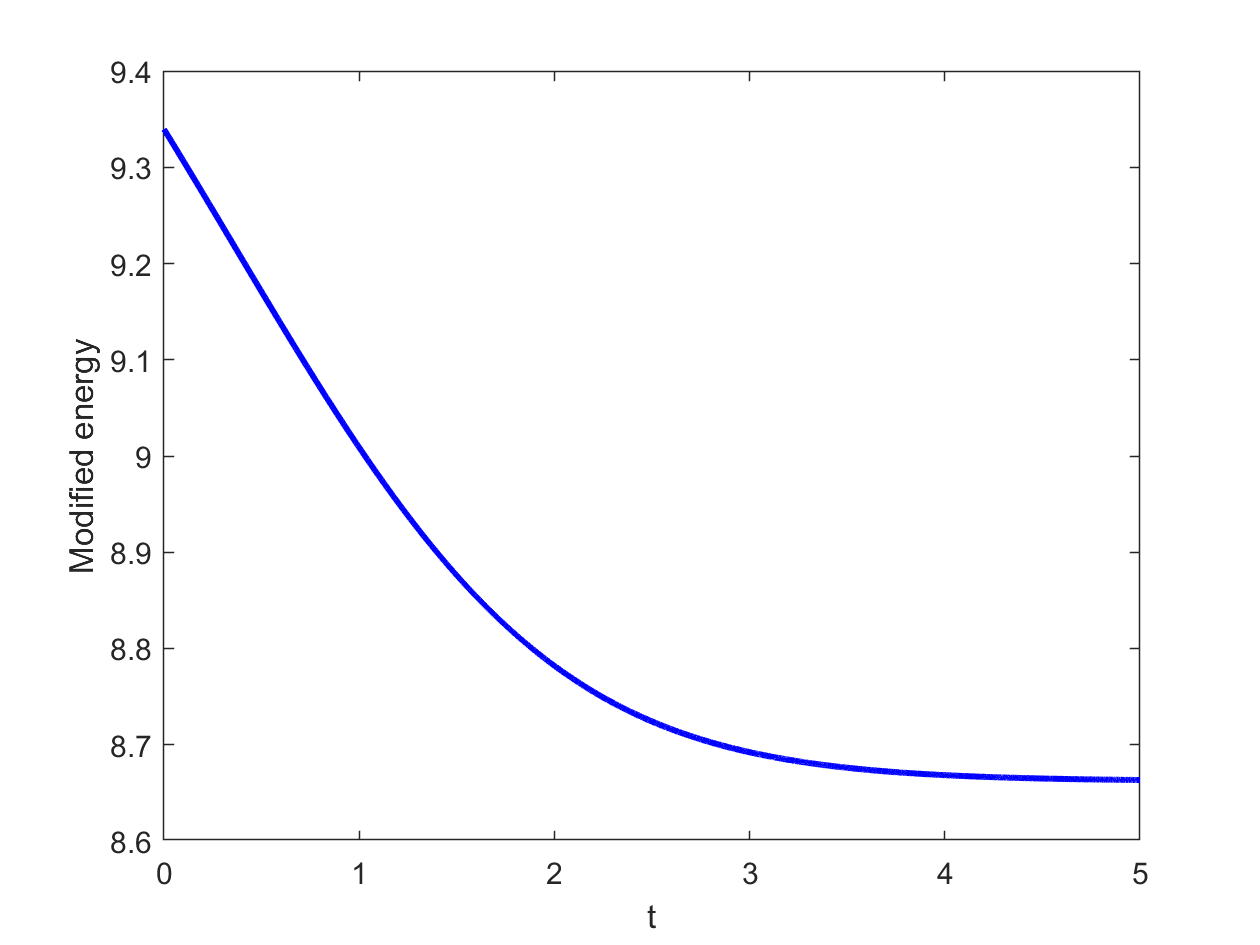}
		\caption{\label{F:energy_ac} Discrete energy evolution (AC equation)} 
	\end{figure} 
	
	Secondly, we show the accuracy of the approximation solution from the IMEX RRK method \eqref{imex rrk-sav} using a reference solution with a very small time step. We measure the errors ($\ell^{\infty}$-norm) and convergence orders at $T_0=1$. The results, shown in Table \ref{Table:EX1 ac-error}, 
	reveal that the IDT methods get $(p-1)$th order accuracy, which is one order lower than standard methods, whereas RT methods get $p$th order accuracy, matching the standard methods. 
	For example, IMEX RRK$(3,2)$ has first-order accuracy for IDT and second-order accuracy for RT, which confirms the Theorem \ref{T:truncation errors}.
	
	\begin{small}
		\begin{table}
			\caption{\label{Table:EX1 ac-error} $\ell^{\infty}$-norm errors and convergence orders (AC equation)}
			\begin{tabular}{|c|c|c|c|c|c|cc|cc|}
				\hline
				\multicolumn{2}{|c}{\multirow{2}*{$p$}} & \multicolumn{2}{|c}{\multirow{2}*{Method}} & \multicolumn{2}{|c|}{\multirow{2}*{$\tau$}} &  \multicolumn{2}{c|}{IDT} & \multicolumn{2}{c|}{RT}\\
				\multicolumn{2}{|c}{~} & \multicolumn{2}{|c}{~} & \multicolumn{2}{|c|}{~} & Error & Order & Error & Order\\
				\hline
				\multicolumn{2}{|c}{\multirow{4}*{2}} & \multicolumn{2}{|c}{\multirow{4}*{IMEX RRK(3,2)}} & \multicolumn{2}{|c|}{$1/100$} &  5.5774e-05 & - & 4.3957e-07  & - \\
				\multicolumn{2}{|c}{\multirow{4}*{~}} & \multicolumn{2}{|c}{\multirow{4}*{~}} & \multicolumn{2}{|c|}{$1/200$} &  2.8253e-05  & 0.9812 & 1.1321e-07 & 1.9571 \\
				\multicolumn{2}{|c}{\multirow{4}*{~}} & \multicolumn{2}{|c}{\multirow{4}*{~}} & \multicolumn{2}{|c|}{$1/400$} &  1.4195e-05  & 0.9930 & 2.8732e-08 & 1.9783 \\
				\multicolumn{2}{|c}{\multirow{4}*{~}} & \multicolumn{2}{|c}{\multirow{4}*{~}} & \multicolumn{2}{|c|}{$1/800$} &  7.0929e-06  & 1.0009 & 7.2373e-09 & 1.9891 \\
				\hline
				\multicolumn{2}{|c}{\multirow{4}*{3}} & \multicolumn{2}{|c}{\multirow{4}*{IMEX RRK(4,3)}} & \multicolumn{2}{|c|}{$1/100$} &  5.1766e-07  & - & 5.1005e-09 & - \\
				\multicolumn{2}{|c}{\multirow{4}*{~}} & \multicolumn{2}{|c}{\multirow{4}*{~}} & \multicolumn{2}{|c|}{$1/200$} &  1.2935e-07  & 2.0008 & 6.4715e-10  & 2.9784 \\
				\multicolumn{2}{|c}{\multirow{4}*{~}} & \multicolumn{2}{|c}{\multirow{4}*{~}} & \multicolumn{2}{|c|}{$1/400$} &  3.2327e-08  & 2.0004 & 8.1502e-11  & 2.9892 \\
				\multicolumn{2}{|c}{\multirow{4}*{~}} & \multicolumn{2}{|c}{\multirow{4}*{~}} & \multicolumn{2}{|c|}{$1/800$} &  8.0811e-09  & 2.0001 & 1.0224e-11  & 2.9949 \\
				\hline
				\multicolumn{2}{|c}{\multirow{4}*{4}} & \multicolumn{2}{|c}{\multirow{4}*{IMEX RRK(6,4)}} & \multicolumn{2}{|c|}{$1/16$} &  6.0853e-07  & - & 1.8273e-08  & - \\
				\multicolumn{2}{|c}{\multirow{4}*{~}} & \multicolumn{2}{|c}{\multirow{4}*{~}} & \multicolumn{2}{|c|}{$1/32$} &  8.0790e-08  & 2.9131 & 1.2800e-09  & 3.8356 \\
				\multicolumn{2}{|c}{\multirow{4}*{~}} & \multicolumn{2}{|c}{\multirow{4}*{~}} & \multicolumn{2}{|c|}{$1/64$} &  1.0422e-08  & 2.9546 & 8.5163e-11  & 3.9097 \\
				\multicolumn{2}{|c}{\multirow{4}*{~}} & \multicolumn{2}{|c}{\multirow{4}*{~}} & \multicolumn{2}{|c|}{$1/128$} &  1.3239e-09  & 2.9768 & 5.5013e-12  & 3.9524 \\
				\hline
			\end{tabular}
		\end{table}
	\end{small}
	
	Lastly, we show the efficiency of the proposed scheme from the perspective of phase separation by using the IMEX RRK(3,2) method.
	Here, we set $ u_0 = 0.001 \cdot \text{Rand}(x, y) $, $\tau = 10^{-3}$, and $\epsilon=0.005$. Figure \ref{F:Ex1_evolution} shows the coarsening evolution, displaying interconnected patterns where yellow and blue regions represent $u = 1$ and $u = -1$, respectively. In the beginning, the image exhibits a certain degree of randomness. With the development of time, the phases will gradually merge, forming a shape at approximately $t = 40$, and then become stable eventually.

	\begin{figure}
		\includegraphics[width=0.32\textwidth]{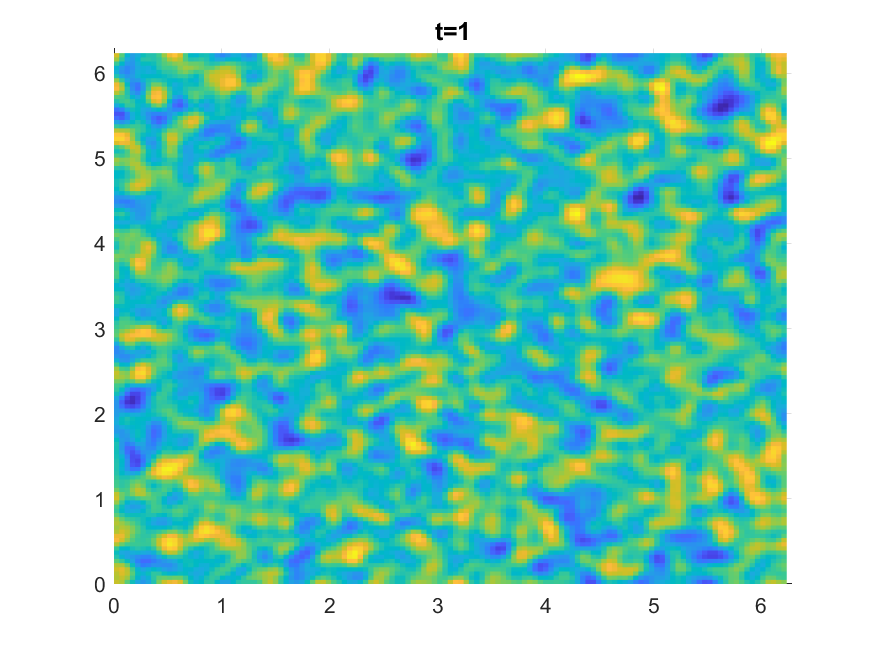}
		\includegraphics[width=0.32\textwidth]{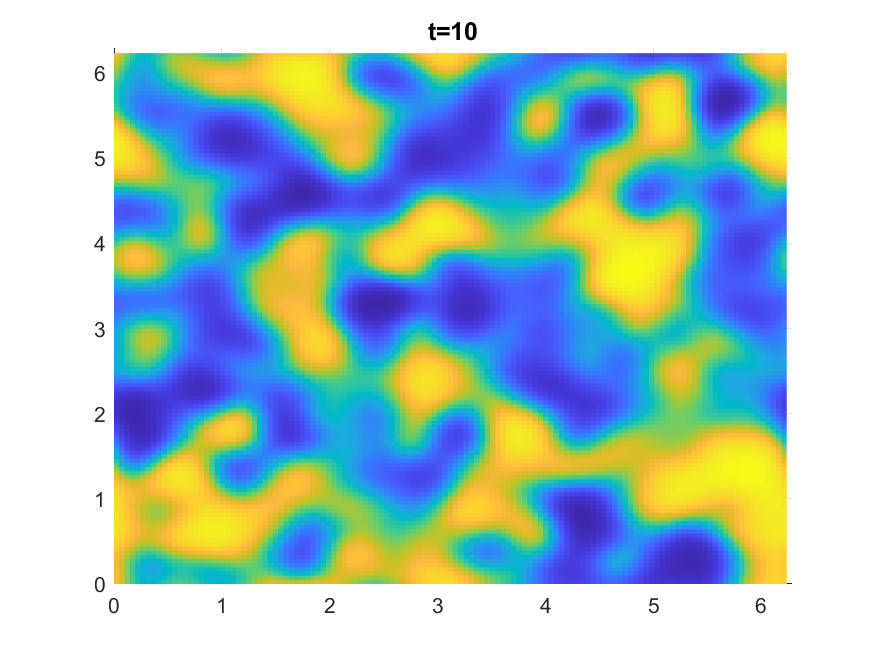}
		\includegraphics[width=0.32\textwidth]{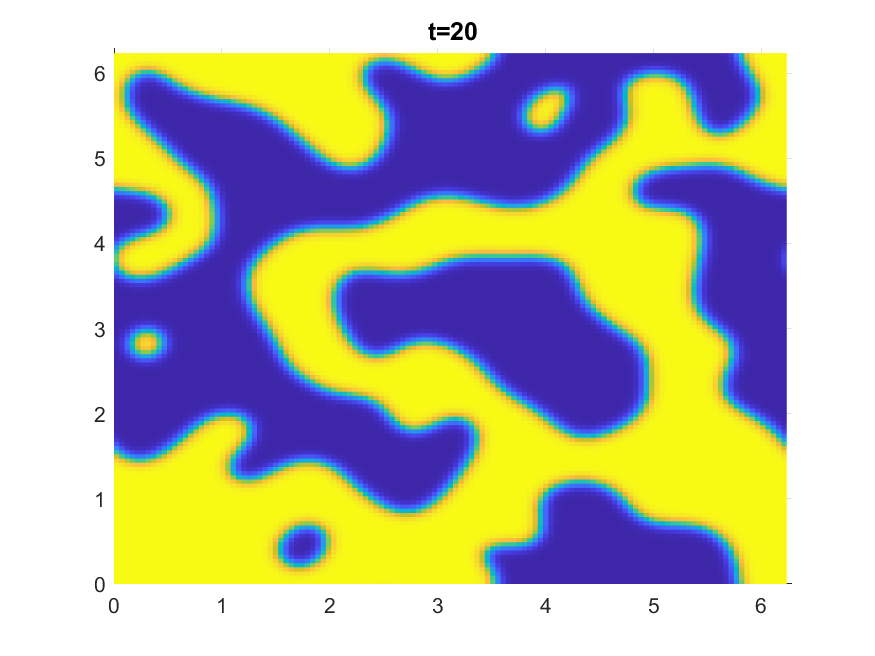}
		\includegraphics[width=0.32\textwidth]{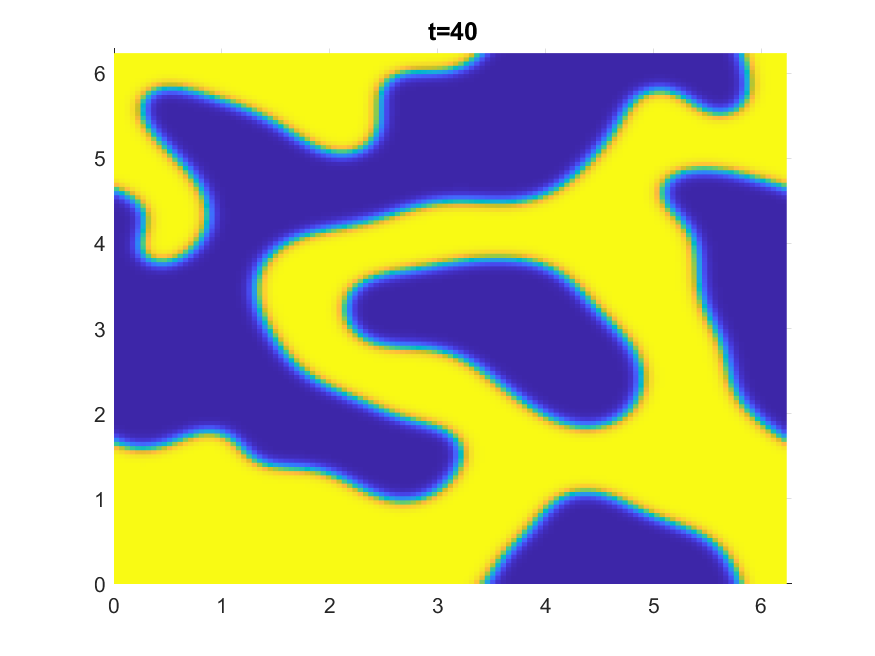}
		\includegraphics[width=0.32\textwidth]{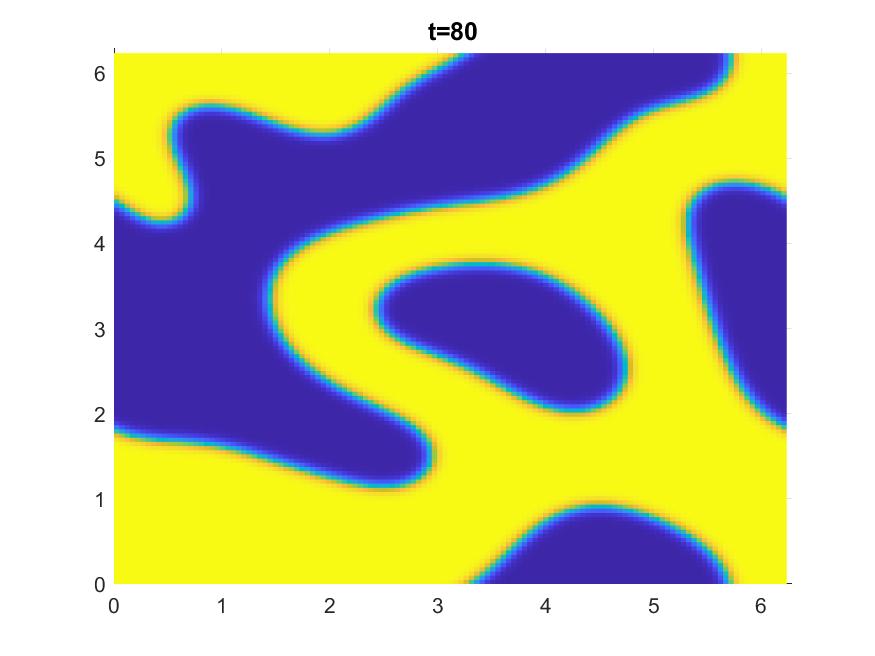}
		\includegraphics[width=0.32\textwidth]{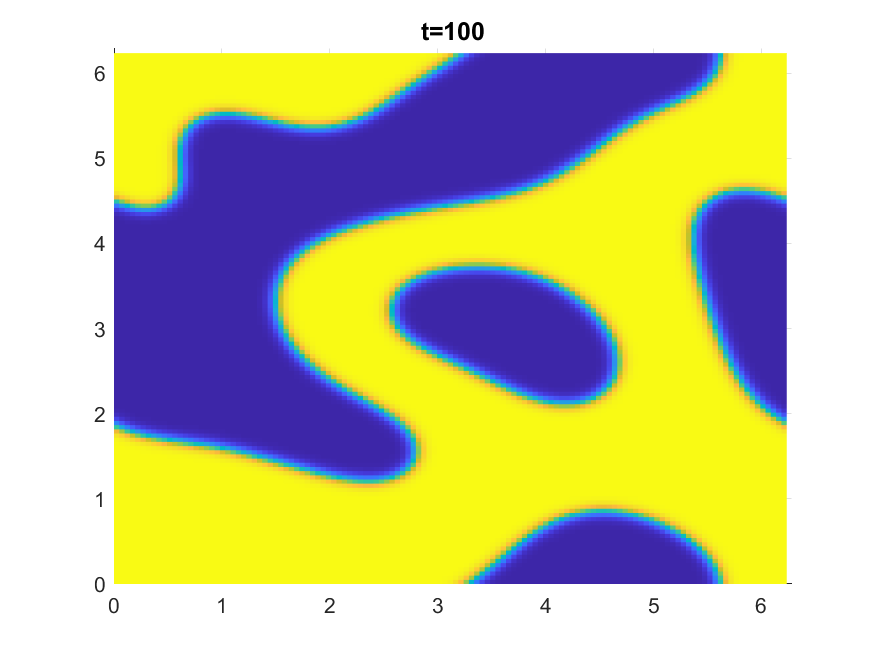}
		\caption{\label{F:Ex1_evolution}Solution snapshots for phase separation in the AC equation}
	\end{figure}

	\begin{example}\label{example2}
		{\rm Consider the Cahn-Hilliard (CH) equation}
		\[
		u_t + \Delta(\epsilon^2 \Delta u - u^3 + u) = 0, \quad (x, y) \in (0, 2\pi)^2, \quad t \in (0, 1].
		\]
	\end{example}
	
	Setting $\epsilon= 1$ and the initial condition $u_0 = 0.5\sin x \sin y$, we calculate $\mynorm{\gamma _n-1}_{\ell^{\infty}}$ and $\mynorm{G_n(1)}_{\ell^{\infty}}$ with the IMEX RRK method \eqref{imex rrk-sav} for different time step sizes. The results shown in Figures \ref{F:Ex2_Gamma} and \ref{F:Ex2_Sn} also validate the Lemma \ref{L:Sn(1)} and Theorem \ref{T:IMEXRRK gamma}, respectively. Since IDT and RT lead to the same conclusions for $\mynorm{\gamma _n-1}_{\ell^{\infty}}$ and $\mynorm{G_n(1)}_{\ell^{\infty}}$, 
	we omit the results regarding to the IDT method for the purpose of conciseness.
	Besides, we show the energy dissipation property of the IMEX RRK$(3,2)$ method. The tiem step size is $\tau=10^{-2}$ and the final time is $T_0=5$. Figure \ref{F:energy_ch} shows that the modified energy decays, which is consistent with our theoretical result.
	
	\begin{figure}
		\includegraphics[width=0.88\textwidth]{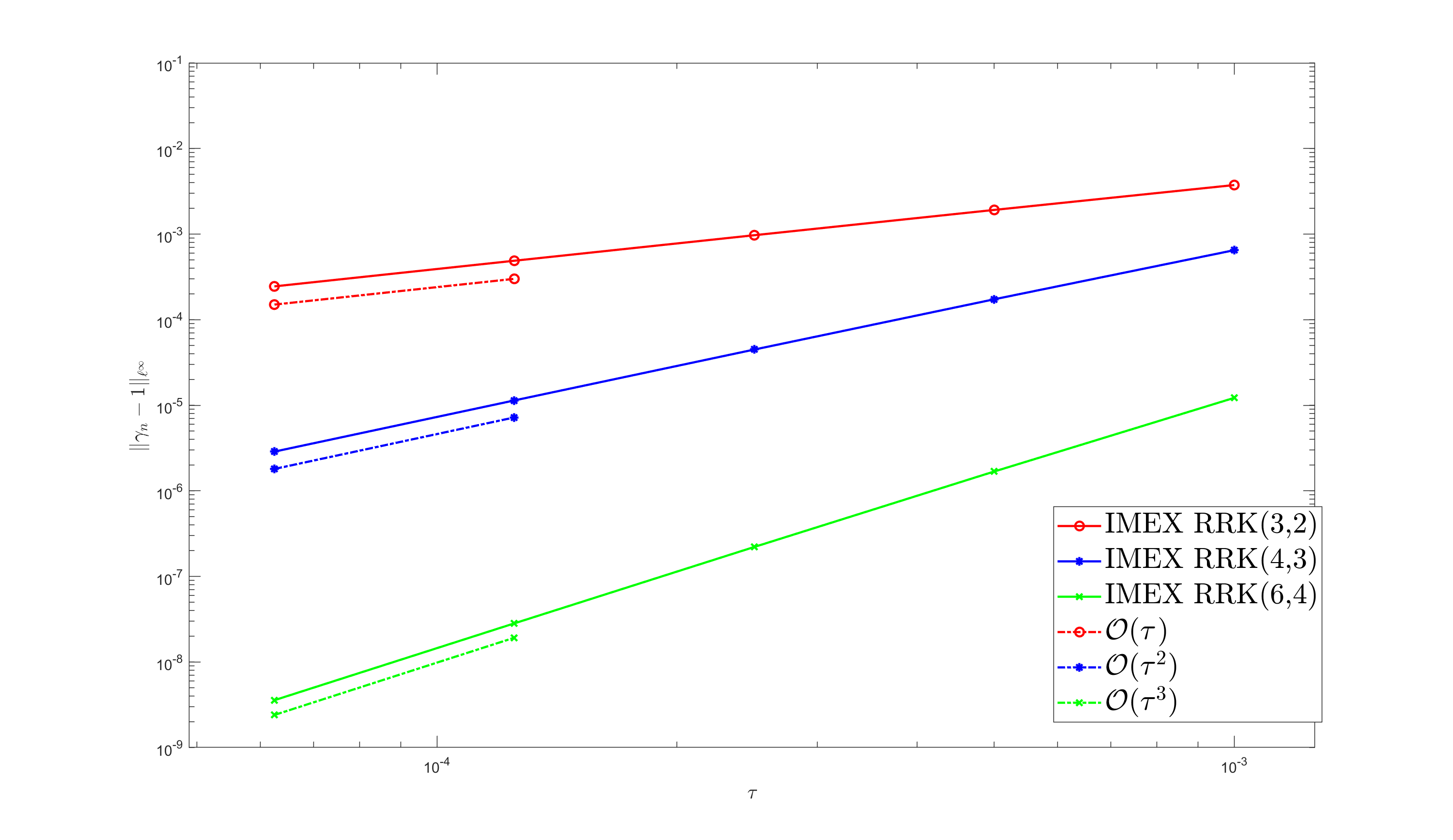}
		\caption{\label{F:Ex2_Gamma} $\mynorm{\gamma _n-1}_{\ell^{\infty}}$ for some relaxation (RT) methods (CH equation)}
	\end{figure}
	
	\begin{figure}
		\includegraphics[width=0.88\textwidth]{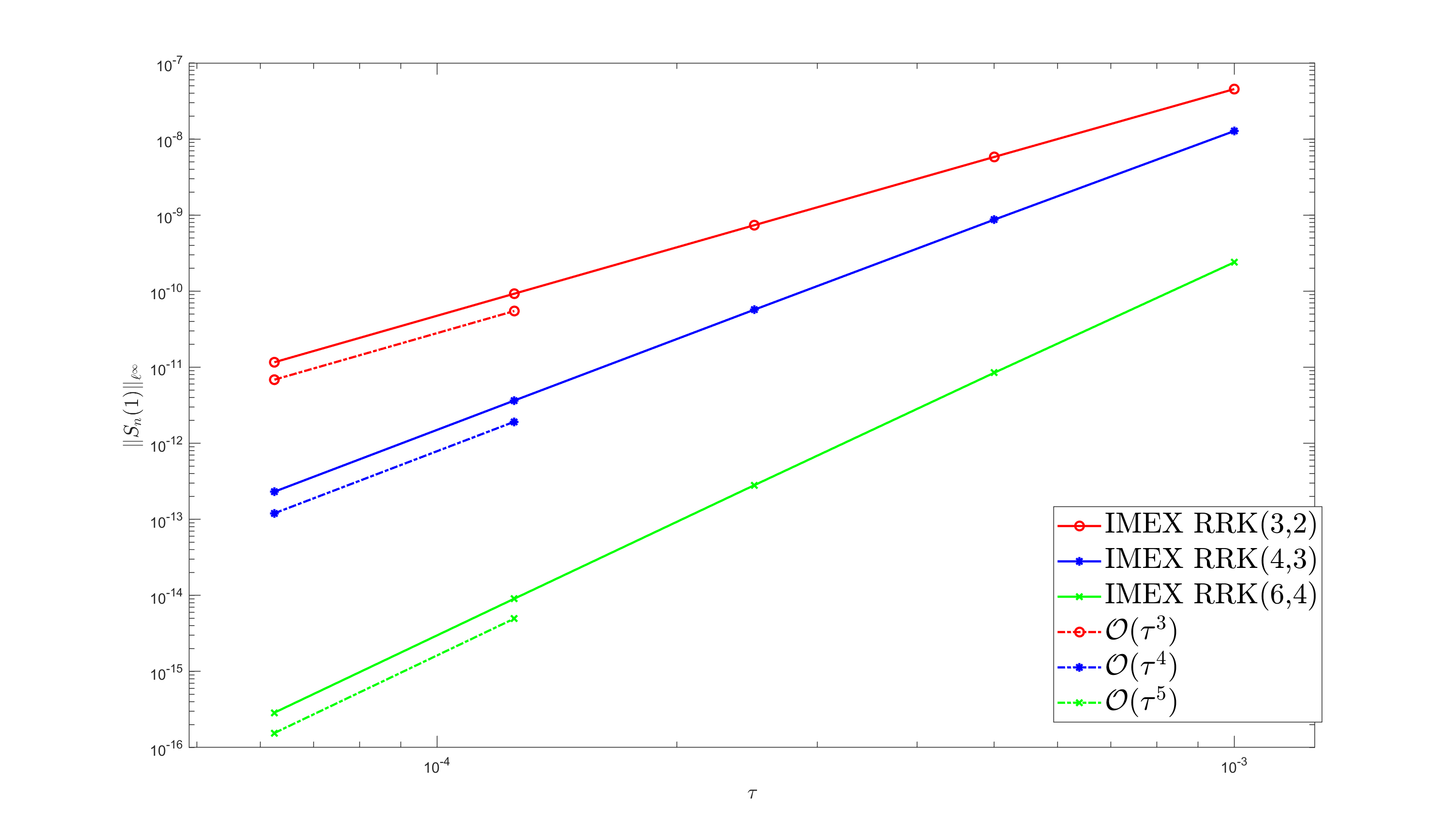}
		\caption{\label{F:Ex2_Sn} $\mynorm{G_n(1)}_{\ell^{\infty}}$ for some relaxation (RT) methods (CH equation)}
	\end{figure}
	
	\begin{figure}
		\includegraphics[width=0.88\textwidth]{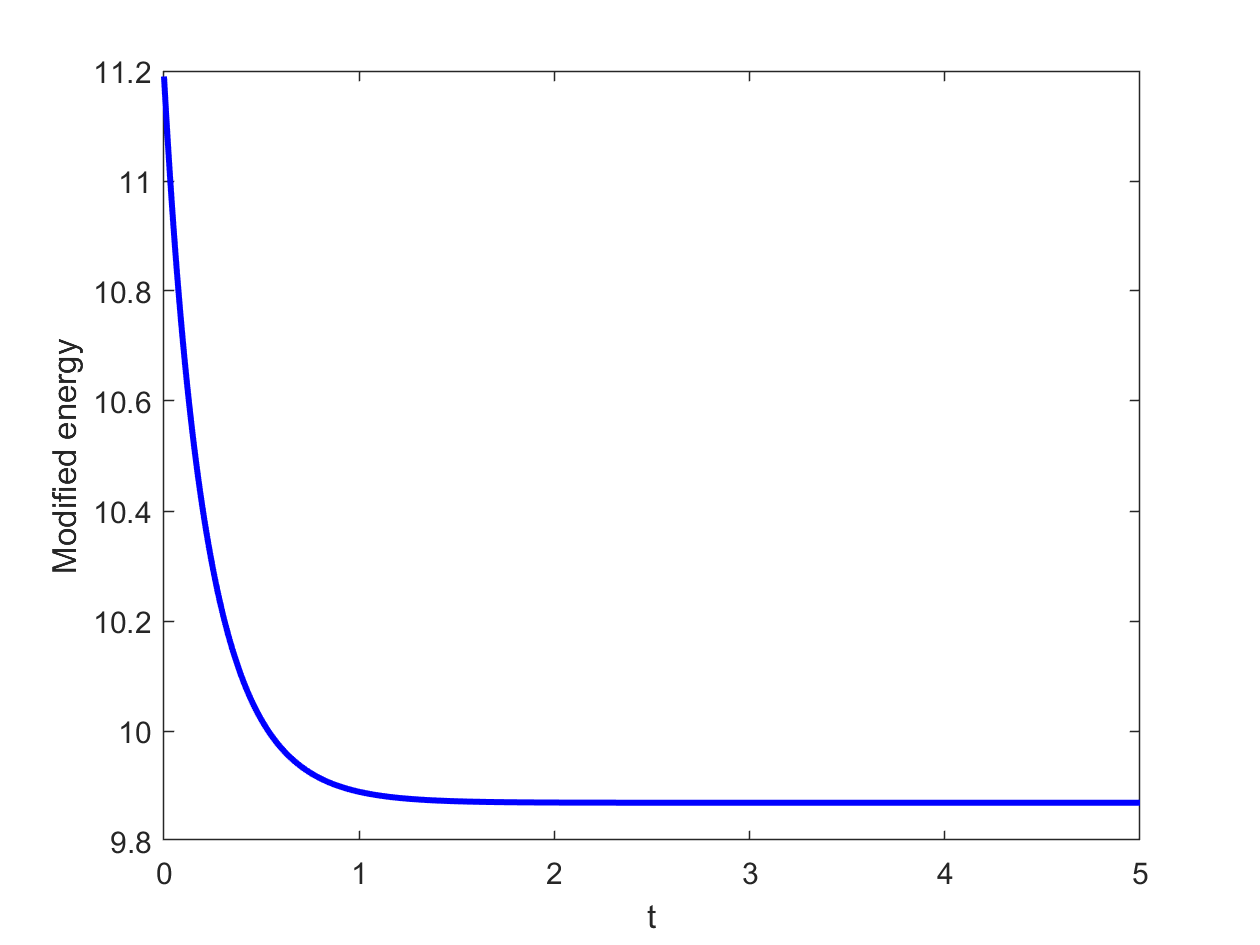}
		\caption{\label{F:energy_ch} Discrete energy evolution (CH equation)} 
	\end{figure} 

	Then, we also study the convergence orders of the approximation solution and compute a reference solution with a very small time step. 
	The $\ell^{\infty}$-norm at $T_0=1$ in Table \ref{Table:EX2 ch-error} shows the IDT methods 
	have $(p-1)$th order accuracy and the RT methods have $p$th order accuracy, 
	supporting the analytical conclusions in theorem \ref{T:truncation errors}.
	
	\begin{small}
		\begin{table}
			\caption{\label{Table:EX2 ch-error} $\ell^{\infty}$-norm errors and convergence orders (CH equation)}
			\begin{tabular}{|c|c|c|c|c|c|cc|cc|}
				\hline
				\multicolumn{2}{|c}{\multirow{2}*{$p$}} & \multicolumn{2}{|c}{\multirow{2}*{Method}} & \multicolumn{2}{|c|}{\multirow{2}*{$\tau$}} &  \multicolumn{2}{c|}{IDT} & \multicolumn{2}{c|}{RT}\\
				\multicolumn{2}{|c}{~} & \multicolumn{2}{|c}{~} & \multicolumn{2}{|c|}{~} & Error & Order & Error & Order\\
				\hline
				\multicolumn{2}{|c}{\multirow{4}*{2}} & \multicolumn{2}{|c}{\multirow{4}*{IMEX RRK(3,2)}} & \multicolumn{2}{|c|}{$1e^{-3}$} &  8.0228e-05 & - & 5.7195e-08  & - \\
				\multicolumn{2}{|c}{\multirow{4}*{~}} & \multicolumn{2}{|c}{\multirow{4}*{~}} & \multicolumn{2}{|c|}{$1e^{-3}/2$} &  4.0513e-05  & 0.9857 & 1.4954e-08 & 1.9354 \\
				\multicolumn{2}{|c}{\multirow{4}*{~}} & \multicolumn{2}{|c}{\multirow{4}*{~}} & \multicolumn{2}{|c|}{$1e^{-3}/4$} &  2.0048e-05  & 1.0149 & 3.8200e-09 & 1.9689 \\
				\multicolumn{2}{|c}{\multirow{4}*{~}} & \multicolumn{2}{|c}{\multirow{4}*{~}} & \multicolumn{2}{|c|}{$1e^{-3}/8$} &  9.6568e-06  & 1.0539 & 9.6136e-10 & 1.9904 \\
				\hline
				\multicolumn{2}{|c}{\multirow{4}*{3}} & \multicolumn{2}{|c}{\multirow{4}*{IMEX RRK(4,3)}} & \multicolumn{2}{|c|}{$1e^{-3}$} &  4.8085e-07  & - & 1.0307e-09 & - \\
				\multicolumn{2}{|c}{\multirow{4}*{~}} & \multicolumn{2}{|c}{\multirow{4}*{~}} & \multicolumn{2}{|c|}{$1e^{-3}/2$} &  1.0727e-07  & 2.1643 & 1.2426e-10  & 3.0523 \\
				\multicolumn{2}{|c}{\multirow{4}*{~}} & \multicolumn{2}{|c}{\multirow{4}*{~}} & \multicolumn{2}{|c|}{$1e^{-3}/4$} &  2.4985e-08  & 2.1022 & 1.5199e-11  & 3.0313 \\
				\multicolumn{2}{|c}{\multirow{4}*{~}} & \multicolumn{2}{|c}{\multirow{4}*{~}} & \multicolumn{2}{|c|}{$1e^{-3}/8$} &  5.9757e-09  & 2.0639 & 1.8767e-12  & 3.0177 \\
				\hline
				\multicolumn{2}{|c}{\multirow{4}*{4}} & \multicolumn{2}{|c}{\multirow{4}*{IMEX RRK(6,4)}} & \multicolumn{2}{|c|}{$8e^{-3}$} &  2.1762e-06  & - & 2.2700e-08  & - \\
				\multicolumn{2}{|c}{\multirow{4}*{~}} & \multicolumn{2}{|c}{\multirow{4}*{~}} & \multicolumn{2}{|c|}{$8e^{-3}/2$} &  2.9870e-07  & 2.8650 & 1.6035e-09  & 3.8234 \\
				\multicolumn{2}{|c}{\multirow{4}*{~}} & \multicolumn{2}{|c}{\multirow{4}*{~}} & \multicolumn{2}{|c|}{$8e^{-3}/4$} &  3.6762e-08  & 3.0224 & 1.0141e-10  & 3.9830 \\
				\multicolumn{2}{|c}{\multirow{4}*{~}} & \multicolumn{2}{|c}{\multirow{4}*{~}} & \multicolumn{2}{|c|}{$8e^{-3}/8$} &  4.4734e-09  & 3.0388 & 6.2989e-12  & 4.0089 \\
				\hline
			\end{tabular}
		\end{table}
	\end{small}
	
	Finally, we also simulate the phase separation processes using the IMEX RRK(3,2) method. 
	Let $u_0 = 0.4 \cdot \text{Rand}(x, y) + 0.25$, $\tau = 10^{-5}$ and $\epsilon=0.1$, with the results displayed in Figure \ref{F:Ex2_evolution}. As seen in the figures, we note that the distinct phases also gradually merge and reach a stable state.
	
	\begin{figure}
		\includegraphics[width=0.30\textwidth]{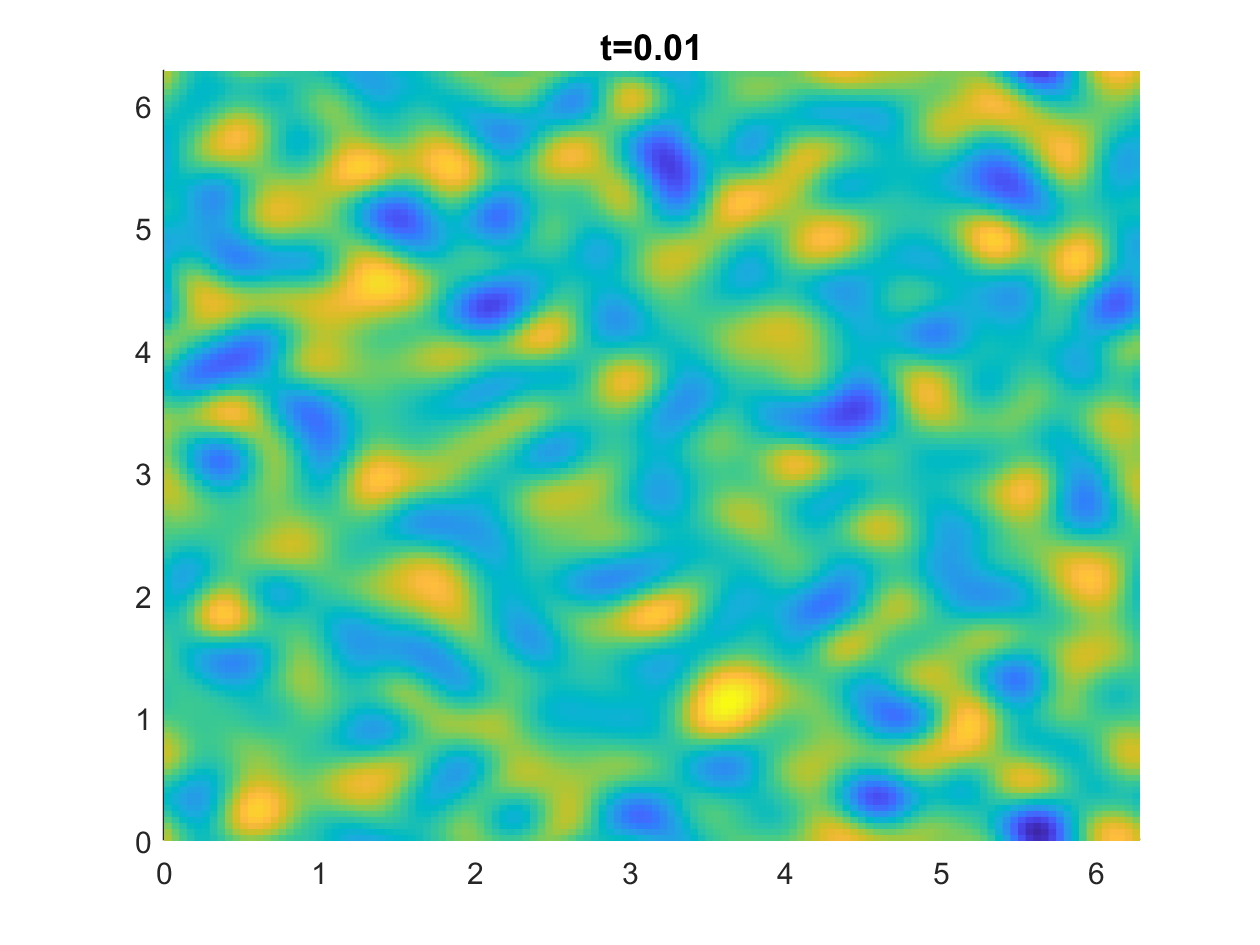}
		\includegraphics[width=0.30\textwidth]{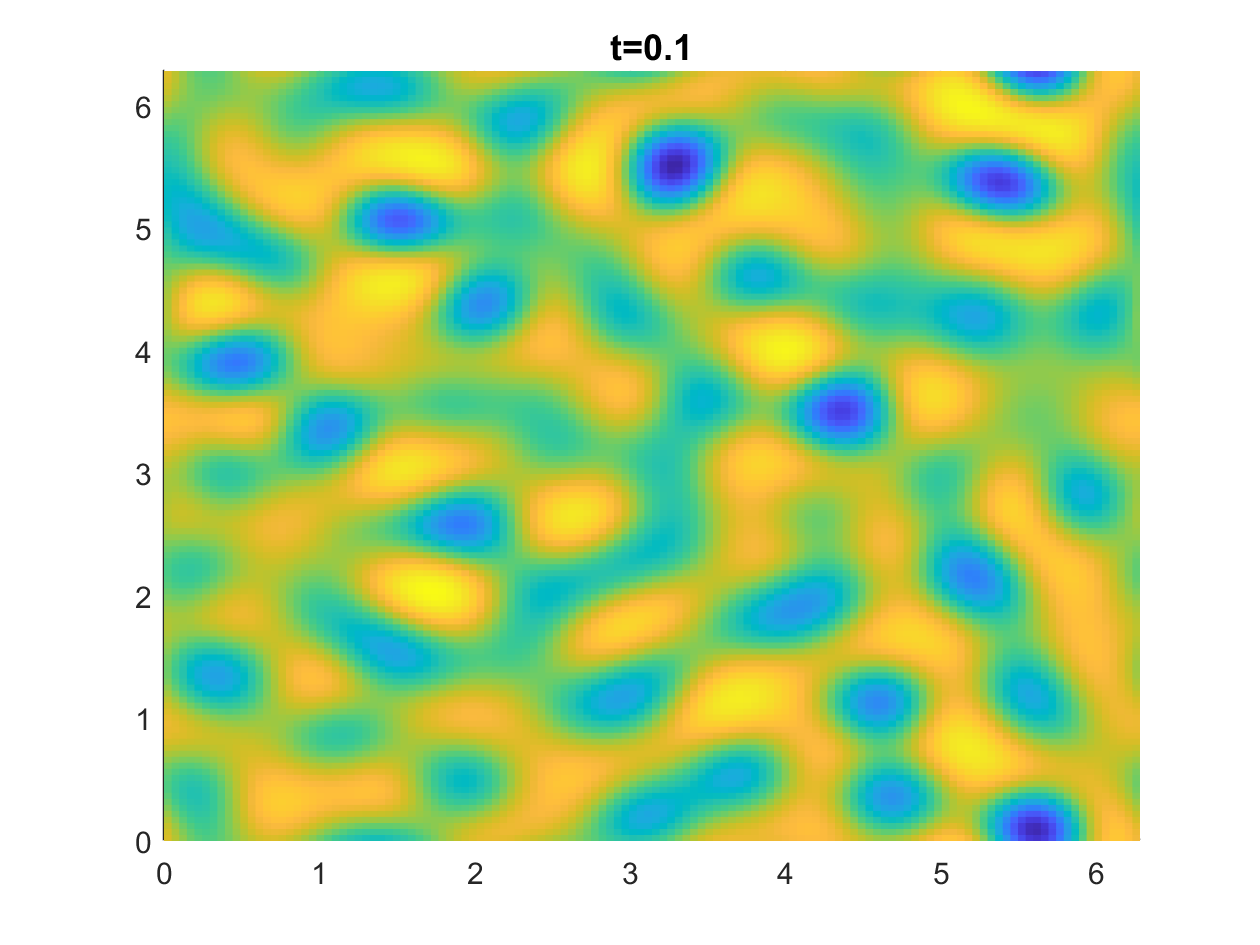}
		\includegraphics[width=0.30\textwidth]{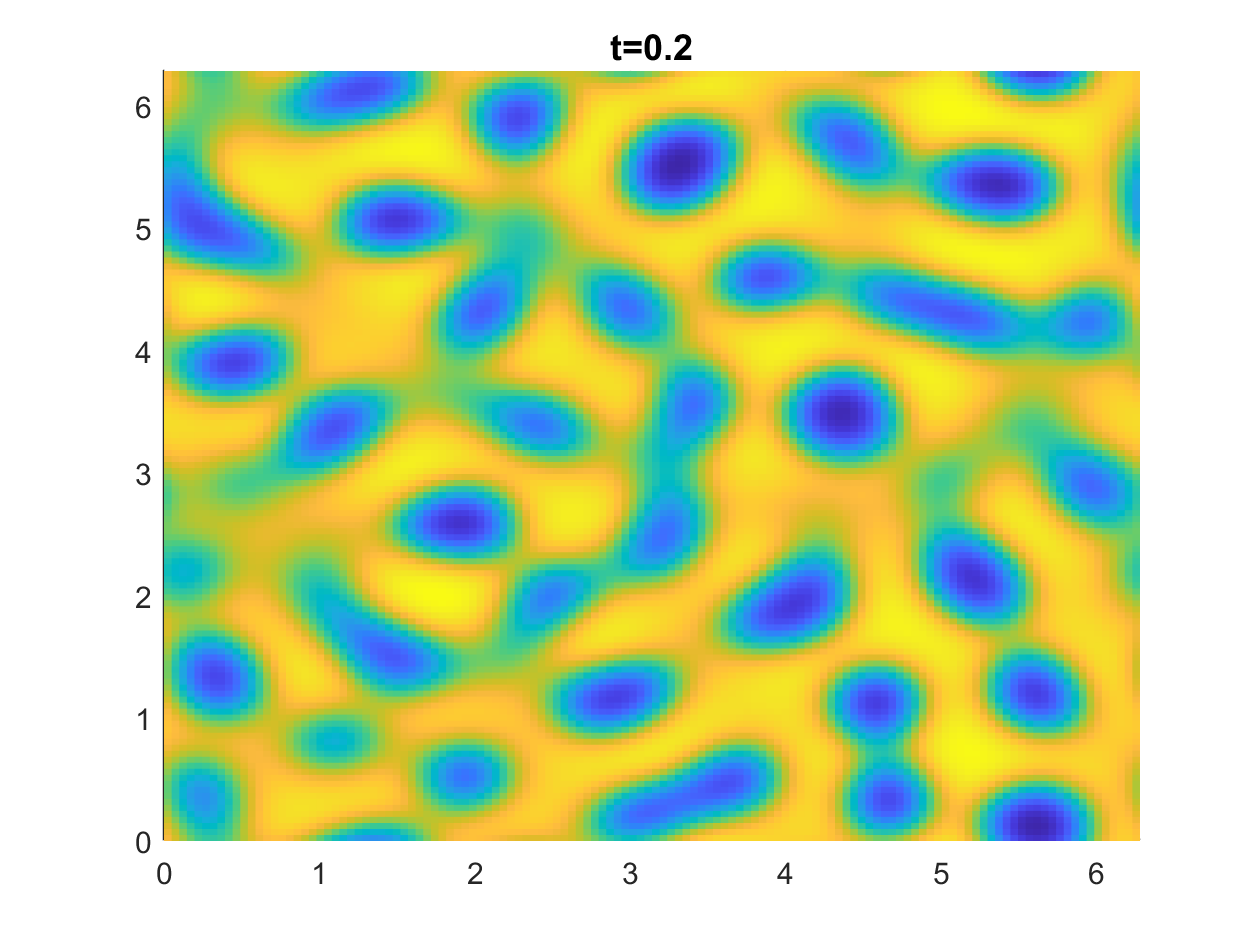}
		\includegraphics[width=0.30\textwidth]{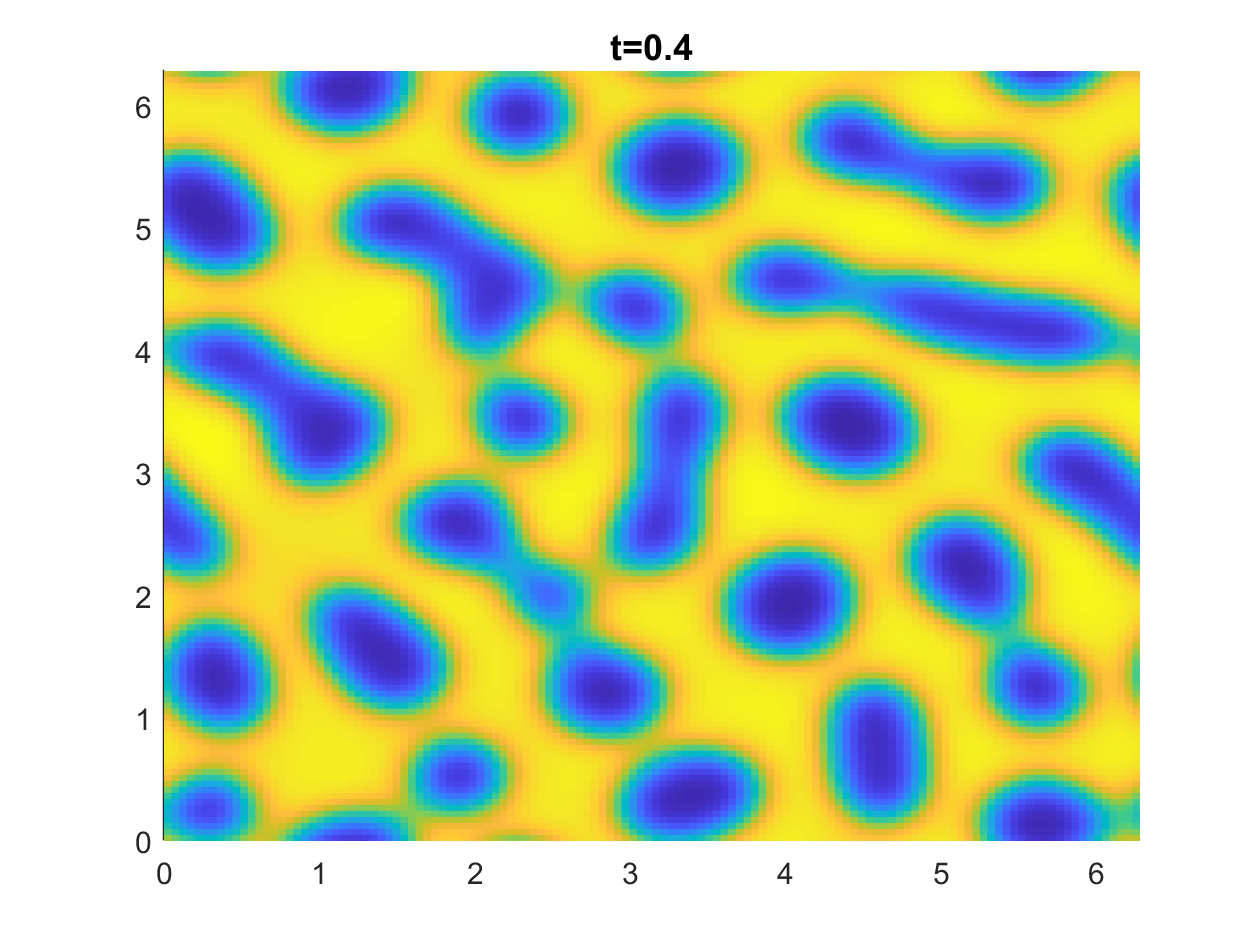}
		\includegraphics[width=0.30\textwidth]{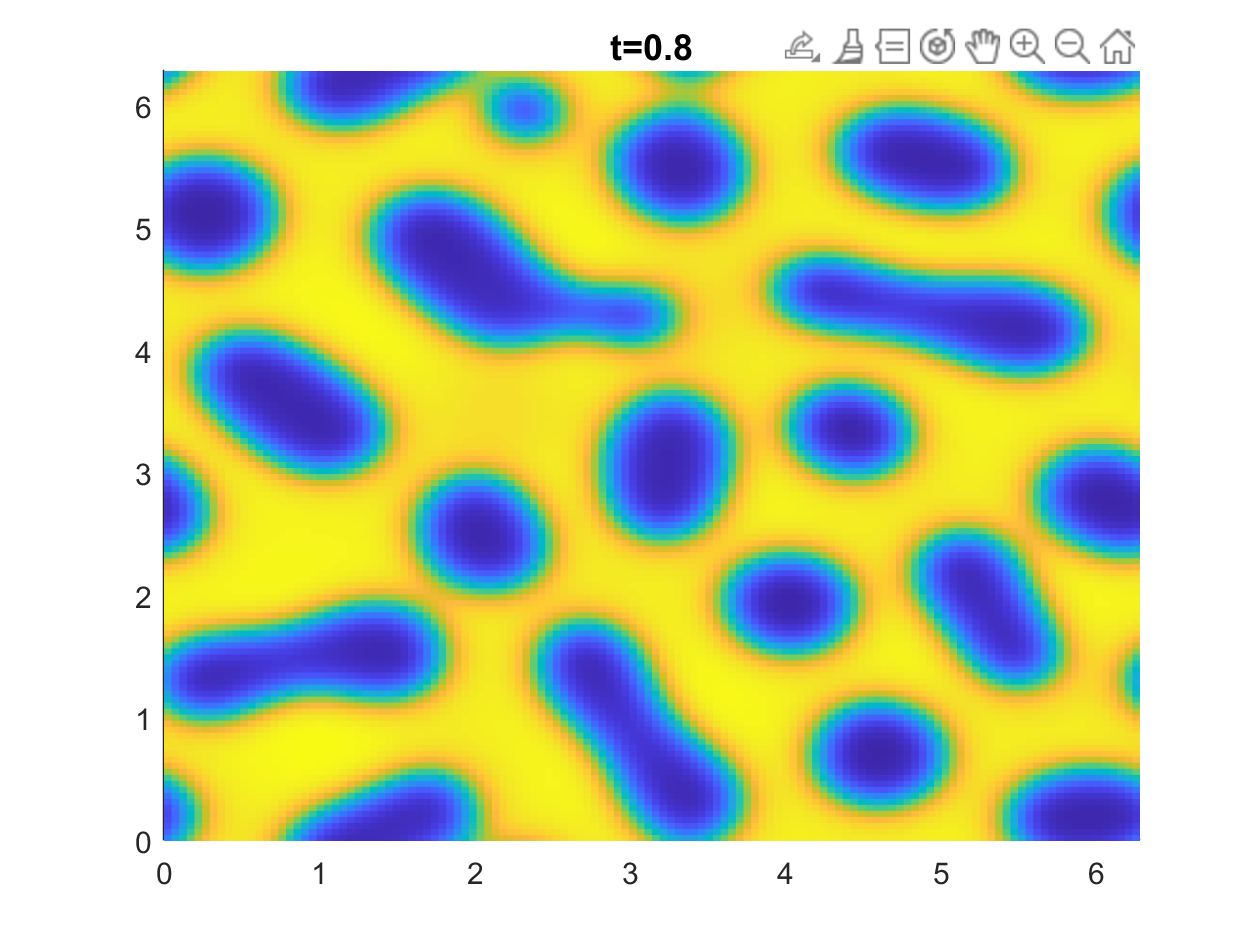}
		\includegraphics[width=0.30\textwidth]{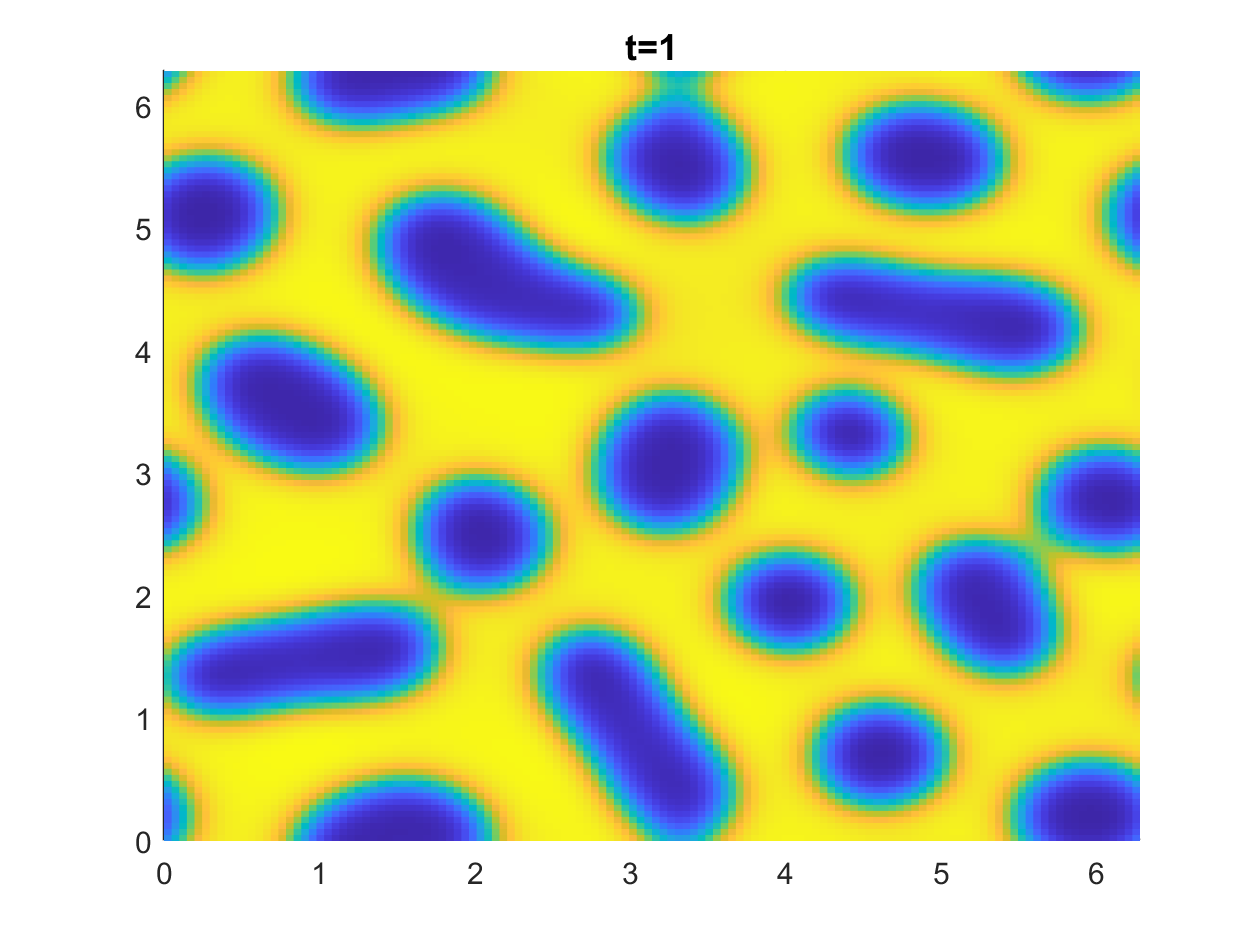}
		\caption{\label{F:Ex2_evolution} Solution snapshots for phase separation in the CH equation}
	\end{figure}

	\begin{example}\label{example3}
		{\rm \textbf{(Extension)} In this example, we consider the multi-component gradient flows, whose energy functional (cf. \cite{Shen2018scalar}) is defined as follows}
		\begin{align}
			{E}(u_1,u_2,\cdots ,u_k) = \sum_{\ell=1}^k \int_{\Omega} \braB{\frac{\epsilon^2}{2} |\nabla u_{\ell}|^2 + F(u_{\ell})} \,\zd \x.
		\end{align}
		{\rm where $u_{\ell}\,(\ell=1,2,\cdots  k)$ is the $\ell$-th phase field variable representing the volume fraction of the $\ell$-th component in the fluid mixture.}
	\end{example}
	
	Similarly, we introduce the scalar auxiliary variable $\tilde r(t) = \sqrt{\sum_{\ell=1}^k \int_{\Omega} F(u_{\ell}) \, \zd \x+\tilde C_0},$ 
	where $\tilde C_0$ is a non-negative constant to keep $\tilde r(t)$ being real. Then the $L^2$ gradient flow, formulated within the SAV framework, is expressed as
	\begin{align}
		\begin{cases}\label{vector-valued ac}
			(u_{\ell})_t = \mathcal{G} \braB{-\epsilon^2 \Delta u_{\ell} + \frac{\tilde r}{\sqrt{E_1 + \tilde C_0}} \frac{\delta E_1}{\delta u_{\ell}}}, \\
			\tilde r_t = \frac{1}{2\sqrt{E_1 + \tilde C_0}} \sum_{\ell=1}^k \myinnerb{\frac{\delta E_1}{\delta u_{\ell}}, (u_{\ell})_t},
		\end{cases}
	\end{align}
	where $E_1=\sum_{\ell=1}^k \int_{\Omega} F(u_{\ell}) \, \zd \x$ and the modified energy is defined as
	\begin{equation}
		E[u_1, \cdots, u_k, \tilde r] = \sum_{\ell=1}^k \frac{\epsilon^2}{2} \mynormb{\nabla u_{\ell}}^2 + \tilde r^2 - \tilde{C}_0.\\
	\end{equation}
	Therefore, by following the frame in Section 2, the formulation of the diagonally IMEX RRK method of \eqref{vector-valued ac} is as follows:
	
	\begin{equation}\label{imex rrk-sav-vector}
		\begin{cases}
			U_{\ell,ni} = u_{\ell,\gamma}^n + \tau \sum\limits_{j=1}^i a_{ij} L_{\ell,nj} + \tau \sum\limits_{j=1}^{i-1} \bar a_{ij} N_{\ell,nj}, \quad \ell=1,\cdots,k,\\
			R_{ni} = r_{\gamma}^n + \tau \sum\limits_{j=1}^{i-1} \bar a_{ij}\tilde{N}_{nj}, \quad i=1,\cdots,s,\\
			u_{\ell,\gamma}^{n+1} = u_{\ell,\gamma}^n + \gamma_n\tau \sum\limits_{i=1}^s b_i L_{\ell,ni} + \gamma_n\tau \sum\limits_{i=1}^s \bar b_i N_{\ell,ni}, \quad \ell=1,\cdots,k,\\
			r_{\gamma}^{n+1} = r_{\gamma}^n + \gamma_n\tau \sum\limits_{i=1}^s \bar b_i\tilde{N}_{ni},
		\end{cases}
	\end{equation}
	where $L_{\ell,nj} = L_{\ell}(U_{\ell,nj})$, $N_{\ell,nj} = N_{\ell}(U_{1,nj},\cdots,U_{k,nj},R_{nj})$ and $\tilde N_{nj}=\tilde N(U_{1,nj},\cdots,U_{k,nj},R_{nj})$ for $j=1,\dots,s$. Let
	
	\begin{align}\label{denominator_ne_mu}
		\sum_{\ell=1}^k \dfrac{\epsilon^2}{2} \mynormB{\nabla \braB{\sum\limits_{i=1}^s  b_i L_{\ell,ni}+ \sum\limits_{i=1}^s \bar b_i N_{\ell,ni}}}^2 + \braB{\sum\limits_{i=1}^s \bar b_i\tilde{N}_{ni}}^2 \ne 0,
	\end{align}
	then $\gamma_n$ satisfies
	
	\begin{align}\label{Def_gamma_n_mu}
		\gamma_n =
		\begin{cases}
			\frac{\sum\limits_{i=1}^s \kbrabg{\sum\limits_{\ell=1}^k \epsilon^2 \myinnerB{u_{\ell,\gamma}^n-U_{\ell,ni}, \Delta\bra{b_i L_{\ell,ni}+ \bar b_i N_{\ell,ni}}} -2(r_{\gamma}^n-R_{ni}) \bar b_i\tilde{N}_{ni}}} {\tau \kbrabg{\sum\limits_{\ell=1}^k \frac{\epsilon^2}{2} \mynormB{\nabla \braB{\sum\limits_{i=1}^s  b_i L_{\ell,ni}+ \sum\limits_{i=1}^s \bar b_i N_{\ell,ni}}}^2 + \braB{\sum\limits_{i=1}^s \bar b_i\tilde{N}_{ni}}^2}}, \, &\text{if } \eqref{denominator_ne_mu} \text{ holds},\\
			1, \, &\text{else}.
		\end{cases}
	\end{align}
	
	One can easily verify that the results
	with respect to the energy dissipation law and convergence orders presented in Sections 2 and 3 are also hold for \eqref{imex rrk-sav-vector} and \eqref{Def_gamma_n_mu}. 
	To validate the proposition, we examine the vector-valued Allen-Cahn equations ($\mathcal{G}=-\mathcal{I}$) as a specific example.

	First of all, we investigate the relaxation coeffcient $\gamma_n$ by computing $\mynorm{\gamma _n-1}_{\ell^{\infty}}$ and $\mynorm{G_n(1)}_{\ell^{\infty}}$ for various time steps. Here, we set $F(u_{\ell})=\frac{1}{4}u_{\ell}^2(1-u_{\ell})^2$, $\ell =3$ (namely 3 phase variables), $\epsilon= 0.01$, $\Omega = (-0.5, 0.5)^2$ and 
	the initial conditions are provided as
	$$u_1(x, y, 0) = u_2(x, y, 0) = 0.5 \cos(\pi x) \cos(\pi y),\, u_3(x, y, 0) = 1 - u_1(x, y, 0) - u_2(x, y, 0).$$ The results are presented in Figures \ref{F:Ex3_Gamma} and \ref{F:Ex3_Sn}, demonstrating that the convergence orders match the theoretical results.
	Besides, we also illustrate the energy stability by using the IMEX RRK$(3,2)$ method. The tiem step size is $\tau=10^{-2}$ and the final time is $T_0=20$. Figure \ref{F:energy_vac} shows that the modified energy preserve dissipation property.
	
	\begin{figure}
		\includegraphics[width=0.88\textwidth]{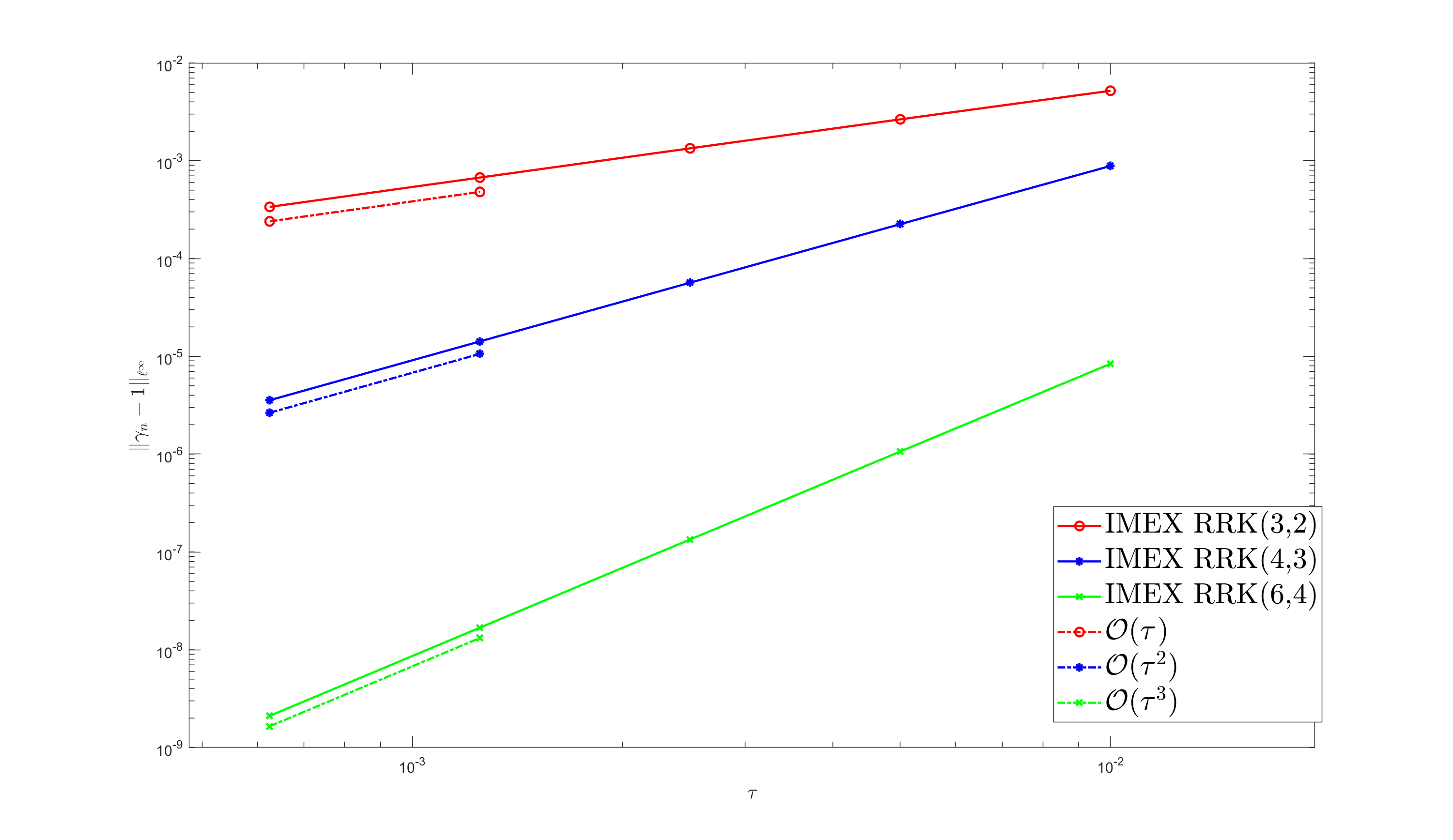}
		\caption{\label{F:Ex3_Gamma} $\mynorm{\gamma _n-1}_{\ell^{\infty}}$ for some relaxation (RT) methods (vector-valued AC equations)}
	\end{figure}
	
	\begin{figure}
		\includegraphics[width=0.88\textwidth]{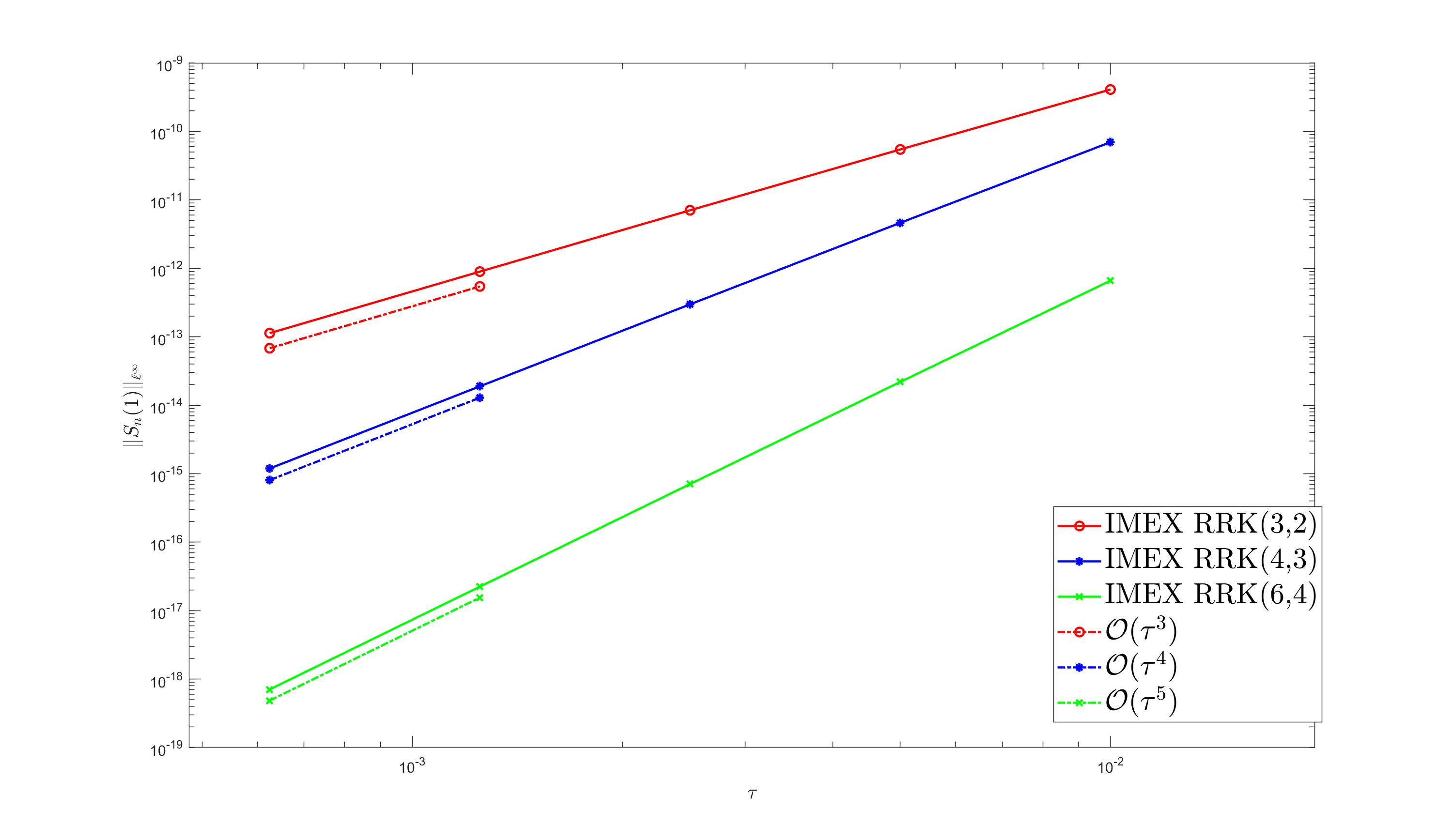}
		\caption{\label{F:Ex3_Sn} $\mynorm{G_n(1)}_{\ell^{\infty}}$ for some relaxation (RT) methods (vector-valued AC equations)}
	\end{figure}
	
	\begin{figure}
		\includegraphics[width=0.88\textwidth]{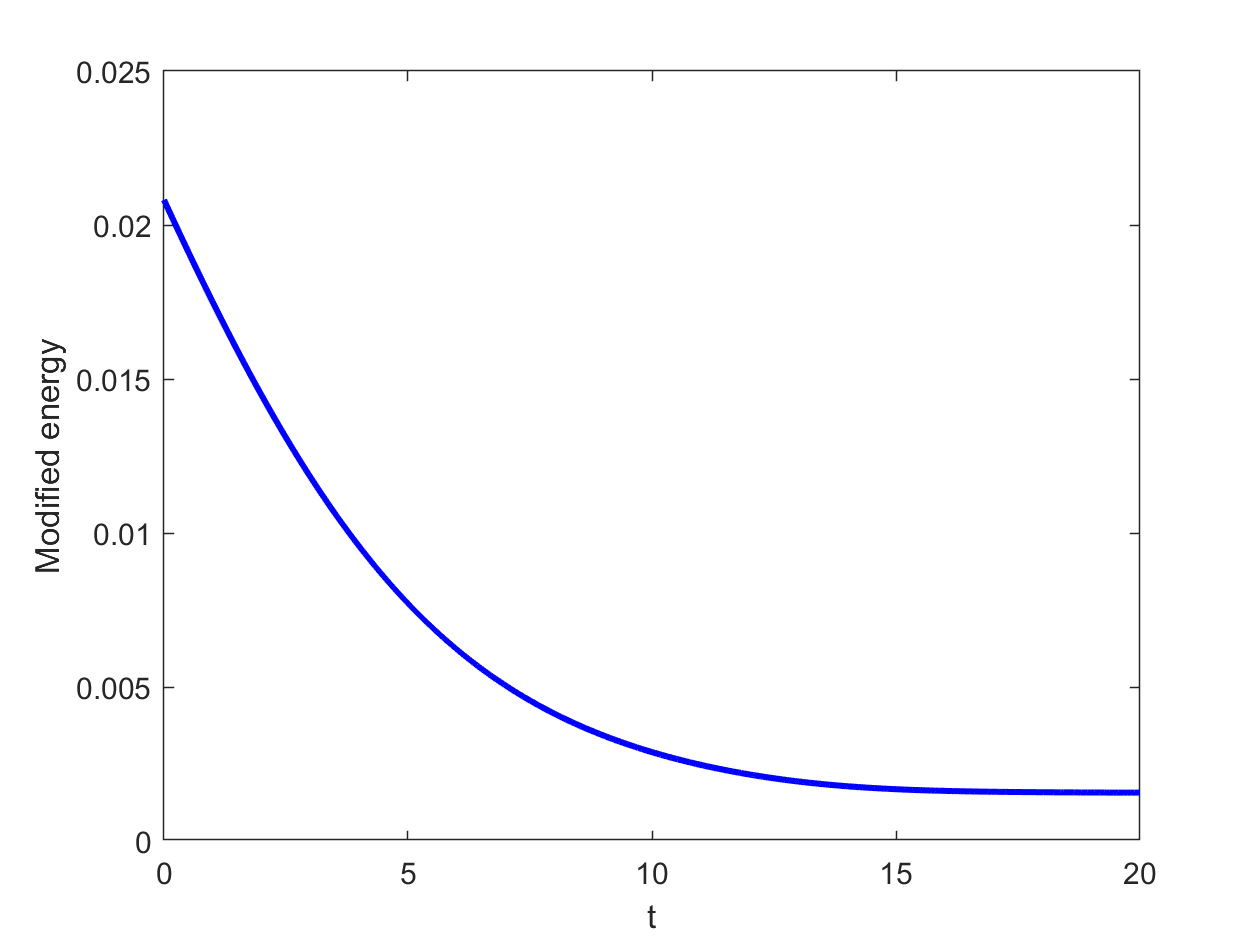}
		\caption{\label{F:energy_vac} Discrete energy evolution (vector-valued AC equations)} 
	\end{figure} 

	Next, we test the order of convergence at $T_0=1$ and use reference solutions computed with $\tau = 10^{-4}$. 
	Tables \ref{Table:EX3 ac-error} and \ref{Table:EX3 ac-error1} demonstrate that the results for all phase variables align with those in Examples 4.1 and 4.2, 
	namely the IDT methods have convergence orders $p-1$ while the RT methods achieve orders $p$,
	thereby reinforcing the analytical results.

	\begin{small}
		\begin{table}
			\caption{\label{Table:EX3 ac-error} $\ell^{\infty}$-norm errors and convergence orders for $u_1$/$u_2$ (vector-valued AC equations)}
			\begin{tabular}{|c|c|c|c|c|c|cc|cc|}
				\hline
				\multicolumn{2}{|c}{\multirow{2}*{$p$}} & \multicolumn{2}{|c}{\multirow{2}*{Method}} & \multicolumn{2}{|c|}{\multirow{2}*{$\tau$}} &  \multicolumn{2}{c|}{IDT} & \multicolumn{2}{c|}{RT}\\
				\multicolumn{2}{|c}{~} & \multicolumn{2}{|c}{~} & \multicolumn{2}{|c|}{~} & Error & Order & Error & Order\\
				\hline
				\multicolumn{2}{|c}{\multirow{4}*{2}} & \multicolumn{2}{|c}{\multirow{4}*{IMEX RRK(3,2)}} & \multicolumn{2}{|c|}{$1/10$} &  2.2141e-04 & - & 4.5259e-06  & - \\
				\multicolumn{2}{|c}{\multirow{4}*{~}} & \multicolumn{2}{|c}{\multirow{4}*{~}} & \multicolumn{2}{|c|}{$1/20$} &  1.1258e-04  & 0.9757 & 1.1108e-06 & 2.0267 \\
				\multicolumn{2}{|c}{\multirow{4}*{~}} & \multicolumn{2}{|c}{\multirow{4}*{~}} & \multicolumn{2}{|c|}{$1/40$} &  5.7390e-05  & 0.9721 & 2.7608e-07 & 2.0084 \\
				\multicolumn{2}{|c}{\multirow{4}*{~}} & \multicolumn{2}{|c}{\multirow{4}*{~}} & \multicolumn{2}{|c|}{$1/80$} &  2.8940e-05  & 0.9877 & 6.8843e-08 & 2.0037 \\
				\hline
				\multicolumn{2}{|c}{\multirow{4}*{3}} & \multicolumn{2}{|c}{\multirow{4}*{IMEX RRK(4,3)}} & \multicolumn{2}{|c|}{$1/10$} &  3.8443e-04  & - & 2.6736e-06 & - \\
				\multicolumn{2}{|c}{\multirow{4}*{~}} & \multicolumn{2}{|c}{\multirow{4}*{~}} & \multicolumn{2}{|c|}{$1/20$} &  8.9946e-05  & 2.0956 & 3.2987e-07  & 3.0188 \\
				\multicolumn{2}{|c}{\multirow{4}*{~}} & \multicolumn{2}{|c}{\multirow{4}*{~}} & \multicolumn{2}{|c|}{$1/40$} &  2.1791e-05  & 2.0454 & 4.0671e-08  & 3.0198 \\
				\multicolumn{2}{|c}{\multirow{4}*{~}} & \multicolumn{2}{|c}{\multirow{4}*{~}} & \multicolumn{2}{|c|}{$1/80$} &  5.3715e-06  & 2.0203 & 5.0451e-09  & 3.0111 \\
				\hline
				\multicolumn{2}{|c}{\multirow{4}*{4}} & \multicolumn{2}{|c}{\multirow{4}*{IMEX RRK(6,4)}} & \multicolumn{2}{|c|}{$1/10$} &  3.2952e-05  & - & 2.4396e-07  & - \\
				\multicolumn{2}{|c}{\multirow{4}*{~}} & \multicolumn{2}{|c}{\multirow{4}*{~}} & \multicolumn{2}{|c|}{$1/20$} &  3.8503e-06  & 3.0973 & 1.4365e-08  & 4.0861 \\
				\multicolumn{2}{|c}{\multirow{4}*{~}} & \multicolumn{2}{|c}{\multirow{4}*{~}} & \multicolumn{2}{|c|}{$1/40$} &  4.6280e-07  & 3.0565 & 8.6497e-10  & 4.0537 \\
				\multicolumn{2}{|c}{\multirow{4}*{~}} & \multicolumn{2}{|c}{\multirow{4}*{~}} & \multicolumn{2}{|c|}{$1/80$} &  5.6659e-08  & 3.0300 & 5.2984e-11  & 4.0290 \\
				\hline
			\end{tabular}
		\end{table}
	\end{small}

	\begin{small}
		\begin{table}
			\caption{\label{Table:EX3 ac-error1} $\ell^{\infty}$-norm errors and convergence orders for $u_3$ (vector-valued AC equations)}
			\begin{tabular}{|c|c|c|c|c|c|cc|cc|}
				\hline
				\multicolumn{2}{|c}{\multirow{2}*{$p$}} & \multicolumn{2}{|c}{\multirow{2}*{Method}} & \multicolumn{2}{|c|}{\multirow{2}*{$\tau$}} &  \multicolumn{2}{c|}{IDT} & \multicolumn{2}{c|}{RT}\\
				\multicolumn{2}{|c}{~} & \multicolumn{2}{|c}{~} & \multicolumn{2}{|c|}{~} & Error & Order & Error & Order\\
				\hline
				\multicolumn{2}{|c}{\multirow{4}*{2}} & \multicolumn{2}{|c}{\multirow{4}*{IMEX RRK(3,2)}} & \multicolumn{2}{|c|}{$1/10$} &  2.1948e-04 & - & 9.0905e-06  & - \\
				\multicolumn{2}{|c}{\multirow{4}*{~}} & \multicolumn{2}{|c}{\multirow{4}*{~}} & \multicolumn{2}{|c|}{$1/20$} &  1.1158e-04  & 0.9760 & 2.2375e-06 & 2.0225 \\
				\multicolumn{2}{|c}{\multirow{4}*{~}} & \multicolumn{2}{|c}{\multirow{4}*{~}} & \multicolumn{2}{|c|}{$1/40$} &  5.6870e-05  & 0.9723 & 5.5713e-07 & 2.0058 \\
				\multicolumn{2}{|c}{\multirow{4}*{~}} & \multicolumn{2}{|c}{\multirow{4}*{~}} & \multicolumn{2}{|c|}{$1/80$} &  2.8677e-05  & 0.9878 & 1.3907e-07 & 2.0022 \\
				\hline
				\multicolumn{2}{|c}{\multirow{4}*{3}} & \multicolumn{2}{|c}{\multirow{4}*{IMEX RRK(4,3)}} & \multicolumn{2}{|c|}{$1/10$} &  3.8108e-04  & - & 4.6089e-06 & - \\
				\multicolumn{2}{|c}{\multirow{4}*{~}} & \multicolumn{2}{|c}{\multirow{4}*{~}} & \multicolumn{2}{|c|}{$1/20$} &  8.9143e-05  & 2.0959 & 5.4450e-07  & 3.0814 \\
				\multicolumn{2}{|c}{\multirow{4}*{~}} & \multicolumn{2}{|c}{\multirow{4}*{~}} & \multicolumn{2}{|c|}{$1/40$} &  2.1593e-05  & 2.0455 & 6.5564e-08  & 3.0540 \\
				\multicolumn{2}{|c}{\multirow{4}*{~}} & \multicolumn{2}{|c}{\multirow{4}*{~}} & \multicolumn{2}{|c|}{$1/80$} &  5.3226e-06  & 2.0204 & 8.0352e-09  & 3.0285 \\
				\hline
				\multicolumn{2}{|c}{\multirow{4}*{4}} & \multicolumn{2}{|c}{\multirow{4}*{IMEX RRK(6,4)}} & \multicolumn{2}{|c|}{$1/10$} &  3.2664e-05  & - & 4.4150e-07  & - \\
				\multicolumn{2}{|c}{\multirow{4}*{~}} & \multicolumn{2}{|c}{\multirow{4}*{~}} & \multicolumn{2}{|c|}{$1/20$} &  3.8159e-06  & 3.0976 & 2.5151e-08  & 4.1337 \\
				\multicolumn{2}{|c}{\multirow{4}*{~}} & \multicolumn{2}{|c}{\multirow{4}*{~}} & \multicolumn{2}{|c|}{$1/40$} &  4.5861e-07  & 3.0567 & 1.4901e-09  & 4.0772 \\
				\multicolumn{2}{|c}{\multirow{4}*{~}} & \multicolumn{2}{|c}{\multirow{4}*{~}} & \multicolumn{2}{|c|}{$1/80$} &  5.6143e-08  & 3.0301 & 9.0550e-11  & 4.0405 \\
				\hline
			\end{tabular}
		\end{table}
	\end{small}
	
	Finally, we also simulate the evolution process using the IMEX RRK(3,2) method. In this case, we set the initial conditions
	\begin{align*}
		&u_{\ell}(x, y, 0) = \frac12 \braB{ 1 + \tanh \braB{\frac{r_1 - \sqrt{(x - x_{\ell})^2 + (y - y_{\ell})^2}}{\epsilon}}},\, \ell = 1, \,2,\\
		&u_3(x, y, 0) = 1 - u_1(x, y, 0) - u_2(x, y, 0),
	\end{align*}
	where $\epsilon=0.025$, $r_1=0.25$, $x_1 = 1.26$, $x_2 = 0.74$ and $y_1 = y_2 = 0.5$. Here, $u_1$ and $u_2$ are two closely circles with the same radius. Figure \ref{F:Ex3_evolution} shows snapshots of $u_1 + 2u_2$ at $t = 1, 10, 20, 40$ for $\tau=0.01$ and $\tau=0.1$. Both simulations show similar evolution with the circles merging together, which are agree with the findings of Example 2 in reference \cite{LiLi2024Unconditional}.
	
	\begin{figure}
		\includegraphics[width=0.24\textwidth]{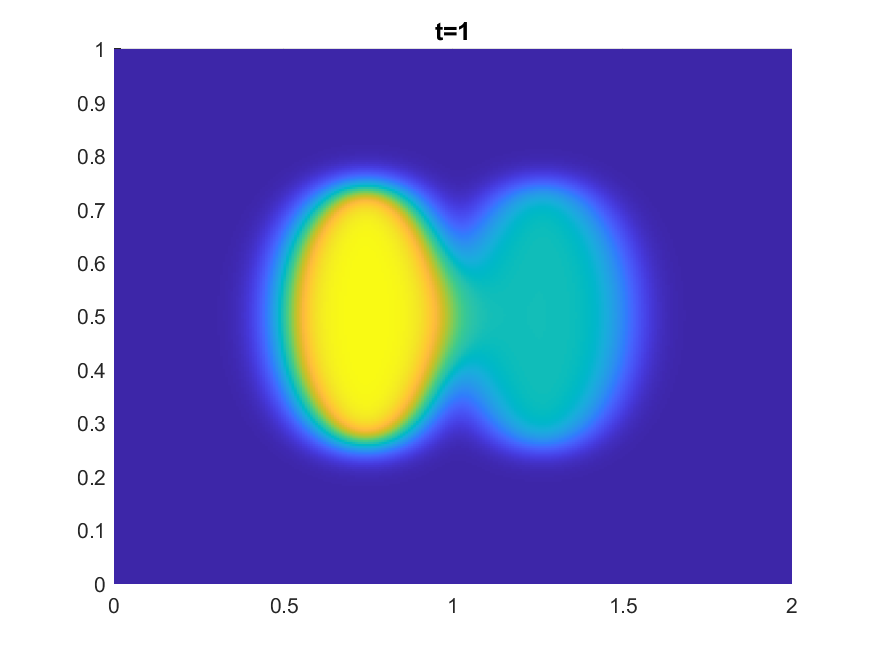}
		\includegraphics[width=0.24\textwidth]{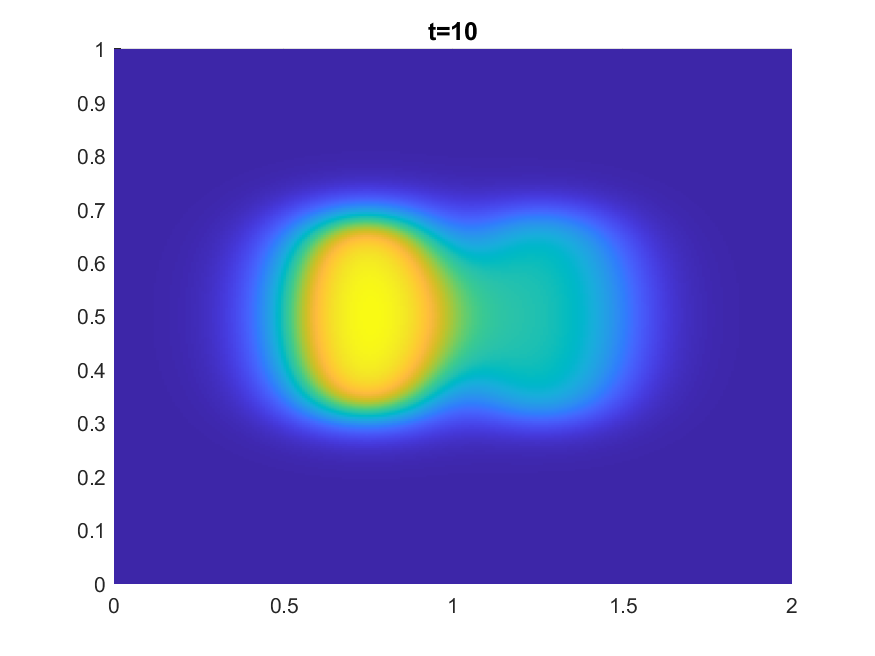}
		\includegraphics[width=0.24\textwidth]{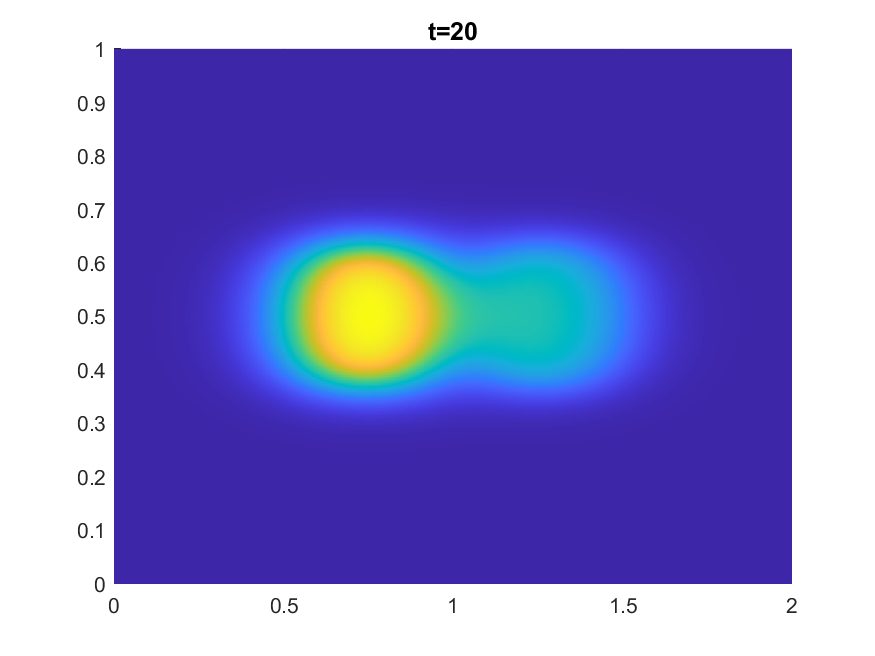}
		\includegraphics[width=0.24\textwidth]{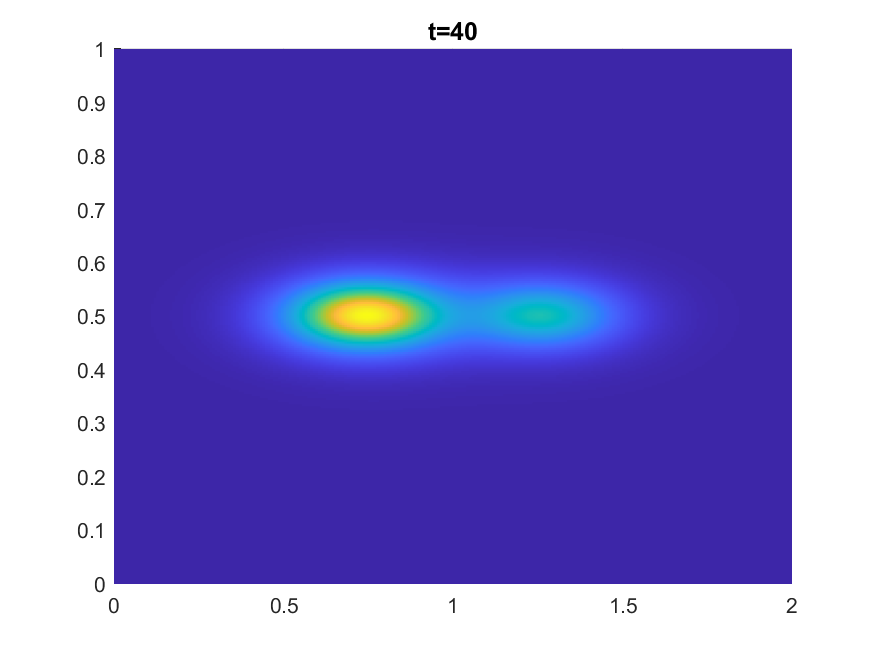}
		\includegraphics[width=0.24\textwidth]{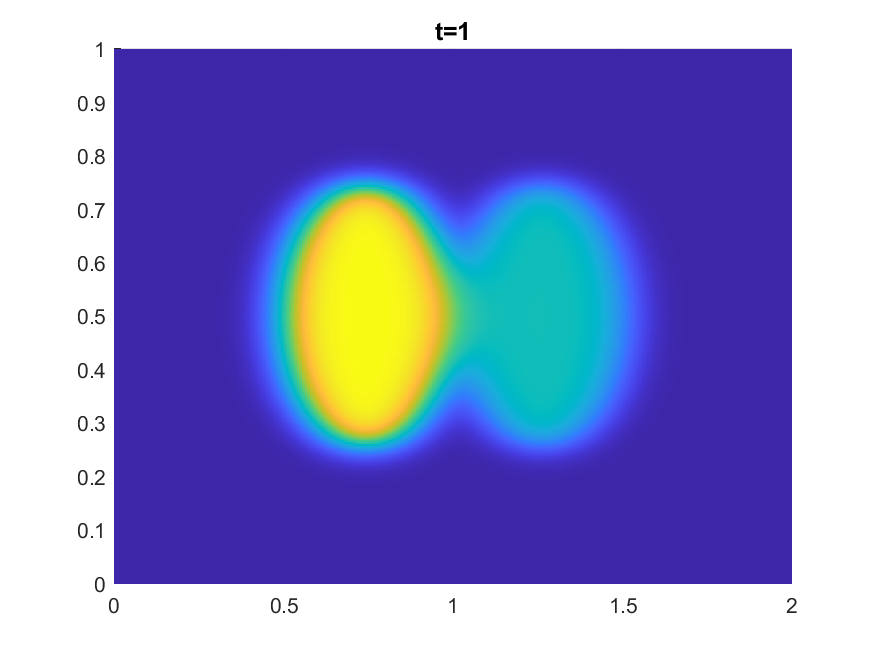}
		\includegraphics[width=0.24\textwidth]{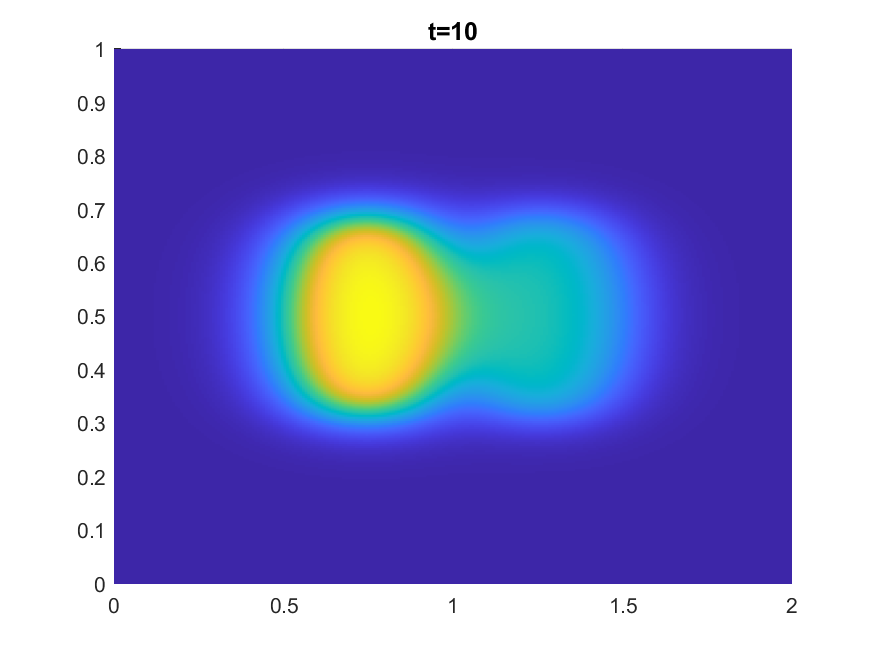}
		\includegraphics[width=0.24\textwidth]{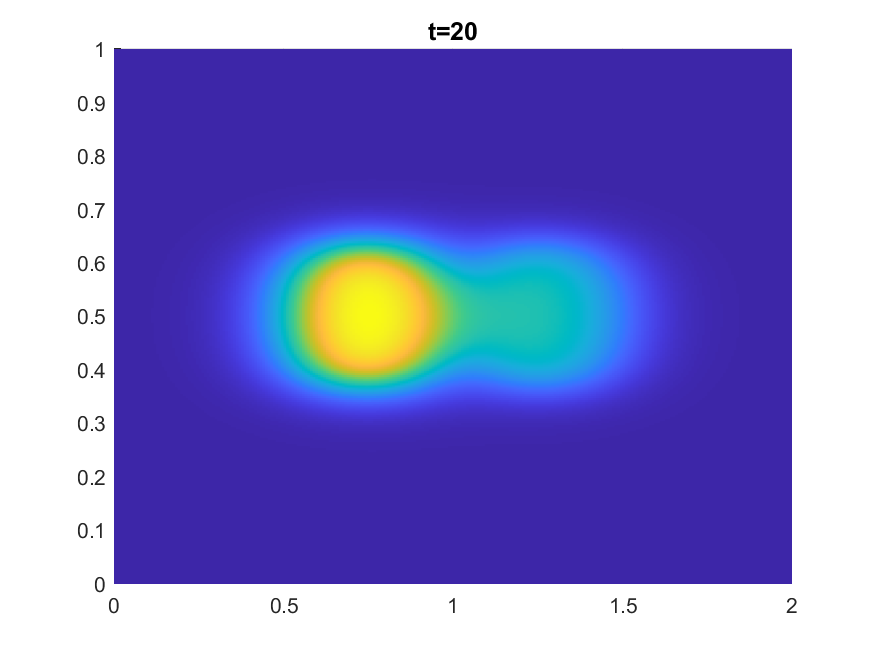}
		\includegraphics[width=0.24\textwidth]{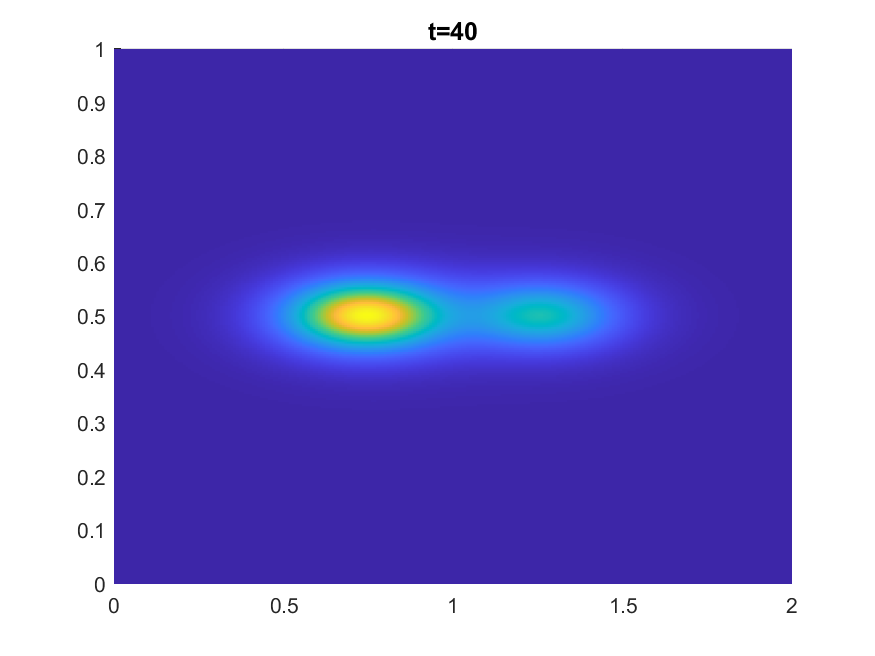}
		\caption{\label{F:Ex3_evolution} Vector-valued AC equations: Evolutions of the profile $u_1 + 2u_2$ at $t = 1, 10, 20, 40$ (from left to right) with different time step sizes; Top: $\tau = 0.01$; bottom: $\tau = 0.1$.}
	\end{figure}
	
	\section{Conclusions} 
	This paper is devoted to a class of IMEX RRK schemes based on the SAV method for solving the phase-field gradient flow models. Our core idea is to achieve high-order accuracy and balance accuracy with efficiency using the IMEX RK method, while combining the SAV method and the relaxation technique to stabilize nonlinear terms and guarantee unconditional energy dissipation. We rigorously prove that the scheme is unconditionally energy-stable and achieve the same order as the IMEX RK method. Numerical experiments validate our theoretical results. Furthermore, the scheme is extended to multi-component gradient flow. Using the vector-valued Allen-Cahn equations as an example, we demonstrate that the accuracy of the approximate solutions and the feasibility of phase separation evolution. In future work, we plan to extend our idea to other phase-field models, such as \cite{Wang2025GLOBAL,ZhangWang2024second}, thereby further broadening the scope of our research.

	\appendix
	\section*{Appendix A} 
	Based on references \cite{Kennedy2003Additive,Pareschi2005Implicit}, we present the coefficients of the IMEX RRK methods utilized in the numerical tests, noting that these coefficients are also applicable to the IMEX RK methods.
	
	\begin{itemize}
		\item IMEX RRK(3,2): 
		\[
		\qquad \qquad \qquad \qquad \text{Implicit method} \qquad \qquad \qquad \qquad \qquad \text{Explicit method} \qquad \qquad
		\]
		\[
		\renewcommand\arraystretch{1.2}
		\begin{array}{c|ccc}
			1-\sqrt{2}/2 & 1-\sqrt{2}/2 & 0 & 0 \\
			\sqrt{2}/2 & \sqrt{2}-1 & 1-\sqrt{2}/2 & 0 \\
			1/2 & \sqrt{2}/2-1/2 & 0 & 1-\sqrt{2}/2 \\
			\hline
			& 1/6 & 1/6 & 2/3
		\end{array}
		\qquad
		\renewcommand\arraystretch{1.2}
		\begin{array}{c|ccc}
			0 & 0 & 0 & 0 \\
			1 & 1 & 0 & 0 \\
			1/2 & 1/4 & 1/4 & 0 \\
			\hline
			& 1/6 & 1/6 & 2/3
		\end{array}
		\]
		\item IMEX RRK(4,3):
		\[
		\qquad \qquad \qquad \qquad \text{Implicit method} \qquad \qquad \qquad \qquad \qquad \text{Explicit method} \qquad \qquad
		\]	
		\[
		\renewcommand\arraystretch{1.2}
		\begin{array}
			{c|cccc}
			\alpha & \alpha & 0 & 0 & 0\\
			0 & -\alpha & \alpha & 0 & 0\\
			1 & 0 & 1-\alpha &  \alpha & 0\\
			1/2 & \beta & \eta &  1/2-\alpha-\beta-\eta & \alpha\\
			\hline
			& 0 & 1/6 & 1/6 & 2/3
		\end{array}	
		\qquad
		\renewcommand\arraystretch{1.2}
		\begin{array}
			{c|cccc}
			0 & 0 & 0 & 0 & 0\\
			0 & 0 & 0 & 0 & 0\\
			1 & 0 & 1 & 0 & 0\\
			1/2 & 0 & 1/4 & 1/4 &  0\\
			\hline
			& 0 & 1/6 & 1/6 & 2/3
		\end{array}	
		\]
		where the parameters $\alpha$, $\beta$ and $\eta$ take the values $0.24169426078821$, $0.06042356519705$ and $0.12915286960590$, respectively.	
		\item IMEX RRK(6,4):
		\[
		\qquad \qquad \qquad \qquad \qquad \text{Implicit method} \qquad \qquad \qquad \qquad \qquad
		\]
		\[
		\renewcommand\arraystretch{1.2}
		\begin{array}
			{c|cccccc}
			0 & 0 & 0 & 0 & 0 & 0 & 0\\
			\frac12 & \frac14 & \frac14 & 0 & 0 & 0 & 0\\
			\frac{83}{250} & \frac{8611}{62500} & \frac{1743}{31250} & \frac{1}{4} & 0 & 0 & 0\\
			\frac{31}{50} & \frac{5012029}{34652500} & \frac{-654441}{2922500} & \frac{174375}{388108} & \frac14 & 0 & 0\\
			\frac{17}{20} & \frac{15267082809}{155376265600} & \frac{-71443401}{120774400} & \frac{730878875}{902184768} & \frac{2285395}{8070912} & \frac14 & 0 \\
			1 & \frac{82889}{524892} & 0 & \frac{15625}{83664} & \frac{69875}{102672} & \frac{-2260}{8211} & \frac14 \\
			\hline
			& \frac{82889}{524892} & 0 & \frac{15625}{83664} & \frac{69875}{102672} & \frac{-2260}{8211} & \frac14
		\end{array}	
		\]
		\[
		\qquad \qquad \qquad \qquad \qquad \text{Explicit method} \qquad \qquad \qquad \qquad \qquad
		\]
		\[
		\renewcommand\arraystretch{1.2}
		\begin{array}
			{c|cccccc}
			0 & 0 & 0 & 0 & 0 & 0 & 0\\
			\frac12 & \frac12 & 0 & 0 & 0 & 0 & 0\\
			\frac{83}{250} & \frac{13861}{62500} & \frac{6889}{62500} & 0 & 0 & 0 & 0\\
			\frac{31}{50} & \frac{-116923316275}{2393684061468} & \frac{-2731218467317}{15368042101831} & \frac{9408046702089}{11113171139209} & 0 & 0 & 0\\
			\frac{17}{20} & \frac{-451086348788}{2902428689909} & \frac{-2682348792572}{7519795681897} & \frac{12662868775082}{11960479115383} & \frac{3355817975965}{11060851509271} & 0 & 0 \\
			1 & \frac{647845179188}{3216320057751} & \frac{73281519250}{8382639484533} & \frac{552539513391}{3454668386233} & \frac{3354512671639}{8306763924573} & \frac{4040}{17871} & 0 \\
			\hline
			& \frac{82889}{524892} & 0 & \frac{15625}{83664} & \frac{69875}{102672} & \frac{-2260}{8211} & \frac14
		\end{array}	
		\]
	\end{itemize}
	
	\bibliographystyle{abbrv}
	\bibliography{ref_imexrrk}
	
\end{document}